\pgfplotsset{compat=1.14}
\title{Once-reinforced random walk in high dimensions}
\date{}
\newtheorem{thm}{Theorem}
\newtheorem*{thm*}{Theorem}
\newtheorem{lem}{Lemma}[section]  
\newtheorem{prop}[lem]{Proposition}
\newtheorem{cor}[lem]{Corollary}
\newtheorem{definition}[lem]{Definition}
\newtheorem{claim}[lem]{Claim}
\newtheorem*{h1}{Assumption (H1): Transition probabilities}
\newtheorem*{h2}{Assumption (H2): Demon relaxed times}
\newtheorem*{definition*}{Definition}
\theoremstyle{remark}
\newtheorem{remark}[lem]{Remark}
\newcommand{\PP}{\mathbb{P}}
\newcommand{\EE}{\mathbb{E}}
\newcommand{\ZZ}{\mathbb{Z}}
\newcommand{\NN}{\mathbb{N}}
\newcommand{\var}{\mathrm{Var}}
\newcommand{\eps}{\varepsilon}
\DeclareMathOperator{\capt}{Cap}
\DeclareMathOperator{\Rel}{Rel}
\DeclareMathOperator{\len}{len}
\DeclareMathOperator{\DemonRel}{Rel_{\mspace{-1mu}\demon\mspace{-3mu}}}
\newcommand{\demonint}[1][1]{% default scale = 1
  \mathord{\smash{%
    \begin{tikzpicture}[baseline=(c.base), scale=#1]
      \node (c) at (0,0) {\strut};
      \draw (0.0,0.0) circle [radius=0.13];
      \draw (-0.13,0.05) arc [start angle = 225, end angle = 160, radius = 3pt];
      \draw (-0.08,0.11) arc [start angle = 250, end angle = 230, radius = 0.3];
      \draw (0.13,0.05)  arc [start angle = 315, end angle = 380, radius = 3pt];
      \draw (0.08,0.11)  arc [start angle = 290, end angle = 310, radius = 0.3];
    \end{tikzpicture}%
  }}%
}
\newcommand\demon{{\mathbin{%
  \mathchoice{\demonint}{\demonint}%
    {\demonint[0.65]}%
    {\demonint[0.4]}}}}
\newcommand{\W}{\mathchoice{W_{\mspace{-4mu}\demonint[0.65]\mspace{-3mu}}}
  {W_{\mspace{-4mu}\demonint[0.65]\mspace{-3mu}}}
  {W_{\mspace{-4mu}\raisebox{-1.5pt}{$\demonint[0.4]$}\mspace{-3mu}}}
  {W_{\mspace{-4mu}\demonint[0.4]\mspace{-3mu}}}}
\newcommand{\restrict}[2]{\left.{#1}\vphantom{\big|}\right|_{#2}}
\newcommand{\spend}{\tau_{\rm spend}}
\newcommand{\inner}{\tau_{\rm spend}(R)}
\newcommand{\ED}{\Omega_{\demon}}
\definecolor{myblue}{rgb}{0.09,0.32,0.44} %22-84-113
\numberwithin{equation}{section}
\author{Dor Elboim}
\author{Gady Kozma}
\begin{document}

\begin{abstract}
    We study the once-reinforced random walk on $\mathbb Z^d$, which is a self-interacting walk that has a higher probability to cross edges that were already visited. We prove that the walk is transient when $d\ge 6$ and when the reinforcement is small, establishing a conjecture of Sidoravicius in these dimensions. Moreover, in this case we prove that the walk behaves diffusively and can be coupled with Brownian motion.

    One of the main ideas in the proof is a certain capacity estimate which shows that the trajectory of the walk is nowhere heavy. We also use a game-theoretic-type ingredient that we call ``the demon" to force spatial independence in the process.
\end{abstract}

\maketitle

\section{Introduction}

Once-reinforced random walk (ORRW) is perhaps the simplest self-interacting random walk one can imagine. The walker only remembers edges it visited before and has some preference for them. Edges never visited are given weight 1, while visited edges are given weight $1+a$ for some $a>0$ (see \S\ref{sec:prelim} for details). Defined already in 1990 \cite{davis1990reinforced}, it has proved to be much harder to analyze than anticipated. Further, it came as a great surprise when  Sidoravicius conjectured (circa 2010) that on $\mathbb Z^d$ with $d\ge 3$ it undergoes a \emph{phase transition} in the parameter $a$: When $a$ is small the walk is transient, while when $a$ is large the walk is recurrent (we will describe the conjecture in more details below). 

In this paper we prove the first part of this conjecture when $d\ge 6$. Moreover, we prove that the walk behaves diffusively and converges to a Brownian motion.

\begin{thm}\label{thm:1}
    Suppose that $d\ge 6$ and let $a>0$ sufficiently small. Then once-reinforced walk $W$ on $\mathbb Z ^d$ with reinforcement $a$ is transient. Moreover, there is a constant $\sigma \in [\frac{0.9}{\sqrt{d}},\frac{1.1}{\sqrt{d}}]$ depending on $a$ and $d$ such that 
    \begin{equation*}
        \Big\{ \frac{W(st)}{\sqrt t}\Big\}_{s\ge 0} \to \{\sigma B(s)\}_{s\ge 0}, \quad\textrm{as } t\to \infty, 
    \end{equation*}
    where $\{B(s)\}_{s\ge 0}$ is a standard Brownian motion in $\mathbb R^d$. Here the convergence is weak convergence in $C[0,T]$ for all $T>0$.
\end{thm}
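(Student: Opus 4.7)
The plan is to couple the once-reinforced walk $W$ with a simple random walk $S$ on $\mathbb Z^d$ and transfer both transience and the invariance principle from $S$ to $W$. The natural coupling has $W$ and $S$ take the same step unless $W$ currently stands at a vertex incident to a previously traversed edge, at which point the reinforcement biases $W$'s next step by a factor $1+a$. Each such conflict contributes $O(a)$ to the total variation between the two increments, so for small $a$ the discrepancy $W-S$ should be a centered small perturbation, yielding $\sigma^2 = 1/d + O(a)$ in agreement with the stated interval around $d^{-1/2}$. The problem is to make this precise when the density of conflicts could, a priori, grow with time.

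Controlling the conflict density is the role of the capacity estimate announced in the abstract: along the trajectory of $W$, the local time at any vertex and, more generally, the capacity of the visited set should behave as for SRW. In $d\ge 6$ the analogous statement for SRW is classical --- intersections of independent copies are summable with slack and the range's capacity grows linearly --- and the plan is to establish a corresponding statement for $W$ by a bootstrap induction in time: assume nowhere-heaviness up to time $n$, use it to verify that the coupling with $S$ remains good up to time $n$, and combine the coupling with SRW estimates to extend nowhere-heaviness past $n$. The hypothesis $d\ge 6$ enters precisely here, furnishing the slack needed to absorb logarithmic and polynomial losses introduced at each inductive step; the transience then follows from $|S(n)|\to\infty$ together with the summability of the coupling error, and the functional CLT follows by writing $W(n)=S(n)+D(n)$ with $D$ a centered process of variance $O(a)\cdot n$.

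The chief obstacle is that $W$ is far from Markov: the law of the next step depends on the full set of visited edges, which destroys the spatial independence every perturbative or concentration argument requires. The ``demon'' is introduced to restore it. It is an auxiliary process that reveals visited-edge information only at prescribed relaxation times $\DemonRel$ (cf.\ assumptions (H1)--(H2)); between relaxation times the walk can be analyzed as an honest Markov chain whose transition probabilities are dominated by those of SRW up to a uniform $O(a)$ correction. Designing the demon is delicate: it must reveal enough to stochastically dominate the true dependence, yet little enough that independence between well-separated spatial regions survives so the capacity and coupling estimates close. The step I expect to be genuinely hardest is the joint bootstrap --- the simultaneous propagation of the capacity/nowhere-heavy estimate, the validity of the SRW coupling, and the demon's relaxation schedule --- because a loss in any one ingredient feeds back into the other two, and the errors must remain summable over arbitrarily long time scales for the coupling to be strong enough to extract both transience and a non-degenerate diffusive scaling limit.
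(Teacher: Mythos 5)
There is a genuine gap, and it sits at the heart of your plan: the direct coupling of $W$ with a simple random walk $S$ cannot carry the weight you put on it. In the natural coupling the two walks disagree at each step with probability of order $a$, and --- crucially --- this happens at \emph{every} step, because at every time $t\ge 1$ the walker stands at a vertex incident to the edge it has just crossed, which is reinforced. So the decoupling rate is $\Theta(a)$ per unit time, the coupling error over time $n$ is of order $an\to\infty$, and it is not summable; transience cannot be deduced from ``$|S(n)|\to\infty$ plus summability of the coupling error''. Likewise, writing $W(n)=S(n)+D(n)$ with ``$D$ centered of variance $O(a)n$'' is unsubstantiated: $D$ is neither obviously centered nor uncorrelated with $S$ (the reinforcement biases the walk towards its own recent past), and even granting such bounds they would not yield a functional CLT or the existence of the limiting $\sigma$. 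In the paper the diffusive behaviour is \emph{not} obtained by comparison with SRW over long times; it is obtained by showing that, thanks to a high density of relaxed times, a walk of length $t_1+t_2$ can be coupled with the concatenation of two \emph{independent} ORRWs of lengths $t_1,t_2$ (Lemma~\ref{lem:concat}), which gives approximate additivity of the variance (Proposition~\ref{prop:var}, where $\sigma$ is constructed) and then a KMT/Zaitsev coupling of the concatenated independent blocks with Brownian motion (Lemma~\ref{lem:coupling}). Your sketch has no substitute for this concatenation step, and the perturbative coupling you propose cannot replace it.

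A second, related problem is your description of the demon. Revealing visited-edge information ``only at prescribed relaxation times'' does not make ORRW an honest Markov chain in between: the transition law depends on the full set of crossed edges regardless of what is revealed to an observer. In the paper the demon is something else entirely --- an adversary that chooses the entry edges into a box, so that the inductive assumption (H2) is robust to arbitrary outside behaviour; effective spatial independence between well-separated boxes is then extracted by a supermartingale product argument (Lemmas~\ref{lem:supermartingale} and~\ref{lem:product}), supplemented by the capacity-to-volume bound (Lemma~\ref{lem:Cap-Vol}) and a volume-exhaustion estimate. Your instinct that a joint bootstrap in time propagating a nowhere-heavy estimate is needed matches the paper's structure, but the specific constraint forcing $d\ge 6$ is not generic ``slack'': it is the loss in passing from the capacity bound $\lesssim r^2$ to a volume bound $\lesssim r^{2d/(d-2)}$, which must still be small enough ($<r^{d-2}$) to be unhittable, and this fails for $d=5$.
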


We believe that the approach in this paper provides a more robust way to analyze self-interacting walks in high dimension, and that it applies to other models. But for concreteness we focus on ORRW.

\subsection{Sidoravicius' conjecture}
To understand the conjecture better, let us first consider the case that the time $t$ is fixed and $a$ is allowed to vary. It is straightforward to check that if $a\ll 1/t$ then the reinforcement does not affect the process at all and the process behaves like simple random walk. If $a\gg t$ then the opposite occurs, and the walk gets stuck on the first edge for the entire time of the process. 

More interesting behaviour occurs if $t^{0.99}\ll a\ll t$. In this case the walk visits new edges quite rarely, and spends the time between any two such events walking on the set of already visited edges. Further, between any two `new edge' events it has time to completely mix on the set of visited edges. Hence it is not difficult to see that the set of visited edges behave like an aggregate that grows by the following process. At every step choose an edge with at least one vertex in the aggregate uniformly and add it to the aggregate. This is very similar to the growth of a metric ball in \emph{first passage percolation} with exponential edge weights (only the rate of addition of edges with both end points in the aggregate differs, the behaviour of the vertex set is identical). This is also called the Eden model \cite{eden1961}.

Thus Sidoravicius' conjecture states that a similar behaviour happens when $a$ is fixed and $t$ tends to infinity, in $d\ge 3$. When $a$ is fixed and small, the walk should look like simple random walk (and, of course, this is the result of this paper, for $d\ge 6$). When $a$ is fixed and large the set of visited edges should look like a growing aggregate (not necessarily with the same shape as the first passage percolation ball). A heuristic renormalisation argument that we learned from Vincent Beffara says that at time $t$ the radius of the ball should be approximately $t^{1/(d+1)}$ and that a typical point in it should have been visited approximately $t^{1/(d+1)}$ times. Beffara's habilitation \cite{beffara2011} is also the first place the conjecture appeared in print. See \cite{kozma2012reinforced,dembo2021averaging} for some simulation results. 

Several simplified models have been suggested in order to study the `growing aggregate' phase of once-reinforced random walk. One is excited to the center, in which, whenever the walker encounters a new vertex it has a drift in the direction of its starting point. In this model the $t^{1/(d+1)}$ law can be understood via a (still heuristic) comparison of drifts argument. Another growth model defined by the exits of random walk is \emph{internal DLA}, see \cite{lawler1992idla,asselah2013idla,sheffield2014idla} see also the related \cite{lucas2020idla}. For internal DLA (and the model of \cite{lucas2020idla}) it is easy to see that the radius of the ball is $t^{1/(d+1)}$, where $t$ is the total time spent by all the walkers used to construct the aggregate.

It is interesting to note that the most well-understood reinforcement scheme, \emph{linear reinforcement}, also has a phase transition, but the `compact' phase looks quite different. In this case the process is typically within \emph{constant} distance from its starting point. This kind of `time independent' behaviour is, of course, impossible for once-reinforced random walk. See \cite{tarres2015lrrw,angel2014lrrw,xiaolin2017lrrw} for the compact phase and \cite{spencer2010susy,disertori2015lrrw} for the random walk-like phase of this model.

\subsection{Previous work}

The one-dimensional case of once-reinforced random walk is very similar to the so-called \emph{excited random walk}. For this model see the original paper \cite{davis1990reinforced} as well as \cite{benjamini2003excited,basdevant2008excited}. Specifically about once-reinforced, see Pfaffelhuber and Stiefel \cite{pfaffelhuber2021range} who showed that the walk typically reaches distance $\sqrt{t}$, but does not satisfy the central limit theorem for any reinforcement. They also determined the scaling limit of its range.

A more difficult case (but which is still one-dimensional in character) is the ladder graph $\mathbb Z \times \{1,\dots ,d\}$. On this graph Sellke \cite{sellke2006recurrence} proved that the walk is recurrent when the reinforcement is small and Vervoort \cite{vervoort2000games} and Kious, Schapira and Singh \cite{kious2021once} proved that the walk is recurrent when the reinforcement is large. The problem is still open for intermediate reinforcement.

Durrett, Kesten and Limic \cite{durrett2002once} proved that the walk is transient on regular trees for any reinforcement. Collevecchio \cite{collevecchio2006transience} proved that the walk is transient on a supercritical Galton-Watson tree. Kious and Sidoravicius \cite{kious2018phase} studied the once-reinforced walk on certain trees of  polynomial growth. They showed that the walk exhibits a phase transition between recurrence and transience similar to the conjecture above on $\mathbb Z ^d$. Later, Collevecchio, Kious and Sidoravicius \cite{collevecchio2020branching} found an exact formula for the critical reinforcement for general trees, using a geometric quantity that they named the branching-ruin number.

Very recently, Collevecchio and Tarr\`es \cite{collevecchio2025once} studied the once-reinforced walk on $\mathbb Z^d$ and proved that the volume of its range at time $t$ is typically larger than $t^{d/(d+2)}$ for any reinforcement. They also proved that the walk is transient on non-amenable graphs when the reinforcement is small.

Dembo, Groisman, Huang and Sidoravicius \cite{dembo2021averaging} studied a closely related model that mimics the behavior of ORRW on $\mathbb Z ^d$ when the reinforcement is large. We refer the reader to this paper for additional discussion, open problems and simulations of ORRW.

\subsection{The method of proof}\label{sec:sketch}
The basic scheme for the proof of the theorem was introduced in the work of the first author and Sly on the interchange
process \cite{elboim2022infinite} and was later used in \cite{elboim2025diffusivity}
to analyze the mirror model. It is a multiscale induction, namely, one makes some assumptions on the random walk up to time $T$, proves that the same assumptions hold for some larger time, and concludes inductively that they hold for all time. Very roughly, the induction assumptions are that our process `looks like random walk'. We wish to use the fact that in high enough dimension, random walk, at a typical time, with positive probability does not intersect its past. We call such times \emph{relaxed}. Namely, $t$ is relaxed if the geometry of $W[0,t]$ is such that the probability that $W[0,t]\cap W[t+1,\infty)=\emptyset$, conditioned on $W[0,t]$, is bigger than some positive constant. Roughly speaking, the inductive assumptions say that once-reinforced random walk has many relaxed times. The exact inductive assumptions can be seen in \S~\ref{sec:assump}, but let us continue this heuristic discussion without all details, for now. 

The first steps of the proof make some conclusions from the existence of many relaxed times. In \S~\ref{sec:concat} we show that the inductive assumption allows to couple a process of length $2t$ to the \emph{concatenation} of two independent processes of length $t$ with small error. Then in \S~\ref{sec:coupling} we use the concatenation property to couple our process $W$ to a concatenation of many independent much shorter processes, and conclude from that that $W$ satisfies a \emph{central limit theorem}. This part uses \emph{Koml\'os-Major-Tusn\'ady (KMT) coupling}. 

Of course, to make the induction work we need to prove that there are many relaxed times for a longer walk. Let us now make the definition of relaxed time more explicit (but still not with all details). We say that $t$ is relaxed if, when one looks at a cube $\Lambda$ of side length $r$ centered at the current position of the walker, $W(t)$, we have that $\Lambda \cap W[0,t]$ is quite small --- for \emph{simple random walk} we expect an analog quantity to be $r^2$, and we wish to show something close for once-reinforced random walk.

The proof uses two completely different arguments, one to handle relatively large $r$ and one to handle small $r$. For large $r$, we use a capacity argument.
Assume, for the sake of simplicity, that the capacity is defined by
\begin{equation}\label{eq:into-cap}
\capt(A)=\sum_{x\in A}\mathbb{P}^{x}(W\cap A=\emptyset)
\end{equation}
i.e.\ the sum over all $x\in A$ of the probability that once-reinforced
random walk started from $x$ never hits the set $A$ (of course,
except at time 0). The actual definition of capacity is a little more
complicated (see \S~\ref{sec:heavy}). The argument then claims that every time the capacity of $W$ increases, 
there is also a probability at least proportional to $\capt(W[0,t+1])-\capt(W[0,t])$ to escape the past. This means that the
event of increasing the capacity cannot happen too often before the
process detaches from its past. We perform this calculation \emph{locally}
and get that in a box $\Lambda$ of size $r$ we have ${\rm Cap}(W\cap\Lambda)\le r^{2}\log^{6}r$
with high probability, even for $W$ much longer than $r$. See Lemma~\ref{lem:small-capacity} for the details.

Once the capacity has been estimated we translate this to a volume
estimate. It is well-known for the usual capacity (defined as in
(\ref{eq:into-cap}) but for simple random walk rather than for once-reinforced walk) that
\[
c|A|^{1-2/d}\le\capt(A)\le|A|
\]
for every $A$. We repeat the classic proof (of the first inequality) for once-reinforced walk and get a local volume estimate. The reason our proof needs 6 dimensions is due to the 
inefficiency of passing from capacity to volume. In 5 dimensions, from an estimate of the form $\capt(A)\lesssim r^2$ we can only get a volume estimate of $|A|\lesssim r^{10/3}$, and sets of this size might be hittable by a second random walk. To overcome this we would have needed to rewrite the induction scheme using hitting probabilities rather than volume, which would have been less geometrically intuitive.

The argument used for smaller $r$ requires a stronger
inductive assumption. We make the assumption that once-reinforced
random walk has good behaviour inside a given box
$\Lambda$ even when its entry points into $\Lambda$ are decided by an
\emph{adversary} (we nickname our adversary `the demon' and denote
it by $\demon$, and we denote the process described by this adversary
by $\W$). The reason for this is to make different blocks \emph{effectively independent} in the following sense. The demon's optimal strategy, when it is confronted with several blocks, is to use the optimal strategy in each block, making them independent. Since once-reinforced random walk \emph{outside} the blocks is also a demon, its behaviour is bounded by that of the optimal demon. We get that, with high probability,
$\W$ has many relaxed times in almost all boxes $\Lambda$. See \S~\ref{sec:demons} for details. (The actual proof does not mention `optimal strategies', instead a supermartingale argument is used, see \S~\ref{sec:super})

Finally, for the remaining boxes (which are few and very small) we
use a \emph{volume exhaustion argument}. Since random walk in a box
$\Lambda$ of size $r$ has positive probability to exit the box after
$r^{2}$ steps (regardless of its starting position), the environment,
no matter how complicated, cannot `force' the walk to stay in the
box without `wasting' at least $r$ edges. After $r^{d-1}$ such attempts,
it runs out of steam. See \cite{amir2008excited} for a similar use of a volume
exhaustion argument (not as part of a multiscale induction scheme). 

Let us stress once more that the demon argument is only needed for smaller $r$. Even though the induction assumptions in \S~\ref{sec:assump} are formulated for the demon process $\W$, the first step in \S~\ref{sec:concat} is to drop the demon and show that the induction assumption holds for the process $W$ in `virgin environment'. The proof then proceeds through \S\S~\ref{sec:concat}-\ref{sec:coupling} studying the walk $W$, returning to $\W$ in \S\S~\ref{sec:heavy}-\ref{sec:demons}.

Thus, the properties of once-reinforced random walk we used
are
\begin{enumerate}
\item That it is invariant to translations and lattice rotations;
\item That it is a perturbation of random walk for short time;
\item The fact that if it does not intersect its past, it is independent
of it;
\item And the fact that it does not get stuck in a box much more than its volume,
regardless of its starting configuration.
\end{enumerate}

\subsection{Comparison with \texorpdfstring{\cite{elboim2022infinite}}{Elboim and Sly, 2022}}
The proofs in this paper and in \cite{elboim2022infinite} share the main structure but have several important differences. An important common element is the definition of a \emph{relaxed time}, explained in the previous section. Further, the exact requirement is that \emph{90\% of the times are relaxed with super-polynomially high probability}. See (\ref{eq:spend}) here and \cite[\S 3]{elboim2022infinite}. In both papers, this requirement is used to show that a process that has run for time $t+s$ can be coupled to the \emph{concatenation} of two independent processes running for time $t$ and $s$. From the concatenation property we deduce the central limit theorem as explained in the previous section.

The key difference between the two proofs is the replacement of the \emph{escape algorithm}, which is the main idea of \cite{elboim2022infinite}, with the capacity argument explained in the previous section. The escape algorithm (see \cite[\S 5]{elboim2022infinite}) is used to show that the process never gets `stuck', but rather always has a reasonable probability to get away from its past and continue independently. It uses inherently the fact the the interchange process studied in \cite{elboim2022infinite} is a \emph{loop system}, and hence it cannot be used here. It was replaced wholesale with the capacity argument, which fills a similar role.

Another place where the fact that our process is not a loop system made a difference is in the `independence between blocks' part of the argument. In \cite{elboim2022infinite}, when you consider the interchange process as a loop system, different blocks are \emph{truly independent}. Here no such independence is available. As explained in the previous section, we introduced the demon to make different blocks effectively independent. 

To implement these changes the inductive assumptions and the entire proof in \cite{elboim2022infinite} have to be completely reconstructed.

Finally, let us note a relatively technical point related to the volume exhaustion argument. Here the volume exhaustion says `once you have crossed all edges in a box, any further entry into it is identical to simple random walk'. This fact holds for any value of $a$, our perturbation parameter, uniformly in $a$. In \cite{elboim2022infinite} the perturbation parameter (denoted in \cite{elboim2022infinite} by $\beta$) affects the volume exhaustion. Increasing this parameter makes the volume exhaustion argument worse off, leading to some technical difficulties in initiating the induction. Here the induction base is trivial. We believe that the techniques in this paper can give an alternative proof for the interchange process, but did not want to encumber this paper with the technicalities necessary for the induction base.

\subsection{Acknowledgments}
We thank Hugo Duminil-Copin and Omer Angel for helping us with an earlier version of Lemma~\ref{lem:Cap-Vol}. We thank Ron Peretz and Eilon Solan for their game-theoretic perspective on the demons and on Lemma~\ref{lem:product}. Finally, we thank Amir Dembo, Allan Sly, Sourav Chatterjee, Amol Aggarwal and Dan Mikulincer for fruitful discussions about once-reinforced random walk and strong versions of KMT.

\subsection{Preliminaries}\label{sec:prelim}
We start with a formal definition of ORRW. Let $G$ be a graph and let $a>0$. For a subset of edges $\gamma$ (think of them as `reinforced edges'), a vertex $u$ and a neighbour $v\sim u$, define the transition probability from $u$ to $v$ by
\begin{equation}\label{eq:a}
P_{\gamma,a}(u,v)=
\frac{1+a\mathds{1}\{\{u,v\}\in\gamma\}}{\sum_{w\sim u}\big( 1+a\mathds{1}\{\{u,w\}\in\gamma\}\big) } .
\end{equation}
ORRW is then defined as the process $W:\mathbb N\to V$ with $\PP(W(t+1)=v\mid W(0),\dots ,W(t))=P_{W[0,t],a}(W(t),v)$, where $W[0,t]$ is the set of edges crossed by $W$.

It will sometimes be convenient to write the step as a function of a number $U\in[0,1)$. In this case we order the neighbours of $u$ with some fixed order, $v_1 ,\dotsc,v_{\deg u}$, let $p_1,\dotsc,p_{\deg u}$ be the probabilities for the $\deg u$ neighbours and take 
\begin{equation}\label{eq:with U}
W(t+1)=v_i\iff U\in\big[\textstyle{\sum_{j< i}p_j,\sum_{j\le i}p_j}\big).
\end{equation}
If $U$ is uniform on $[0,1)$ then this gives the same probabilities.

For a box $\Lambda \subseteq \mathbb Z ^d$  we denote by $\partial \Lambda $ the set of edges with one vertex in $\Lambda $ and one vertex in $\mathbb Z ^d\setminus \Lambda $. We denote by $\overline{\Lambda }$ the union of $\Lambda $ with all vertices with a neighbor in $\Lambda $.

A path $\gamma$ in $\mathbb Z^d$ is a function $\gamma:\{0,\dotsc,n\}\to \mathbb Z^d$ such that $\gamma(i)\sim\gamma(i+1)$ for all $i$, where $u\sim v$ means that $u$ and $v$ are nearest neighbors in $\mathbb Z^d$. We denote $\len\gamma\coloneqq n$. For a box $\Lambda \subseteq \mathbb Z^d$ we say that a $\gamma:\{0,\dotsc,n\}\to\overline{\Lambda }$ is a teleporting path (sometimes shortened to teleporter) if it is a path in $\overline{\Lambda }$ which is allowed to have two consecutive vertices in $\overline{\Lambda }\setminus \Lambda $ even if they are not neighbours, but not allowed to have three consecutive vertices in $\overline{\Lambda }\setminus \Lambda $. Namely, when the path reaches $\overline{\Lambda }\setminus \Lambda $ it can either jump to its neighbor in $\Lambda $ or `teleport' to any point in $\overline{\Lambda }\setminus \Lambda$ from which it then enters $\Lambda $. 

For a path or a teleporter $\gamma$, we denote by $\gamma[a,b]$ the set $\{\gamma(t):\forall t\in [a,b]\}$. (Sometimes we will also denote by $\gamma[a,b]$ the set of edges crossed by $\gamma$, namely $\{(\gamma(t),\gamma(t+1)\}$ for a path and $\{ (\gamma(t),\gamma(t+1)) : \gamma(t)\in\Lambda \textrm{ or }  \gamma(t+1)\in\Lambda\}$ for a teleporter --- we hope no confusion will arise from this use).

For a box $\Lambda \subseteq \mathbb Z^d$ we define $\restrict{\gamma}{\Lambda}$ to be the teleporter one gets by restricting $\gamma$ to the box $\Lambda$ (including the entries and exits from it) and `shortening time' to ignore other steps. Formally, we define
\[
  t_{i+1}=\begin{cases}
    t_i+1 & \gamma(t_i)\in\Lambda\\
    \min \{t>t_i:\gamma(t)\in \Lambda\textrm{ or }\gamma(t+1)\in \Lambda\}&\textrm{otherwise.}
  \end{cases}
\]
Similarly, $t_0$ is defined as $0$ if $\gamma(0)\in\Lambda$, and otherwise as $\min\{t\ge 0:\gamma(t+1)\in\Lambda\}$. Finally, let $\restrict{\gamma}{\Lambda}(i)=\gamma(t_i)$. It is easy to check that if $\gamma$ is a path then $\restrict{\gamma}{\Lambda}$ is a teleporter, and similarly if $\gamma$ is a teleporter in a box strictly larger than $\Lambda$.

When we write $\|\cdot\|$ we mean the $l_2$ norm (this will usually not matter, and when it does, we will use $\|\cdot\|_2$ instead). When we write $d(v,A)$ for a vertex $v$ and a set of vertices $A$ we mean $\min\{\|v-a\|_2:a\in A\}$ and similarly for $d(A,B)$ for two sets of vertices.

We use $C$ and $c$ for constants which depend only on the dimension and independent of any other parameter (such as $a$ and $T$). Their values may change from line to line, and even within the same line. We use $C$ for constants which are large enough and $c$ for constants which are small enough. Similarly, constants implicit in statements such as `$a$ is sufficiently small' may depend on the dimension. Very often we will have expressions like $Ce^{-cT}$, where $T$ is some parameter sufficiently large --- in these cases we will omit the first $C$ and just write $e^{-cT}$, which we are allowed to because it will hold for a smaller value of $c$ and sufficiently large $T$. We use $A\approx B$ as a short for $c\le A/B\le C$.

\section{Setting up the induction}

Throughout the paper we fix the exponents
\begin{equation*}
    \kappa :=3.5, \quad \nu := 0.01, \quad \epsilon := \frac{1}{10^4d} ,\quad \delta := \frac{1}{10^5d^2}.
\end{equation*}

See Remark~\ref{rem:exponents} below regarding the choice of these exponents and their meaning. The constants $C,c>0$ might depend on $\kappa ,\nu ,\epsilon ,\delta $ which are explicit functions of $d$.

\subsection{Demons}\label{sec:demon}
Let $\Lambda \subseteq \mathbb Z ^d$ be a box. Let us start with an intuitive description of a demon and later give the precise definition. A demon living outside $\Lambda $ repeatedly chooses a starting edge $e\in \partial \Lambda $ from which a once-reinforced walk enters $\Lambda $ and evolves in the environment created by previous excursions until it exits $\Lambda $. The demon can see the past excursions when making its decision of the next starting edge. Only for the first step it may choose either an edge in $\partial \Lambda $ or an arbitrary point in $\Lambda$. It can use external randomisation.

We proceed with the rigorous definition. Let $\mathscr{P}(\Lambda ) $ be the set of teleporting paths (of any finite length) in $\Lambda$. Let $\Omega$ be a probability space (this space will be referred to as the `external randomisation'). A demon $\demon$ is a function $\demon:\mathscr{P}(\Lambda)\times\Omega\to\Lambda\cup\partial \Lambda$, i.e.\ the output of the demon can be either a vertex (in $\Lambda$) or an edge (in $\partial\Lambda$). There will be a restriction on the allowed values in the case that $\demon(\gamma)\in \Lambda$ which will be detailed below. 

Any demon $\demon$ defines a \emph{demon random walk} $\W$ (which is a random teleporter in $\Lambda$) as follows. If $\demon(\emptyset)\in\Lambda$ let $\W(0)=\demon(\emptyset)$ (here $\emptyset$ is the empty path). Otherwise, $\demon(\emptyset)=\{x,y\}$ with $x\notin \Lambda $ and $y\in \Lambda $ and we set $\W(0)=x$ and $\W(1)=y$. 

To continue the definition of $\W$, if $\W(t)\in \Lambda$ we let $\W(t+1)$ be a random neighbour of $\W(t)$ chosen with $\PP(\W(t+1)=x \mid \W(0),\dots ,\W(t))=P_{\W[0,t],a}(\W(t),x)$ where $P$ is from \eqref{eq:a}. The random choice is independent of all previous choices, as well as from the randomness in the probability space $\Omega$ of the demon. 
If $\W(t)\notin\Lambda$ we apply the demon. If $\demon(\W[0,t])\eqqcolon x\in \Lambda$ then it must be the case that $x\sim \W(t)$ (this is the restriction alluded to above) and we set $\W(t+1)=x$. Otherwise $\demon(\W[0,t])\eqqcolon\{x,y\}$ with $x\notin \Lambda $ and $y\in \Lambda $ and we set $\W(t+1)=x$ and $\W(t+2)=y$.

In the literature, such a process (i.e.\ a single player playing vs.\ randomness) is called a Markov Decision Process (MDP). We refer the reader to the book by Puterman~\cite{puterman1990markov} on MDPs and in particular to the definition and basic properties in \cite[\S~2.1]{puterman1990markov}.

We will sometimes refer to $W$ as `ORRW in virgin environment' when we want to stress the difference from $\W$.

\subsection{Heavy blocks and relaxed times}

We will use the following key definitions in the inductive assumptions and throughout the paper. 

\begin{definition}[Heavy blocks]
 A block $\Lambda =u+[-r,r]^d$ is called heavy with respect to a walk $W$ at time $t$ if 
\begin{equation*}
    \big| \{W(s):s\le t\} \cap \Lambda  \big| \ge r^{\kappa}.
\end{equation*}
\end{definition}

\begin{definition}[Relaxed times]\label{def:relaxed}
    A time $t\ge 0$ is called $R$-locally relaxed (with respect to the walk $W\!$) if for any $r\in [a^{-1/3},R]$ the block $W(t)+[-r,r]^d$ is not heavy at time $t$. (Recall that $a$ is the reinforcement parameter).

    We denote by $\Rel (R) \subseteq \mathbb N$ the set of $R$-locally relaxed times. We define $\DemonRel(R)$ in the same way as $\Rel(R)$ but for $\W$ instead of $W$.
\end{definition}

\subsection{The inductive assumptions}\label{sec:assump}
The inductive assumptions at time $T$ are:

\begin{h1}
\label{h1}
For any $v\in \mathbb Z ^d$ we have that 
\begin{equation*}
    \mathbb P (W(T)=v)\le\min \big( T^{-d/2+\nu },\|v\|^{-d+2\nu} \big).
\end{equation*}
\end{h1}

\begin{h2}
\label{h2}
Let $R=\lfloor \sqrt{T} /2 \rfloor $. The following holds for any demon $\demon$ outside of $[-4R,4R]^d$. Let $\W$ be the demon walk and define the stopping time of spending time $R^2 \log ^3R$ inside the inner box $[-R,R]^d$: 
\begin{equation}\label{eq:spend}
    \inner :=\min \big\{ t\ge 0 : |\{s\le t: \W(s)\in [-R,R]^d\}| \ge R^2 \log ^3R\big\}.
\end{equation}
We have that
\begin{equation*}
     \mathbb P \bigg( \!\! \begin{array}{cc}
        \forall t\le \inner -R^{\epsilon } \text{ such that }\W(t)\in [-R,R]^d   \\
          \text{ we have }\big|(t,t+R^{\epsilon })\cap \, {\DemonRel} (R) 
 \big| \ge  0.9R^\epsilon +1 
    \end{array}\!\! \bigg) \ge 1-e^{-\log ^2T}.
\end{equation*}
\end{h2}

\begin{remark}\label{rem:exponents}
  Let us explain the choice of exponents $\kappa ,\nu ,\epsilon ,\delta $. It is important that the heaviness exponent $\kappa$ is smaller than $d-2$, this will ensure the walk typically does not hit its history. We also need $\kappa $ to be larger than $2d/(d-2)$ which is the volume bound we obtain from the capacity estimate. This explains the restriction $d\ge 6$ in Theorem~\ref{thm:1}. The exponent $\nu >0$ is the buffer we have in Assumption~\hyperref[h1]{\bf{(H1)}} on transition probabilities. It is important to have $\nu \ll \kappa -2d/(d-2)$ in order to justify the capacity estimate and also $\nu \ll d-2-\kappa $ in order to make sure the walk does not hit its history. Next, $R^\epsilon$ is the scale of time in which we ensure relaxed times, which is roughly the size of the error of concatenation. It is useful to have $\epsilon \ll \nu $ in order to prove Assumption~\hyperref[h1]{\bf{(H1)}} using Assumption~\hyperref[h2]{\bf{(H2)}} at a shorter time (see Lemma \ref{lem:H1}).

  Finally, the parameter $\delta$ has not appeared yet explicitly, but we alluded to it in the proof sketch (\S\ref{sec:sketch}). The threshold between the scale where we use the capacity estimate and where we use the demon estimate is $R^\delta$. It is important that $\delta \ll \epsilon $ in order to prove Assumption~\hyperref[h2]{\bf{(H2)}}. See the end of the proof of Theorem \ref{thm:relaxed} on page \pageref{pg:del_eps}.
\end{remark}

\subsection{The induction base and step}

Let $T_0$ be the minimal integer after $3600^{1/\epsilon}$ such that for all $T\ge T_0$ and $v\in \mathbb Z ^d$ we have 
\begin{equation*}
    \mathbb P (X(T)=v) \le \tfrac{1}{2} \min \big( T^{-d/2+\nu }, \|v\|^{-d+2\nu } \big),
\end{equation*}
where $X$ is a simple random walk. The local central limit theorem (see, e.g., \cite[Theorem~2.11]{lawler2010random}) promises that such a $T_0$ exists and depends only on $d$. The following lemma is the induction base.

\begin{lem}\label{lem:base}
    The inductive assumptions \hyperref[h1]{\bf{(H1)}} and~\hyperref[h2]{\bf{(H2)}} hold at any time $T\in [T_0, a^{-1/(2d)} ]$ if $a$ is sufficiently small.
\end{lem}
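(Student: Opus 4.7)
The plan is to verify Assumptions \hyperref[h1]{\bf{(H1)}} and \hyperref[h2]{\bf{(H2)}} separately. In both cases I would exploit the fact that for $T\le a^{-1/(2d)}$ the reinforcement has had essentially no time to act, so the problem reduces to one about simple random walk plus an error term that is negligibly small.

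For \hyperref[h1]{\bf{(H1)}}, I would couple $W$ with a simple random walk $X$ started at the origin via the maximal coupling, applied step by step. A direct calculation from \eqref{eq:a} shows that, irrespective of the history, the total variation distance between $P_{W[0,t],a}(W(t),\cdot)$ and the uniform distribution on the $2d$ neighbours is at most $a/2$: each of the $2d$ transition probabilities deviates from $1/(2d)$ by at most $a/(2d)$. Concatenating the maximal couplings at successive steps yields $\PP(W(s)=X(s)\text{ for all }s\le T)\ge 1-CTa$. Since the walk is nearest-neighbour, $\PP(W(T)=v)=0$ unless $\|v\|\le T$; for such $v$, the LCLT bound (the defining property of $T_0$) combined with the coupling gives
\begin{equation*}
  \PP(W(T)=v)\le \tfrac12\min\!\big(T^{-d/2+\nu},\,\|v\|^{-d+2\nu}\big)+CTa.
\end{equation*}
It remains to verify $CTa\le \tfrac12\min(T^{-d/2+\nu},\|v\|^{-d+2\nu})$. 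The minimum is bounded below by $T^{-d+2\nu}\ge a^{1/2-\nu/d}$ (using $\|v\|\le T\le a^{-1/(2d)}$), while $CTa\le Ca^{1-1/(2d)}$; the resulting ratio $Ca^{1/2-1/(2d)+\nu/d}$ has positive exponent whenever $d\ge 6$, so the inequality holds for $a$ sufficiently small. The case in which $T^{-d/2+\nu}$ is the binding term is analogous and strictly easier.

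For \hyperref[h2]{\bf{(H2)}} the observation is simpler: the interval $[a^{-1/3},R]$ in Definition~\ref{def:relaxed} is empty throughout this regime. Indeed $R=\lfloor\sqrt T/2\rfloor\le \tfrac12 a^{-1/(4d)}$, and $\tfrac1{4d}\le\tfrac1{24}<\tfrac13$ for $d\ge 6$, so $R<a^{-1/3}$ once $a$ is small. Hence every $s\in\mathbb N$ is vacuously $R$-locally relaxed, and $\DemonRel(R)=\mathbb N$ regardless of the choice of demon $\demon$. Consequently, for every $t$ one has
\begin{equation*}
  |(t,t+R^\epsilon)\cap \DemonRel(R)|=\lfloor R^\epsilon\rfloor\ge 0.9\,R^\epsilon+1,
\end{equation*}
the last inequality being $R^\epsilon\ge 20$, which follows from $T\ge T_0\ge 3600^{1/\epsilon}$ (giving $R^\epsilon\ge(\sqrt{T_0}/3)^\epsilon\ge 60\cdot 3^{-\epsilon}\ge 20$). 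Therefore the probability in \hyperref[h2]{\bf{(H2)}} equals $1$.

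There is no genuine obstacle in this proof: the lemma is designed precisely as an induction base at scales so small that the reinforcement has not yet acted. The only thing requiring care is the exponent arithmetic, and the two key inequalities are $\tfrac12-\tfrac1{2d}+\tfrac\nu d>0$ (to dominate the coupling error in \hyperref[h1]{\bf{(H1)}}) and $\tfrac1{4d}<\tfrac13$ (for $[a^{-1/3},R]$ to be empty in \hyperref[h2]{\bf{(H2)}}), both of which rely crucially on $d\ge 6$.
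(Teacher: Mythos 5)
Your proof is correct and follows the same route as the paper's: for \hyperref[h2]{\bf{(H2)}} observe that $[a^{-1/3},R]$ is empty so every time is vacuously $R$-locally relaxed, and for \hyperref[h1]{\bf{(H1)}} use the step-by-step coupling with simple random walk and absorb the $O(Ta)$ coupling error into the factor-of-$\frac12$ slack built into the definition of $T_0$. (The paper does the bookkeeping slightly more compactly by bounding $CaT\le\frac12 T^{-d}\le\frac12\min(T^{-d/2+\nu},\|v\|^{-d+2\nu})$ directly, but your two-case exponent check amounts to the same thing.) One small inaccuracy in your closing remark: neither $\tfrac12-\tfrac1{2d}+\tfrac{\nu}{d}>0$ nor $\tfrac1{4d}<\tfrac13$ actually relies on $d\ge 6$ --- both hold for every $d\ge 1$. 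The constraint $d\ge6$ enters the paper elsewhere (in passing from capacity to volume, cf.\ Remark~\ref{rem:exponents}), not in the induction base.
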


\begin{proof}
Assumption~\hyperref[h2]{\bf{(H2)}} on relaxed times holds trivially when $T\in [T_0,a^{-1/(2d)}]$. Indeed, by Definition~\ref{def:relaxed} of relaxed times, any time $t$ is $R$-locally relaxed when $R=\lfloor \sqrt{T}/2 \rfloor \le a^{-1/3}$, since the interval $[a^{-1/3},R]$ is empty. Thus, if $T\in [T_0,a^{-1/(2d)}]$ then $\big| (t,t+R^{\epsilon })\cap {\DemonRel}(R)\big| \ge R^{\epsilon }-1 \ge 0.9R^{\epsilon }+1$, where the last inequality holds since $R= \lfloor \sqrt{T} /2\rfloor\ge \lfloor \sqrt{T_0}/2 \rfloor \ge  30^{1/\epsilon }$. It follows that the event in Assumption~\hyperref[h2]{\bf{(H2)}} holds deterministically.

We turn to prove Assumption~\hyperref[h1]{\bf{(H1)}} on transition probabilities when $T\in [T_0,a^{-1/(2d)}]$. It suffices to prove the assumption when $\|v\|\le T$ since otherwise $\mathbb P (W(T)=v)=0$. There is a coupling of $W$ with a simple random walk $X$ such that 
    \begin{equation*}
        \mathbb P \big( \forall t\le T, \ W(t)=X(t) \big) \ge 1-CaT \ge 1-\tfrac{1}{2}T^{-d},
    \end{equation*}
    where the last inequality holds since $T\le a^{-1/(2d)}$ and $a$ is sufficiently small. Thus, 
    \begin{equation*}
    \begin{split}
        \mathbb P (W(T)=v)&\le \mathbb P (X(T)=v) + \tfrac{1}{2}T^{-d} \\
        &\le \tfrac{1}{2}\min \big( T^{-d/2+\nu }, \|v\|^{-d+2\nu } \big)+ \tfrac{1}{2}T^{-d} \le  \min \big( T^{-d/2+\nu }, \|v\|^{-d+2\nu } \big),
    \end{split}
    \end{equation*}
    where the second inequality holds by the definition of $T_0$ and since $T\ge T_0$.
\end{proof}

The following theorem is the induction step.

\begin{thm}\label{thm:step}There exists some $a_0=a_0(d)$ such that for all $a<a_0$ the following holds. 
    Let $T\ge a^{-1/(2d)} $ and suppose that the inductive assumptions~\hyperref[h1]{\bf{(H1)}} and~\hyperref[h2]{\bf{(H2)}} hold at all times $T'\in [T_0,T)$. Then, \hyperref[h1]{\bf{(H1)}} and~\hyperref[h2]{\bf{(H2)}} hold at time $T$.
\end{thm}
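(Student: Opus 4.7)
The plan is to prove (H1) and (H2) at time $T$ using the inductive hypotheses at all $T'\in[T_0,T)$, leaning on the machinery developed in the later sections of the paper. The two assumptions are proved essentially independently: (H1) will follow from a coupling to Brownian motion via the concatenation technology, while (H2) requires a genuine new argument at scale $R=\lfloor\sqrt{T}/2\rfloor$ that splits into a capacity regime and a demon regime.

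For (H1), I would pick an intermediate scale $T'\in[T_0,T)$ with $T'=T/k$, where $k$ is some slowly growing function of $T$ (a small power of $\log T$ suffices). By the inductive (H2) at time $T'$, the concatenation machinery of Section~\ref{sec:concat} gives a coupling between $W[0,T]$ and the concatenation of $k$ independent ORRWs of length $T'$, with total error which is super-polynomially small in $T$. Feeding this into the KMT coupling of Section~\ref{sec:coupling} produces a Brownian motion $\sigma B$ with $\|W(T)-\sigma B(T)\|$ much smaller than $\sqrt T$ except on an event of probability $e^{-\log^2 T}$. The local CLT / heat-kernel bound for $B$ then gives $\PP(W(T)=v)\le T^{-d/2+\nu}/2+e^{-\log^2 T}$, which absorbs into the required $T^{-d/2+\nu}$; the tail bound $\|v\|^{-d+2\nu}$ for $\|v\|\gg \sqrt T$ comes from Gaussian concentration plus the same coupling error. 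The buffer $\epsilon\ll\nu$ makes the error term at the shorter time harmless.

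For (H2), fix a demon $\demon$ outside $[-4R,4R]^d$. A time $t$ fails to be relaxed only if $\W(t)+[-r,r]^d$ is heavy for some $r\in[a^{-1/3},R]$. I would split scales at $r=R^{\delta}$. For large scales $r\in[R^{\delta},R]$: use Lemma~\ref{lem:small-capacity} to show that $\capt(\W[0,\inner]\cap\Lambda)\le r^{2}\log^{6}r$ uniformly over boxes $\Lambda$ of side $r$ with probability $1-e^{-\log^2 T}$, and then convert to a volume bound via Lemma~\ref{lem:Cap-Vol}, obtaining $|\W[0,\inner]\cap\Lambda|\le r^{2d/(d-2)}\log^{C}r<r^{\kappa}$ because $\kappa=3.5>2d/(d-2)$ for $d\ge 6$. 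A union bound over dyadic scales and positions inside $[-R,R]^d$ is easily absorbed by the $e^{-\log^2 T}$ budget. For small scales $r\in[a^{-1/3},R^{\delta}]$: restrict $\W$ to each block $\Lambda$ of side $r$; this restriction is itself a demon process at the smaller scale, and the inductive (H2) applied at time $T'\approx r^{2}$ gives the required many-relaxed-times property within each block. The effective independence across blocks comes from the supermartingale construction of Section~\ref{sec:super} (replacing what would be true independence in a loop-system setting), and for the small number of rare blocks where the demon forces an anomalous concentration, the volume exhaustion argument applies: a box of side $r$ has only $O(r^{d})$ edges, so after $O(r^{d+1})$ forced visits the local environment saturates and further visits look like simple random walk uniformly in $a$, precluding heaviness beyond the required threshold.

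The main obstacle will be the small-scale step for (H2): propagating the super-polynomial probability bound $e^{-\log^2 T}$ across the many blocks and scales simultaneously while keeping the demon's adversarial choices under control. Concretely, one must show that the bad event aggregated over all boxes of side $r$ meeting $[-R,R]^d$ and all $r\in[a^{-1/3},R^{\delta}]$ still has probability at most $e^{-\log^2 T}$, despite the demon being allowed to correlate its behaviour across blocks. This is exactly where the inequality $\delta\ll\epsilon$ from Remark~\ref{rem:exponents} is used: it gives enough slack between the scale at which we apply the inductive (H2) and the scale at which we need the conclusion, so that the union bound over blocks is overwhelmed by the stretched-exponential probability supplied by the induction. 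Getting the volume exhaustion to interface cleanly with the demon supermartingale, and then with the large-scale capacity argument at the seam $r\approx R^{\delta}$ without a double loss, is the real technical heart of the induction step.
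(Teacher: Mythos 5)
The overall architecture of your proposal --- concatenation plus KMT for (H1), a split at scale $R^{\delta}$ between a capacity argument and a demon argument for (H2) --- is indeed the skeleton of the paper's proof. But there is a genuine gap in your (H1) step, and the (H2) sketch misattributes the role of volume exhaustion.

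\textbf{The gap in (H1).} You propose to couple $W(T)$ with $\sigma B(T)$ via concatenation and KMT, and then read off the pointwise bound $\PP(W(T)=v)\le T^{-d/2+\nu}$ from the ``heat-kernel bound for $B$.'' This cannot work. The coupling error in Lemma~\ref{lem:coupling} is of order $T^{1/3+4\epsilon}$ in space, not of order $1$, so the best you can extract is
\[
\PP(W(T)=v)\le \PP\big(\|\sigma B(T)-v\|\le T^{1/3+4\epsilon}\big)+e^{-c\log^2T}\approx T^{-d/2+d/3+4d\epsilon},
\]
which is enormously larger than $T^{-d/2+\nu}$ (recall $\nu=0.01$). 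A Brownian coupling simply cannot resolve a single lattice site. The paper's Lemma~\ref{lem:H1} avoids this by using an \emph{asymmetric} concatenation $T=t_1+t_2$ with $t_2=\lfloor T^{4/5}\rfloor$ and convolving: it applies the inductive \textbf{(H1)} at the shorter time $t_2$ to get the pointwise factor $\PP(W_2(t_2)=w)\le t_2^{-d/2+\nu}$, and applies the Brownian approximation only to the long piece $W_1$ to bound the \emph{box} probability $\PP(\|W_1(t_1)-u\|\le R)\le CR^d T^{-d/2}$ with $R=T^{2/5+3\epsilon}$; multiplying recovers $T^{-d/2+O(\nu)}$. Your equal-piece split $T'=T/k$ with slowly growing $k$ discards exactly the ingredient --- the inductive pointwise bound at a shorter time --- that makes this calculation close. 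You should also note that you need \textbf{(H1)} at shorter times already in the preparatory machinery (Lemma~\ref{lem:A} feeding into Lemmas~\ref{lem:avoiding} and~\ref{lem:concat}), so it is not true that (H1) at $T$ follows from (H2) at smaller times alone.

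\textbf{Minor issues in (H2).} Your scale split at $r=R^{\delta}$ is correct, as is the capacity-to-volume conversion at the large scales (Lemmas~\ref{lem:small-capacity} and~\ref{lem:Cap-Vol}). But in the paper the demon/supermartingale argument operates on a \emph{single} fixed block scale $r=\lceil R^{\delta}\rceil$, and the inductive \textbf{(H2)} applied inside each such block simultaneously controls all scales $[a^{-1/3},r]$; you don't need to run the argument over a continuum of small scales. Also, volume exhaustion (Lemma~\ref{lem:Volume exhaustion}) is not used to ``preclude heaviness''; heaviness is already handled by the capacity argument. Its role is to bound the \emph{total time} the walk can spend at any vertex (hence in any bad block $\Lambda\in\mathcal A\cup\mathcal B$), which is needed because the capacity argument controls the trace but not the occupation time. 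Separately, the supermartingale bounds the number of bad blocks $|\mathcal B|$, and a distinct escape-probability argument (Lemma~\ref{lem:mathcal A}) bounds $|\mathcal A\setminus\mathcal B|$; your sketch blurs these three distinct bounds into one mechanism, and this is precisely where the bookkeeping is delicate.
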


The rest of the paper is devoted to the proof of Theorem~\ref{thm:step} and therefore from now on we fix $T\ge a^{-1/(2d)}$ and assume that \hyperref[h1]{\bf{(H1)}} and~\hyperref[h2]{\bf{(H2)}} hold at all times $T'\in [T_0,T)$, and that $a$ is sufficiently small.

Our main result (Theorem~\ref{thm:1}) is an easy consequence of Theorem~\ref{thm:step} and some of the lemmas used in its proof, so we postpone it to page \pageref{pg:proof1}.

\section{Concatenation}\label{sec:concat}

We start by showing that Assumption~\hyperref[h2]{\bf{(H2)}} on many relaxed times for a demon walk implies many relaxed times for the usual once-reinforced walk.

\begin{lem}\label{lem:abstract}
Let $W$ be once-reinforced random walk and let $\Lambda=u+[-r,r]^d$ for some $u\in\ZZ^d$ and some $r>0$. Then, there exists a demon $\demon$ outside of $\Lambda $ such that $\restrict{W}{\Lambda}\sim \W$. 
\end{lem}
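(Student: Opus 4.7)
The plan is to construct $W$ and the demon walk $\W$ on a single probability space so that $\W=\restrict{W}{\Lambda}$ pointwise, and to read off the demon from the construction. To this end, I split the random input driving $W$ via~\eqref{eq:with U} into two independent families of i.i.d.\ uniform $[0,1]$ variables: an inside family $(\xi^{\mathrm{in}}_k)_{k\ge 1}$, with $\xi^{\mathrm{in}}_k$ used for the $k$-th step of $W$ taken from a vertex in $\Lambda$, and an outside family $\omega=(V^j_i)_{j,i\ge 1}$, with $V^j_i$ used for the $i$-th step of the $j$-th excursion of $W$ outside $\Lambda$. Take $(\xi^{\mathrm{in}}_k)$ as the intrinsic randomness for $\W$ and take the external probability space $\Omega$ of the demon to be the space carrying $\omega$.

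Define $\demon(\gamma,\omega)$ as a deterministic \emph{replay} of the outside behavior of $W$. Given $\gamma\in\mathscr{P}(\Lambda)$ with $\gamma(n)\notin\Lambda$, parse $\gamma$ to count the number $j$ of outside excursions it contains (equivalently, the number of indices $s$ at which $\gamma(s-1)\in\Lambda$ and $\gamma(s)\notin\Lambda$, which is unambiguous because each excursion contributes exactly one such exit step regardless of whether the subsequent return is immediate or via a teleport) and to read off the exit vertex of each excursion. Then, for $i=1,\ldots,j$ in order, run a simulation of ORRW on $\ZZ^d$ starting from the $i$-th exit vertex, driven by $(V^i_1,V^i_2,\ldots)$ via~\eqref{eq:with U}, with reinforcement equal to the union of the edge-set of $\gamma$ and of all edges visited by the simulations of excursions $1,\ldots,i-1$; stop the simulation upon first re-entering $\Lambda$. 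The demon's output is the outcome of the $j$-th simulation: either a vertex $y\in\Lambda$ adjacent to $\gamma(n)$ (immediate re-entry) or an edge $\{x,y\}\in\partial\Lambda$ (teleport). The empty-history case $\demon(\emptyset)$ is handled in the same spirit, using the fixed starting vertex of $W$.

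Since the same family $(V^j_i)$ drives both the actual excursions of $W$ and the demon's replays, each replay reproduces the corresponding actual excursion step-for-step, including the evolving reinforcement. Hence the demon's output at every query equals the next transition of $\restrict{W}{\Lambda}$, and $\W=\restrict{W}{\Lambda}$ on this joint space. It remains to check the two pieces of the demon-walk definition. First, whenever $\W(t)\in\Lambda$, the transition $\W(t+1)$ should be an ORRW step with reinforcement set $\W[0,t]$: this holds because every edge incident to a vertex $v\in\Lambda$ lies in $\Lambda\cup\partial\Lambda$ and is recorded in $\W[0,t]$ with its correct reinforcement status --- this is precisely why the edge-set of a restricted teleporter is defined to include the $\partial\Lambda$ edges --- so~\eqref{eq:a} evaluated on $\W[0,t]$ agrees with its evaluation on the full edge-history of $W$. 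Second, $\omega$ is independent of $(\xi^{\mathrm{in}}_k)$ by construction, which supplies the product structure required by the MDP definition of \S\ref{sec:demon}. The main bookkeeping point, and the only place where one has to be slightly careful, is to parse $\gamma$ correctly into its excursions given that $\restrict{W}{\Lambda}$ may contribute either one or two outside vertices per excursion; but, as noted, each excursion is unambiguously marked by its exit step in $\gamma$, so the parsing, and hence $\demon$, is a measurable function of $(\gamma,\omega)$.
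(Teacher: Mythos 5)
Your overall strategy is the same as the paper's: reserve the randomness used outside $\Lambda$ as the demon's external randomisation $\Omega$, define $\demon$ by re-simulating the outside behaviour of $W$ from that randomness, and then couple so that $\W=\restrict{W}{\Lambda}$ pointwise. However, there is a concrete gap in your definition of the replay. When you re-simulate excursion $i<j$, you give it reinforcement equal to the edge-set of the \emph{entire} queried teleporter $\gamma$ (plus earlier replays). But $\gamma$ is the full restricted history up to the current query, so its edge-set may contain $\partial\Lambda$ edges that were crossed only \emph{after} excursion $i$ took place. The actual excursion $i$ of $W$ saw those edges as unreinforced; your replay sees them as reinforced. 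At a vertex of $\overline{\Lambda}\setminus\Lambda$ adjacent to such an edge, the transition probabilities in \eqref{eq:a} differ in the two situations, so the same uniform variable $V^i_\cdot$ can produce a different step: the replay need not ``reproduce the corresponding actual excursion step-for-step,'' which is exactly the claim your coupling rests on. Once a replay of some excursion $i<j$ diverges, the accumulated outside edge-set fed into the $j$-th replay is wrong, so the demon's output can differ from the true re-entry point of $W$, and the induction giving $\W=\restrict{W}{\Lambda}$ (hence the equality in law for \emph{your} demon) breaks down.

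The fix is small: when replaying excursion $i$, use as reinforcement only the prefix of $\gamma$ up to (and including) the exit step that begins excursion $i$, together with the outside edges of the replays $1,\dots,i-1$; then an induction over excursions shows each replay coincides with the actual excursion and your argument goes through, becoming essentially the paper's proof. The paper sidesteps the issue structurally: it indexes the outside randomness by (vertex, number of visits) --- an envelope/stack representation --- and defines $\demon(\gamma,\omega)$ via a single auxiliary process $X$ that follows $\gamma$ inside $\Lambda$ and uses the outside envelopes when outside, reconstructed in chronological order, so the reinforcement seen at every step of the reconstruction is automatically the correct, time-consistent one. Your remarks about parsing excursions, about boundary edges being recorded in the restricted teleporter (which is what makes the inside steps correct), and about the independence of the inside and outside randomness are all fine.
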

(As usual, $\sim$ means `have the same distribution').
\begin{proof}This is `probabilistic abstract nonsense', but let us do it in detail nonetheless.
We use an `envelope' representation of ORRW --- this means that we imagine at every vertex an infinite stack of envelopes, and when the walker arrives at the vertex it opens the top envelope, reads the instruction where to go, and discards the envelope. Formally, for each $v\in\ZZ^d$ we define infinitely many random variables $(U_{v,n})_{n=1}^\infty$, i.i.d.\ (also independent between sites), uniform on $[0,1]$. We then define W(t) as follows. Assume at time $t$ we have that $W(t)=v$ and $|\{s\le t:W(s)=v\}|=n$. We then use $U_{v,n}$ to sample the next step according to \eqref{eq:with U}. Clearly, this is the same process as usual ORRW.

Recall that we need to define a space of external randomisation $\Omega$. We let $\Omega$ be the space realizing the variables $(U_{v,n}:n\in\NN, v\not\in\Lambda)$. If $0\in\Lambda$ we let $\demon(\emptyset)=0$. Otherwise we let $\demon(\emptyset)$ be the first edge through which $W$ enters $\Lambda$ (which is, of course, a function of $\Omega$).

For every $\omega\in\Omega$ and for every teleporter $\gamma\in\mathscr{P}(\Lambda)$ we define $\demon(\gamma,\omega)$ as follows. We let $X$ be the process that uses the variables $(U_{v,n}:n\in\NN, v\not\in\Lambda)$ to sample the next step when $X(t)\notin \Lambda $, and follows $\gamma$ inside $\Lambda$, if possible (for example, if $\gamma$ starts from a certain $\{x,y\}\in\partial\Lambda$ but $X$ first entered $\Lambda$ from some edge different from $\{x,y\}$, then it is not possible for $X$ to follow $\gamma$). If it is possible for $X$ to follow $\gamma$, i.e.\ for some $t$ we have that $\restrict{X[0,t]}{\Lambda}=\gamma$, we define $\demon(\gamma,\omega)$ as the first entry point of $X$ into $\Lambda$ after $t$. Otherwise, we give $\demon(\gamma,\omega)$ an arbitrary fictitious value (this will not affect $\W$). Similarly, if $\gamma\ne\emptyset$ and $\gamma(\len\gamma)\in\Lambda$ then we set $\demon(\gamma,\omega)$ to a fictitious value.

As for what `entry point' means, if $X$ returns to $\Lambda$ in one step, we let $\demon(\gamma)$ be the vertex in $\Lambda$ through which it returned. If $X$ takes more than one step to return to $\Lambda$, we let $\demon(\gamma)$ be the edge through which it returned. This finishes the definition of $\demon$.

It is now easy to check that, if $\W$ uses the same $U_{v,n}$ as $W$ to sample its steps in $\Lambda $, then $\W = \restrict{W}{\Lambda}$. This coupling proves the equality of distributions.
\end{proof}

\begin{lem}[Virgin relaxed times]\label{lem:virgin relaxed}
Let $W$ be ORRW starting from the origin. Then for all $t\le T$ we have that 
\begin{equation*}
    \mathbb P \big( \forall s\le t,\   \big| (s,s+t^{\epsilon })\cap \Rel(\infty) \big| \ge 0.8t^{\epsilon } \big) \ge 1-Ce^{-c\log ^2t}.
\end{equation*}
\end{lem}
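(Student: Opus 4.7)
The plan is to combine Lemma~\ref{lem:abstract} (restriction-to-a-box produces a demon walk) with the inductive assumption~\hyperref[h2]{\bf{(H2)}} via a tiling argument. Set $R=\lfloor\sqrt{t}/2\rfloor$ and $T'=4R^2\le t$, adjusted slightly if $t=T$ to ensure $T'<T$, so that \hyperref[h2]{\bf{(H2)}} applies at $T'$ by the inductive hypothesis. For each grid point $u\in\lfloor R/2\rfloor\ZZ^d$ lying within $\ell^\infty$-distance $2t$ of the origin (there are $O(t^{d/2})$ such), let $\Lambda_u=u+[-4R,4R]^d$ with inner region $u+[-R,R]^d$, and apply Lemma~\ref{lem:abstract} to couple $\restrict{W}{\Lambda_u}$ with a demon walk $\W_u$ in $\Lambda_u$. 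Translation invariance of ORRW lets \hyperref[h2]{\bf{(H2)}} apply to $\W_u$, yielding an event $E_u$ of probability at least $1-e^{-\log^2 T'}$ on which every window $(\tau',\tau'+R^\epsilon)$ starting at a time $\tau'$ with $\W_u(\tau')\in u+[-R,R]^d$ contains at least $0.9R^\epsilon+1$ times in $\DemonRel(R)$. Union bounding over the $O(t^{d/2})$ boxes yields the required tail $Ce^{-c\log^2 t}$.

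Work on $\bigcap_u E_u$ and fix $s\le t$. Choose the grid point $u$ nearest to $W(s)$, so that $W(s)\in u+[-R/2,R/2]^d$. For $t$ large, $t^\epsilon\ll R$, so the walk stays inside $u+[-R,R]^d$ throughout $(s,s+t^\epsilon)$; while there, $\restrict{W}{\Lambda_u}$ skips no steps and restriction-time matches $W$-time up to a fixed shift. The stopping time $\inner$ in \hyperref[h2]{\bf{(H2)}} never activates because $R^2\log^3 R\gg t$, so $\W_u$ cannot accumulate $R^2\log^3 R$ steps inside the inner box during the entire trajectory. Partition $(s,s+t^\epsilon)$ into $N=\lfloor t^\epsilon/R^\epsilon\rfloor$ consecutive sub-windows of length $R^\epsilon$; each starts with the walk in the inner box, so $E_u$ contributes at least $0.9R^\epsilon+1$ times in $\DemonRel(R)$ per sub-window.

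The critical bookkeeping step is translating $\DemonRel(R)$ for $\W_u$ into $\Rel(R)$ for $W$. When $W(\tau)\in u+[-R,R]^d$ and $r\le R$, the block $W(\tau)+[-r,r]^d$ is contained in $\Lambda_u$, and therefore $W[0,\tau]\cap\mathrm{block}=\W_u[0,\tau']\cap\mathrm{block}$, where $\tau'$ is the restriction time of $\tau$. Consequently $\tau\in\Rel(R)$ iff $\tau'\in\DemonRel(R)$, and summing the sub-window contributions yields $|(s,s+t^\epsilon)\cap\Rel(R)|\ge 0.9\,t^\epsilon(1-o(1))\ge 0.85\,t^\epsilon$.

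To finish, I upgrade $\Rel(R)$ to $\Rel(\infty)$: for any $\tau\le t+t^\epsilon$ and any scale $r>R\ge(2t)^{1/\kappa}$ (which holds because $\kappa>2$ and $R=\sqrt{t}/2$), the block $W(\tau)+[-r,r]^d$ contains at most $\tau+1\le 2t<r^\kappa$ points of $W[0,\tau]$ and is automatically not heavy, so $\tau\in\Rel(R)$ implies $\tau\in\Rel(\infty)$ on the relevant time scale. Hence $|(s,s+t^\epsilon)\cap\Rel(\infty)|\ge 0.8\,t^\epsilon$ for every $s\le t$ on $\bigcap_u E_u$. The main obstacle will be the careful bookkeeping of the time parametrizations of $W$ versus $\W_u$ together with the spatial containment of the heaviness blocks inside $\Lambda_u$; once these identifications are set up, the grid union bound and the pasting of sub-windows are essentially routine.
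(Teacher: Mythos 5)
Your proposal is correct and follows essentially the same route as the paper: represent $\restrict{W}{u+[-4R,4R]^d}$ as a demon walk via Lemma~\ref{lem:abstract}, apply the inductive Assumption~\hyperref[h2]{\bf{(H2)}} at a time $\approx t$ (noting $\inner\ge R^2\log^3R\gg t$ so the stopping-time restriction is vacuous on the relevant time range), union bound over a grid of boxes covering the range of the walk, identify demon-relaxed with relaxed times via containment of the heaviness blocks in the box, paste $R^\epsilon$-windows into $t^\epsilon$-windows, and upgrade $\Rel(R)$ to $\Rel(\infty)$ by the trivial volume count at scales above $R$. The only cosmetic difference is that the paper takes $r=\lfloor\sqrt{t}/4\rfloor$ (so (H2) is invoked at time $4r^2<T$ without the adjustment you need at $t=T$) and uses a denser cover of $Ct^d$ boxes, neither of which changes the argument.
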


\begin{proof}
  We may assume that $t$ is sufficiently large (as usual, depending on $d$) and let $r=\lfloor \sqrt{t} /4\rfloor $. Let $u\in\mathbb{Z}^d$ and let $\Lambda=u+[-r,r]^d $ and $\Lambda ^+:=u+[-4r,4r]^d$. By Lemma~\ref{lem:abstract} there is a demon $\demon $ outside of $\Lambda ^+$ such that $\W\sim \restrict{W}{\Lambda^+}$. Thus, by Assumption~\hyperref[h2]{\bf{(H2)}} at time $4r^2<T$ and with the demon $\demon$ we have 
\begin{equation}\label{eq:56}
    \mathbb P \bigg( \!\!\!\! \begin{array}{cc}
        \forall s\le r^2\log ^2r \text{ such that } W(s)\in \Lambda    \\
          \text{ we have }\big|(s,s+r^{\epsilon })\cap \, \Rel(r) 
 \big| \ge  0.9r^{\epsilon }+1 
    \end{array}\!\!\! \bigg) \ge 1-e^{-\log ^2r}.
\end{equation}
We used here that the stopping time from Assumption~\hyperref[h2]{\bf{(H2)}} is always at least~$r^2\log ^3r$.

This is already quite close to what we need but we need to get rid of the restriction $W(s)\in\Lambda$. For this we note that up to time $t$ the walk will stay in $[-t,t]^d$. Cover $[-t,t]^d$ by $Ct^d$ blocks $\Lambda = u+[-r,r]^d$. Since $W(s)$ is always contained in one of these blocks, \eqref{eq:56} and a union bound over these blocks gives
\begin{equation*}
    \mathbb P \big( \forall s\le t,\   \big| (s,s+r^{\epsilon })\cap \Rel (r)   \big| \ge 0.9r^{\epsilon }+1 \big) \ge 1-Ct^de^{-\log ^2r}.
\end{equation*}
We now want to move from time intervals of length $r^\epsilon$ to intervals of length $t^\epsilon$. We note that, if the event inside the probability holds, then we can partition an interval of length $t^\epsilon $ into intervals of length $r^{\epsilon }$ with a small remainder (here we use that $t$ and hence $r$ are sufficiently large). This gives a high density of $r$-locally relaxed times within this interval. Blocks of side length larger than $r$ are clearly not heavy by time $t\le 16r^2$ and so all these times are actually in $\Rel(\infty)$ and not just in $\Rel(r)$.
\end{proof}

\subsection{Avoiding the history}

In the following lemma we show that after a relaxed time, the walk can be coupled perfectly with an independent walk with positive probability.

We wish to generate ORRW using a sequence $U_1,U_2,\dotsc$ of i.i.d.\ uniform variables in $[0,1)$. We use \eqref{eq:with U}, but note that here it will be important that the fixed order of neighbours that we used in \eqref{eq:with U} is translation invariant.

\begin{definition}\label{def:natural}
When we talk about the \emph{natural coupling} of two walks $W$ and $W'$, this simply means that we use the same sequence of uniform variables $U_1, U_2,\dots $ to sample both walks, and the same fixed, translation invariant order on the neighbours. 
\end{definition}
For example, in the natural coupling of a once-reinforced walk $W$ and a simple random walk $X$, in each step the walks go in the same direction with probability at least $1-Ca$. The next lemma uses the natural coupling of two ORRW, one in empty environment, and one with some edges already reinforced.

\begin{lem}\label{lem:avoiding}
Let $W$ be a once-reinforced walk and let $t\ge 1$. Let $W'$ be a once-reinforced walk starting from the origin that is independent of $W[0,t]$. In the natural coupling of $\{W(t+s)\}_{s\ge 0}$ and $\{W'(s)\}_{s\ge 0}$, on the event that $t\in\Rel(\infty)$ we have
\[
\PP\big( \forall s< T, \  W(t+s)=W(t)+W'(s) \mid W[0,t] \big) > 1-Ca^{1/9}.
\]
\end{lem}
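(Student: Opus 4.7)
The plan is to exploit the natural coupling of $\{W(t+s)\}_{s\ge 0}$ and $\{W(t)+W'(s)\}_{s\ge 0}$: both processes consult the same uniform variable $U_s$ at step $s$, and since the ordering of neighbours used in \eqref{eq:with U} is translation invariant, they take identical steps whenever the transition probabilities at their common translated position agree. Write $\tilde A$ for the set of edges $W[0,t]$ translated by $-W(t)$, and let $V(\tilde A)$ denote the set of endpoints of those edges. After $s$ steps of agreement, the reinforcement set of $W$ (translated by $-W(t)$) is $\tilde A \cup W'[0,s]$, while that of $W'$ is just $W'[0,s]$. The two transition distributions at the common vertex $W'(s)$ therefore differ only when $W'(s)\in V(\tilde A\setminus W'[0,s])$, and in that case they differ in total variation by at most $Ca$ (each reinforced neighbour perturbs a transition weight of total mass $\ge 2d$ by at most $a$).

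Letting $\tau$ denote the first step of disagreement, a union bound gives
\[
    \PP\big(\tau < T \mid W[0,t]\big) \le Ca \sum_{s=0}^{T-1} \PP\big(W'(s) \in V(\tilde A)\big) = Ca \sum_{z \in V(\tilde A)} G_T(z),
\]
where $G_T(z):=\sum_{s<T}\PP(W'(s)=z)$ is the truncated Green's function. Applying Assumption~\hyperref[h1]{\bf{(H1)}} to $W'$ (and handling $s<T_0$ by the trivial bound), then splitting the sum over $s$ at $s\sim\|z\|^2$, yields the standard estimates $G_T(0)\le C$ and $G_T(z)\le C\|z\|^{-d+2+2\nu}$ for $z\ne 0$.

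To finish I would feed in the relaxed hypothesis $t\in\Rel(\infty)$, which gives $|\tilde A\cap B(0,r)|\le Cr^\kappa$ and hence $|V(\tilde A)\cap B(0,r)|\le Cr^\kappa$ for every $r\ge a^{-1/3}$. A dyadic shell decomposition then bounds the contribution from $\|z\| \ge a^{-1/3}$ by $C\sum_{k\ge a^{-1/3}\text{ dyadic}} k^{\kappa-d+2+2\nu}$, a convergent geometric series of order $a^{(d-\kappa-2-2\nu)/3}$; convergence is precisely what the chosen exponents together with $d\ge 6$ guarantee, via $\kappa<d-2-2\nu$. For $\|z\|<a^{-1/3}$, where the relaxed condition is silent, I would fall back on the trivial shell bound $|V(\tilde A)\cap \{\|z\|=k\}|\le Ck^{d-1}$, giving a contribution of order $\sum_{k=1}^{a^{-1/3}}k^{1+2\nu}\le Ca^{-(2+2\nu)/3}$. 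Multiplying by the prefactor $Ca$ produces a bound on $\PP(\tau<T)$ of order $a^{(1-2\nu)/3}$, comfortably below $Ca^{1/9}$ for small $a$.

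The main point requiring care is thus the near-origin region, where the relaxed condition provides no information and one is forced to use the crude shell volume. The argument succeeds only because $G_T$ decays fast enough that this region contributes no more than $a^{-(2+2\nu)/3}$, which the $Ca$ prefactor absorbs with room to spare. Everything else is a routine union bound combined with the Green-function bookkeeping typical of high-dimensional intersection arguments.
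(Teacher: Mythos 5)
Your proof is correct, and it takes a somewhat different route from the paper's. The paper splits by \emph{time}: it defines two events, $\Omega_1=\{\forall s\le a^{-2/3},\ W(t+s)=W(t)+W'(s)\}$, bounded by a crude per-step total-variation union bound $\PP(\Omega_1^c)\le Ca\cdot a^{-2/3}=Ca^{1/3}$, and $\Omega_2=\{\forall s\in[a^{-2/3},T),\ W(t)+W'(s)\notin A_t\}$, bounded by invoking Lemma~\ref{lem:A} as a black box with $L=a^{-1/3}$, yielding $Ca^{1/9}$; on $\Omega_1\cap\Omega_2$ agreement propagates by induction. You instead run a single union bound over all $s<T$, pay $Ca$ per step only when $W'(s)$ is adjacent to the old history, and then split by \emph{space} $\|z\|$: for $\|z\|\ge a^{-1/3}$ the relaxed condition gives $|V(\tilde A)\cap B(0,r)|\le Cr^\kappa$, and for $\|z\|<a^{-1/3}$ you use the trivial shell volume. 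In effect you are re-deriving the content of Lemma~\ref{lem:A} but extending it down to $s=0$ by observing that the Green function decays fast enough for the trivial near-origin bound to be affordable; as a bonus this gives the sharper exponent $a^{(1-2\nu)/3}$ instead of $a^{1/9}$. Two small points worth flagging in a polished write-up: (i) for $\|z\|\lesssim T_0$ the bound $G_T(z)\le C\|z\|^{-d+2+2\nu}$ does not follow directly from \hyperref[h1]{\bf{(H1)}} (which only applies for times $\ge T_0$); one should bound $G_T(z)\le C$ there and note that this constant-size region contributes only $O(a)$ after the prefactor; (ii) the union bound step $\PP(\tau=s+1)\le Ca\,\PP(W'(s)\in V(\tilde A))$ should be stated via conditioning on $\{\tau>s\}$ and $W'[0,s]$, since the $Ca$ bound applies to the conditional disagreement probability; this is routine but is the one place where circularity could sneak in if phrased sloppily.
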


For the proof of Lemma~\ref{lem:avoiding} we will need the following lemma.

\begin{lem}\label{lem:A}
    Let $L\ge \sqrt{T_0}$ and let $A\subseteq \mathbb Z^d$ such that $|A\cap [-r,r]^d|\le r^{\kappa +\nu }$ for all $r\ge L$.  Then, we have
    \begin{equation*}
        \mathbb P \big( \exists  t\in [L^2,T), \  W(t) \in A \big) \le CL^{-1/3}.
    \end{equation*}
\end{lem}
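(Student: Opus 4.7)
The plan is a straightforward first-moment estimate using Assumption~\hyperref[h1]{\bf{(H1)}} combined with a dyadic decomposition of $A$. Since $L^2\ge T_0$, Assumption~\hyperref[h1]{\bf{(H1)}} applies at every $t\in[L^2,T)$, and by a union bound
\begin{equation*}
    \mathbb P\big(\exists t\in[L^2,T),\,W(t)\in A\big)\le \sum_{t=L^2}^{T-1}\sum_{v\in A}\mathbb P(W(t)=v)\le \sum_{t=L^2}^{T-1}\sum_{v\in A}\min\big(t^{-d/2+\nu},\,\|v\|^{-d+2\nu}\big).
\end{equation*}

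First I would estimate, for each fixed $v\in A$ with $\|v\|=r$, the inner sum over $t\ge L^2$ by splitting at the crossover scale $t=r^2$. If $r\ge L$, the spatial bound $r^{-d+2\nu}$ applied over $t\in[L^2,r^2]$ contributes at most $Cr^{2-d+2\nu}$, and the temporal bound summed over $t\ge r^2$ gives a convergent series (using $d/2-\nu>1$ for $d\ge 6$) of the same order $Cr^{2-d+2\nu}$. If $r<L$ then only $t\ge L^2>r^2$ is active and the temporal bound alone yields $CL^{2-d+2\nu}$. (If $r^2>T$ we simply do not split; the spatial bound times $T\le r^2$ still gives $Cr^{2-d+2\nu}$.)

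Next I would sum over $v\in A$ using the hypothesis on $A$. The inner region $\|v\|_\infty<L$ satisfies $|A\cap[-L,L]^d|\le L^{\kappa+\nu}$ by the hypothesis with $r=L$, contributing at most $CL^{\kappa+\nu}\cdot L^{2-d+2\nu}=CL^{\kappa-d+2+3\nu}$. For each dyadic shell $A_k:=A\cap\{2^k\le\|v\|_\infty<2^{k+1}\}$ with $k\ge\lceil\log_2 L\rceil$, the hypothesis with $r=2^{k+1}$ gives $|A_k|\le (2^{k+1})^{\kappa+\nu}$, yielding contribution $C\cdot 2^{k(\kappa-d+2+3\nu)}$. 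With $\kappa=3.5$, $\nu=0.01$ and $d\ge 6$, the exponent $\kappa-d+2+3\nu\le -0.47<-1/3$, so the geometric series over $k$ is dominated by its first term, and the whole sum is $O(L^{\kappa-d+2+3\nu})=O(L^{-1/3})$.

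The main obstacle is conceptual rather than technical: one must recognise that the volume hypothesis on $A$ is precisely balanced against the transition bound of~\hyperref[h1]{\bf{(H1)}}, so that the sum over $v$ barely converges at the borderline dimension $d=6$ --- this is exactly why the constraint $\kappa<d-2$ appears in Remark~\ref{rem:exponents}. All remaining steps --- the split at $t=r^2$, the convergence $d/2-\nu>1$, and the final exponent arithmetic --- are routine.
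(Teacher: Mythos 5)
Your proof is correct and is essentially the paper's argument: both evaluate $\sum_{t}\sum_{v\in A}\min\big(t^{-d/2+\nu},\|v\|^{-d+2\nu}\big)$ via Assumption~\hyperref[h1]{\bf{(H1)}} and the dyadic volume hypothesis on $A$; you simply swap the order of summation (for each fixed $v$ you split time at $t=\|v\|^2$ and then sum over dyadic shells of $A$), whereas the paper fixes $t$, splits space at $\|v\|\approx\sqrt{t}$ to get $\mathbb P(W(t)\in A)\le Ct^{(\kappa-d+3\nu)/2}\le Ct^{-1.2}$, and then union-bounds over $t\ge L^2$. The exponent arithmetic agrees in both versions ($L^{\kappa-d+2+3\nu}\le L^{-0.47}\le L^{-1/3}$), so there is no gap.
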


\begin{proof}
    Let $t\in [L^2,T)$ and let $\Lambda _t:= [-\sqrt{t},\sqrt{t}]^d$. By Assumption~\hyperref[h1]{\bf{(H1)}}  we have 
\begin{equation}\label{eq:in}
        \mathbb P \big( W(t)\in A\cap \Lambda _t \big) =\sum _{x\in A\cap \Lambda _t} \mathbb P (W(t)=x) \le | A\cap \Lambda _t | \cdot t^{-d/2+\nu } \le t^{(\kappa-d+3\nu)/2 }. 
    \end{equation}
Similarly, letting $j_0:= \lfloor \log _2(\sqrt{t}) \rfloor $ we have that
    \begin{equation}\label{eq:out}
    \begin{split}
        \mathbb P \big( W(t)&\in A \setminus \Lambda _t \big)= \sum _{x\in A\setminus  \Lambda _t} \mathbb P (W(t)=x) \le   \sum _{x\in A\setminus \Lambda _t } \|x\|^{-d+2\nu } \\
        &\le \sum _{j=j_0}^{\infty }  |A\cap [-2^{j+1},2^{j+1}]^d| \cdot 2^{j(-d+2\nu )}  \le C\sum _{j=j_0}^{\infty } 2^{j(\kappa -d +3\nu )} \le Ct^{(\kappa -d +3\nu )/2 }.
    \end{split}
    \end{equation}
    Combining \eqref{eq:in} and \eqref{eq:out} we obtain that $\mathbb P (W(t)\in A)\le Ct^{(\kappa -d+3\nu )/2}\le Ct^{-1.2}$, where in the last inequality we substituted the values of $\kappa$ and $\nu $ and used that $d\ge 6$. The lemma follows from this bound and a union bound over $t\in [L^2,T)$.
\end{proof}

\begin{proof}[Proof of Lemma~\ref{lem:avoiding}]
Let $A_t:=\{W(s):s\le t\}$ be the trace of $W$ up to time $t$. In the natural coupling, $W(t+s)$ and $W'(s)$ take a step in the same direction with probability at least $1-Ca$  (we used here that the order of the neighbours is translation invariant). Thus, letting $\Omega _1:=\{\forall s\le a^{-2/3}, W(t+s)=W(t)+W'(s)\}$, we have that $\mathbb P (\Omega _1\mid  W[0,t])\ge 1-Ca^{1/3}$.

Next, let $\Omega _2:=\{\forall s\in  [a ^{-2/3},T), \ W(t)+W'(s)\notin A_t\}$. Since $t\in \Rel (\infty)$, the set $A_t-W(t)$ satisfies the volume condition in Lemma~\ref{lem:A} with $L=a^{-1/3}$. Thus, by Lemma~\ref{lem:A} $\mathbb P (\Omega _2\mid W[0,t])\ge 1-Ca^{1/9}$. This finishes the proof of the lemma since on $\Omega _1 \cap \Omega _2 $ we have that $W(t+s)=W(t)+W'(s)$ for any $s<T$. Indeed, for every $s\in [a ^{-2/3},T)$, if $W[t,t+s]=W(t)+W'[0,s]$ and $W(t)+W'(s)\notin A_t$, then the walks $W(t+s)$ and $W(t)+W'(s)$ interact with identical histories and will take the same next step in the natural coupling (again, from translation invariance). The result follows by induction on $s\in [a ^{-2/3},T)$.
\end{proof}

\subsection{Concatenation}

Let $t>0$ and $t_1,t_2\ge 0$ such that $t_1+t_2=t$. Let $W_1,W_2$ be two independent once-reinforced walks starting from the origin of lengths $t_1,t_2$, respectively. Define the concatenated walk $\tilde W$ of length $t$ as follows
\begin{equation}
    \tilde{W}(s):=\begin{cases}
        \quad \quad W_1(s), \quad &s\in [0,t_1]\\
        W_1(t_1)+W_2(s-t_1), \quad &s\in [t_1,t]
    \end{cases}.
\end{equation}

In the natural coupling of $W$ and $\tilde W$, we generate $W$ with the uniform variables $U_1,\dots ,U_t$, generate $W_1$ with $U_1,\dots U_{t_1}$, and generate $W_2$ with $U_{t_1+1},\dots ,U_t$. Hence, in this coupling we have that $W[0,t_1]=\tilde W[0,t_1]$ deterministically. The next lemma shows that $W$ and $\tilde W$ continue to be close after $t_1$ with high probability.

\begin{lem}\label{lem:concat}
    Let $t\le T$ and let $t_1,t_2\ge 0$ with $t_1+t_2= t$. Let $W_1,W_2$ be independent ORRW of lengths $t_1,t_2$ respectively. Let $\tilde W$ be their concatenation and let $W$ be ORRW of length $t$. In the natural coupling of $\tilde W$ and $W$ we have that 
\[
\mathbb{P} \Big( \max _{s\le t}\|W(s)-\tilde W (s)\|\ge t^{2\epsilon } \Big) \le Ce^{-c\log^{2}t}.
\]
\end{lem}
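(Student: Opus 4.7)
The plan is to find a good ``splice point'' $\tau$ close to $t_1$ (with $\tau - t_1 \le t^\epsilon$) at which we can couple both $W$ and $W_2$ to a common virgin ORRW $V$ via Lemma~\ref{lem:avoiding}. The key observation is that both applications of Lemma~\ref{lem:avoiding} produce a virgin ORRW driven by the same uniforms $U_{\tau+1}, U_{\tau+2}, \ldots$ (the ones from the natural coupling), and under the translation-invariant neighbour ordering from Definition~\ref{def:natural} these two virgin ORRWs are identical trajectories, justifying the common name $V$.

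First, I would apply Lemma~\ref{lem:virgin relaxed} to $W$ and to $W_2$ separately. On an event of probability $\ge 1 - Ce^{-c\log^2 t}$, each walk has at least $0.8 t^\epsilon$ relaxed times in every length-$t^\epsilon$ window, so the set of candidates $\tau \in [t_1, t_1 + t^\epsilon]$ with $\tau \in \Rel(\infty)$ for $W$ and $\tau - t_1 \in \Rel(\infty)$ for $W_2$ has size at least $0.6 t^\epsilon$; in particular such a $\tau$ exists.

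Second, Lemma~\ref{lem:avoiding} applied at $\tau$ for $W$ and at $\tau - t_1$ for $W_2$ gives, on a good event, $W(\tau + r) = W(\tau) + V(r)$ and $W_2(\tau - t_1 + r) = W_2(\tau - t_1) + V(r)$ for all $r$. A direct substitution then yields, for $u \ge \tau$, that $W(u) - \tilde W(u) = W(\tau) - W(t_1) - W_2(\tau - t_1)$, a time-independent error of magnitude $\le 2(\tau - t_1) \le 2 t^\epsilon$. For $u \in [t_1, \tau]$ both walks have moved at most $t^\epsilon$ from the shared position $W(t_1) = \tilde W(t_1)$, and for $u \le t_1$ they agree identically. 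Hence the maximum deviation is bounded by $2 t^\epsilon < t^{2\epsilon}$ for $t$ sufficiently large.

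The main obstacle is to improve the per-application failure probability of $Ca^{1/9}$ given by Lemma~\ref{lem:avoiding} to the required $Ce^{-c\log^2 t}$. The natural strategy is iteration: with $\ge 0.6 t^\epsilon \gg \log^2 t$ good candidates $\tau^{(1)} < \tau^{(2)} < \ldots$, try the double coupling at $\log^2 t$ distinct splice points and declare success if any one of them works. Since the conditional failure probability at each attempt, given $W[0, \tau^{(i)}]$ and $W_2[0, \tau^{(i)} - t_1]$, is at most $2Ca^{1/9}$, taking $a$ small enough that $2Ca^{1/9} \le e^{-1}$ and iterating through the $\log^2 t$ attempts via the strong Markov structure of the two walks yields a total failure probability of at most $(2Ca^{1/9})^{\log^2 t} \le e^{-c\log^2 t}$. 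Verifying the conditional independence needed to chain the $\log^2 t$ bounds — the events at different splice points depend on overlapping tails of $W$ and $W_2$ — is the delicate technical point, requiring that the Lemma~\ref{lem:avoiding} conclusion be formulated conditionally on the past of each walk at the splice time, which is precisely how it is stated.
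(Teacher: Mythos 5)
Your error-accounting step is fine and matches the paper's (after a successful splice at a common relaxed time $\tau$ the discrepancy is frozen at $\le 2(\tau-t_1)$), and so is the use of Lemma~\ref{lem:virgin relaxed} to produce many common relaxed times near $t_1$. The genuine gap is the amplification of the per-splice failure probability $Ca^{1/9}$ to $e^{-c\log^2 t}$ by trying $\log^2 t$ splice points inside the single window $[t_1,t_1+t^\epsilon]$. The event ``the attempt at $\tau^{(i)}$ fails'' is not measurable with respect to the past at $\tau^{(i+1)}$: it depends on the entire future of $W$ (and $W_2$) up to time $T$, namely on whether the walk ever returns to $A_{\tau^{(i)}}$ and draws an unlucky uniform there. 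So you cannot condition on the previous failures and invoke Lemma~\ref{lem:avoiding} at the next splice point; the lemma's bound is conditional on the past of the walk only. Worse, this is not a reparable technicality: the failure events at the different $\tau^{(i)}$ are strongly positively correlated, because the dominant failure mode --- at some late time $s>t_1+t^\epsilon$ the walk $W$ comes back to an edge reinforced before $t_1$ (a set contained in every $A_{\tau^{(i)}}$) and the single uniform $U_s$ lands in the $O(a)$ discrepancy window --- breaks \emph{all} the attempts in the window simultaneously. The probability of that one event is only polynomially small in $t$ (it is essentially the quantity Lemma~\ref{lem:A} controls, of order $a^{1/9}$, and it does not shrink when you add splice points within the same window), so no product bound of the form $(2Ca^{1/9})^{\log^2 t}$ can hold, and the strategy ``succeed at one of $\log^2 t$ splice points near $t_1$'' cannot reach a $Ce^{-c\log^2 t}$ failure bound.

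The paper avoids this by spreading the attempts over the whole interval $[t_1,t]$ sequentially rather than packing them into one window: it defines $\zeta_j$ as the first time the current coupling visibly breaks and $\eta_{j+1}$ as the first common relaxed time after $\zeta_j$, and restarts there. Then the failure of attempt $j$ \emph{is} in the past at time $\eta_{j+1}$, so Lemma~\ref{lem:avoiding} legitimately gives conditional success probability $\ge 1/2$ at each restart, and repeated trials yield $\mathbb P(J\ge\log^2 t)\le 2^{-\log^2 t}$. The price is that each break costs up to an extra $t^\epsilon$ of discrepancy (the wait until the next common relaxed time), giving total error $\le C\log^2 t\cdot t^\epsilon\le t^{2\epsilon}$ --- which is precisely why the lemma is stated with $t^{2\epsilon}$ rather than the $2t^\epsilon$ your single-splice picture would suggest. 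You would need to restructure your argument along these sequential-restart lines for the proof to go through.
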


\begin{proof}
We may assume that $t_1\ge 1$. Let $S$ be the set of relaxed times of $W$ and let $\tilde S$ be the set of times of the form $t_1+s$ where $s$ is relaxed with respect to $W_2$. Define a sequence of stopping times by $\zeta _0=t_1$ and inductively for $j\ge 1$ 
\begin{equation*}
    \eta _{j} :=\min \big\{ s>\zeta _{j-1} : s\in S\cap \tilde S \big\},\quad \zeta _{j}:=\min \big\{ s>\eta _{j} : W(s)-W(\eta _j)\neq \tilde W(s)-\tilde W(\eta _j) \big\}.
\end{equation*}
Let $J:=\max \{j:\eta _{j+1}\le t \}$. We have that 
\begin{equation}\label{eq:sum1}
    \max _{s\le t} \|W(s)-\tilde W(s)\| \le 2(t-\zeta _{J+1})^++ 2\sum _{j=1}^{J} (\eta _{j+1}-\zeta _j).
\end{equation}
Recall the uniform variables $U_1,U_2,\dots $ used in the natural coupling and let $\mathcal F_t:=\sigma (U_1,\dots ,\linebreak[0]U_t)$. We apply Lemma~\ref{lem:avoiding} twice. First we apply it to $W$ and we get, with probability at least $1-Ca^{1/9}$, that $W(s)-W(\eta_j)=W'(s-\eta_j)$ for all $s\in[\eta_j,T]$, where $W'$ is an independent ORRW generated by $U_{\eta _{j}+1}, U_{\eta _{j}+2}\dots $. We repeat this argument for $W_2$ and get that it too is equal to $W'$ and hence $W(s)-W(\eta_j) = \tilde W(s)-\tilde W(\eta_j)$, again with probability at least $1-Ca^{1/9}$. If $a$ is sufficiently small, this last probability is at least $\frac{1}{2}$. Thus, we have $\mathbb P ( \zeta _{j} > t \mid \mathcal F _{\eta _{j}} ) \ge 1/2$
and therefore by repeated trials we have $\mathbb P ( J\ge \log ^2t ) \le 2^{-\log ^2t}$.

Next, observe that if the event of Lemma~\ref{lem:virgin relaxed} holds both for $\{W(s)\}_{s\le t}$ and $\{W_2(s)\}_{s\le t}$ (with $W_2$ extended to be of length $t$) then we have that $\eta _{j+1}-\zeta _j \le t^{\epsilon }$ for any $j\le J$, and also $(t-\zeta _{J+1})^+ \le t^\epsilon$. Thus, on these events and the event that $J\le \log ^2t$, the sum on the right hand side of \eqref{eq:sum1} is bounded by $t^{2\epsilon }$.
\end{proof}

The next result follows from Lemma \ref{lem:concat} by induction.

\begin{cor}\label{cor:concat} Let $t\le T$, $k\ge 1$ and $t_1,\dots ,t_k\ge 0$ with $t_1+\cdots +t_k=t$. Let $W_1,\dots ,W_k$ be independent ORRW of lengths $t_1,\dots ,t_k$ respectively. Let $\tilde W$ be their concatenation and let $W$ be ORRW of length $t$. In the natural coupling of $\tilde W$ and $W$ we have that 
\[
\mathbb{P} \Big( \max _{s\le t}\|W(s)-\tilde W (s)\|\ge kt^{2\epsilon } \Big) \le Cke^{-c\log^{2}t}.
\]
\end{cor}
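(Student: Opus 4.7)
The plan is induction on $k$, with Lemma~\ref{lem:concat} serving as the splitting tool at each step. For the base case $k=1$ the corollary is vacuous: in the natural coupling $\tilde W=W_1$ and $W$ are both generated from $U_1,\dots,U_t$ as a single ORRW starting from the origin, hence they coincide deterministically.

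For the inductive step, assume the corollary for $k-1$ and introduce an auxiliary walk $\tilde W'$ of length $t$ defined as the two-fold concatenation of $W_1$ (of length $t_1$) with a fresh independent ORRW $V$ of length $t-t_1$. I would couple $W$, $\tilde W'$, and $\tilde W$ naturally so that all three share $U_1,\dots,U_t$, with $W_1$ reading $U_1,\dots,U_{t_1}$ and with both $V$ and the concatenation of $W_2,\dots,W_k$ reading $U_{t_1+1},\dots,U_t$. Apply Lemma~\ref{lem:concat} to $W$ versus $\tilde W'$:
\[
\PP\bigl(\max_{s\le t}\|W(s)-\tilde W'(s)\|\ge t^{2\epsilon}\bigr)\le Ce^{-c\log^{2}t}.
\]
Since $\tilde W'$ and $\tilde W$ agree on $[0,t_1]$, and on $[t_1,t]$ (after shifting time by $t_1$ and subtracting $W_1(t_1)$) they become respectively the single ORRW $V$ and the $(k-1)$-fold concatenation of $W_2,\dots,W_k$, both of length $t-t_1\le T$, the induction hypothesis applied at length $t-t_1$ with $k-1$ pieces gives
\[
\PP\bigl(\max_{s'\le t-t_1}\|V(s')-\tilde W_{\mathrm{tail}}(s')\|\ge (k-1)(t-t_1)^{2\epsilon}\bigr)\le C(k-1)e^{-c\log^{2}(t-t_1)}.
\]

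Combining via triangle inequality and union bound produces the deviation estimate $t^{2\epsilon}+(k-1)(t-t_1)^{2\epsilon}\le kt^{2\epsilon}$. The main (mild) technicality is the probability factor, which I would handle by a short case split. When $t-t_1\ge t^{2\epsilon}$ one has $\log^{2}(t-t_1)\ge (2\epsilon)^{2}\log^{2}t$, so absorbing the constant $(2\epsilon)^2$ into $c$ the union bound reads $Cke^{-c\log^{2}t}$, as required. When $t-t_1<t^{2\epsilon}$ the probabilistic part of the induction hypothesis may be vacuous, but in this regime the tail walks both have length $<t^{2\epsilon}$ so their maximum deviation is deterministically at most $2(t-t_1)<2t^{2\epsilon}$; combined with the Step 1 bound this yields total deviation $\le 3t^{2\epsilon}\le kt^{2\epsilon}$ for $k\ge 3$, while the case $k=2$ is immediate because the $k-1=1$ induction hypothesis collapses to the trivial coincidence. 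This completes the induction.
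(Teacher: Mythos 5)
The overall strategy (peel off one piece at a time, using Lemma~\ref{lem:concat} once per step, with a case split on whether the remaining length is small) is the right idea and essentially matches what the paper is pointing at. There is, however, a genuine gap in how you handle the probability constant, and as written the induction does not close.

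The issue is in the step ``absorbing the constant $(2\epsilon)^2$ into $c$.'' In an induction on $k$, the constant $c$ appearing in the conclusion must be the same as the one used in the inductive hypothesis. Your inductive hypothesis gives $\PP(\text{tail bad})\le C(k-1)e^{-c\log^2(t-t_1)}$, and in the case $t-t_1\ge t^{2\epsilon}$ you convert $\log^2(t-t_1)\ge 4\epsilon^2\log^2 t$ to obtain $C(k-1)e^{-4c\epsilon^2\log^2 t}$. Since $\epsilon=1/(10^4 d)\ll 1$, we have $4\epsilon^2<1$, so this is a \emph{weaker} bound than the target $C(k-1)e^{-c\log^2 t}$: the exponent has been multiplied by a factor strictly smaller than $1$. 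Adding the Lemma~\ref{lem:concat} term $Ce^{-c_0\log^2 t}$ (which is fine) does not help, because you still need $e^{-4c\epsilon^2\log^2 t}\le e^{-c\log^2 t}$ to close the step with the same $c$, and this fails for all $c>0$. Shrinking $c$ in the conclusion is not permitted: the constant would degrade by $4\epsilon^2$ at every level, and after $k$ levels you would have $(4\epsilon^2)^k\,c$, which is useless for unbounded $k$.

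The repair is to \emph{unroll} the recursion instead of stating it as a formal induction, so that every probability error comes directly from Lemma~\ref{lem:concat} with its own constant $c_0$, never from a previously-weakened bound. Concretely, set $L_j:=t_j+\cdots+t_k$ and define intermediate walks $\tilde W^{(j)}$ as the concatenation of $W_1,\dots,W_j$ with a fresh independent ORRW $V_j$ of length $L_{j+1}$ (all built from the same $U$'s in the natural coupling, so $\tilde W^{(0)}=W$ and $\tilde W^{(j)}$ agrees with $\tilde W^{(j-1)}$ on $[0,t_1+\cdots+t_{j-1}]$). For each $j$ with $L_j\ge t^{\epsilon}$, Lemma~\ref{lem:concat} applied on the last $L_j$ steps gives $\PP\big(\max_s\|\tilde W^{(j-1)}(s)-\tilde W^{(j)}(s)\|\ge L_j^{2\epsilon}\big)\le C_0e^{-c_0\log^2 L_j}\le C_0e^{-c_0\epsilon^2\log^2 t}$. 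Letting $m$ be the last index with $L_m\ge t^{\epsilon}$ and bounding the final discrepancy between $\tilde W^{(m)}$ and $\tilde W$ deterministically by $2L_{m+1}<2t^{\epsilon}\le t^{2\epsilon}$, the triangle inequality gives total deviation at most $(k-1)t^{2\epsilon}+t^{2\epsilon}=kt^{2\epsilon}$, while a union bound over at most $k-1$ applications gives probability at most $(k-1)C_0e^{-c_0\epsilon^2\log^2 t}$. Taking $c:=c_0\epsilon^2$ yields the stated bound. This tracks a single fixed constant throughout, which is what your write-up silently replaces by a shrinking one.
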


When concatenating a short piece to a much longer one, we can ensure a concatenation error that is better in the beginning (at the expense of a worse probability bound).

\begin{cor}\label{cor:3.2}
   Let $t\le T$ and let $t_1,t_2\ge 0$ with $t_1+t_2= t$. Let $W_1,W_2$ be independent ORRW of lengths $t_1,t_2$ respectively. Let $\tilde W$ be their concatenation and let $W$ be a ORRW of length $t$. In the natural coupling of $\tilde W$ and $W$ we have that 
\[
\mathbb{P} \Big( \exists s\le t \text{ with } \|W(s)-\tilde W (s)\|\ge (t_1+s)^{3\epsilon } \Big) \le Ce^{-c\log^{2}t_1}.
\]
\end{cor}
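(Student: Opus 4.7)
The plan is to apply Lemma~\ref{lem:concat} at a dyadic sequence of time scales and combine the resulting estimates by a union bound. Under the natural coupling, $W$ and $\tilde W$ agree deterministically on $[0,t_1]$ (they use the same uniforms $U_1,\dots,U_{t_1}$ to trace out $W_1$), so the task is to control how the discrepancy grows after time $t_1$. The key observation is that applying Lemma~\ref{lem:concat} on the full interval $[0,t]$ yields only the crude uniform bound $t^{2\epsilon}$, which is far too large near $s=0$. Applying the same lemma to a shorter interval $[0,\tau]$ with $\tau$ of the same order as $t_1+s$ gives the much sharper bound $\tau^{2\epsilon}$ on that truncation. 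Since $3\epsilon>2\epsilon$, the small dyadic overhead can then be absorbed into the $(t_1+s)^{3\epsilon}$ target.

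Concretely, for each integer $k\ge 0$ I would set $\tau_k:=\min(2^{k+1}t_1,t)$. The restriction $W_2[0,\tau_k-t_1]$ is an ORRW of length $\tau_k-t_1\le t_2$ that is independent of $W_1$, so under the natural coupling the process $\tilde W$ stopped at time $\tau_k$ coincides with the concatenation of two independent ORRW of lengths $t_1$ and $\tau_k-t_1$. Lemma~\ref{lem:concat} applied at total time $\tau_k$ with concatenation point $t_1$ then yields
\[
\mathbb{P}\Big(\max_{s'\le \tau_k}\|W(s')-\tilde W(s')\|\ge \tau_k^{2\epsilon}\Big)\le Ce^{-c\log^2 \tau_k}\le Ce^{-c\log^2 t_1},
\]
using $\tau_k\ge t_1$.

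Intersecting these good events over the $O(\log(t/t_1))$ relevant scales and checking the bound scale by scale would complete the argument. Setting $I_0:=[0,t_1]\cap[0,t_2]$ and $I_k:=[2^{k-1}t_1,2^{k}t_1]\cap[0,t_2]$ for $k\ge 1$, the $I_k$ cover $[0,t_2]$. For $s\in I_k$ one has $t_1+s\le 2^{k+1}t_1\le \tau_k$, so on the good event $\|W(t_1+s)-\tilde W(t_1+s)\|\le \tau_k^{2\epsilon}\le (2^{k+1}t_1)^{2\epsilon}$. A short arithmetic check using $t_1+s\ge \max(t_1,2^{k-1}t_1)$ shows this quantity is at most $(t_1+s)^{3\epsilon}$ provided $t_1$ exceeds an absolute constant; for smaller $t_1$ the failure bound $Ce^{-c\log^2 t_1}$ is vacuous and the corollary is trivial. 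The union bound over scales then gives a total failure probability of $C\log(t)\cdot e^{-c\log^2 t_1}\le Ce^{-c'\log^2 t_1}$, absorbing the logarithmic factor into a slightly smaller constant in the exponent.

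The step I expect to require the most care is verifying that the \emph{same} global natural coupling of $W$ and $\tilde W$ is being used at every scale: under this single coupling, the restriction $\tilde W[0,\tau_k]$ must coincide exactly with the concatenation of $W_1$ with $W_2[0,\tau_k-t_1]$ so that Lemma~\ref{lem:concat} applies verbatim for each $k$. This is essentially automatic from the definition of the natural coupling via shared uniforms and the fact that ORRW restricted to an initial segment of time is again an ORRW of that shorter length, but I would want to state it explicitly before invoking the lemma at each scale.
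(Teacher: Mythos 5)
Your plan is essentially the paper's own proof: the paper applies Lemma~\ref{lem:concat} at the dyadic truncations $u=\min(2^kt_1,t)$ with concatenation point $t_1$, observes that under the single natural coupling $\tilde W$ restricted to $[0,u]$ is the concatenation of $W_1$ with the initial segment of $W_2$, and finishes with a union bound over $k\le\lceil\log_2(t/t_1)\rceil$, the passage from $2\epsilon$ to $3\epsilon$ absorbing the dyadic factor of $2$ once $t_1$ exceeds a constant (below which the stated bound is vacuous). The one step that is not right as you wrote it is the final aggregation: weakening each per-scale failure probability $Ce^{-c\log^2\tau_k}$ to $Ce^{-c\log^2 t_1}$ and then multiplying by the number of scales $\approx\log(t/t_1)$ does not in general give $Ce^{-c'\log^2 t_1}$ --- if $\log(t/t_1)$ is much larger than $e^{(c-c')\log^2 t_1}$ (e.g.\ $t_1$ of order $e^{\sqrt{\log\log t}}$, which is above any constant threshold, so the corollary is not vacuous there), the claimed absorption of the logarithmic factor into the exponent fails. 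The repair is simply not to weaken prematurely: since $\tau_k\ge 2^{k}t_1$ for the relevant $k$, one has $\log^2\tau_k\ge(\log t_1+k\log 2)^2\ge\log^2 t_1+ck^2$, so $\sum_k Ce^{-c\log^2\tau_k}\le Ce^{-c\log^2 t_1}\sum_{k\ge0} e^{-ck^2}\le C'e^{-c\log^2 t_1}$, which is what the paper's union bound amounts to. With that one-line fix (and your correct observation that the same global natural coupling serves at every scale), your argument coincides with the paper's.
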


\begin{proof}
  For every $k\ge 1$ we use Lemma~\ref{lem:concat} with $t_{\textrm{Lemma~\ref{lem:concat}}}=u=\min(2^kt_1,t)$ (and the $t_2$ of Lemma~\ref{lem:concat} being $u-t_1$). We get that, in the natural coupling, 
    \begin{equation*}
        \mathbb P \Big(\max _{s\le u}\|W(s)-\tilde W(s)\|\ge (2^kt_1)^{2\epsilon } \Big) \le Ce^{-c\log ^2u}.
    \end{equation*}
    The result follows from a union bound over $k\le \lceil \log _2(t/t_1) \rceil $ (we replaced $2\epsilon$ with $3\epsilon$ to compensate for a factor of 2, which we can if $t_1$ is sufficiently large, a fact we may assume).
\end{proof}

\section{Diffusivity}\label{sec:coupling}

In this section we use the concatenation result in order to prove diffusive estimates for the once-reinforced walk. In particular, we establish Assumption~\hyperref[h1]{\bf{(H1)}} on transition probabilities at time $T$, proving the first half of Theorem~\ref{thm:step}.

\subsection{The variance of the walk}\label{sec:var}

For any $t\ge 0$ let $v_t:={\rm Var}(W(t)_1)=\mathbb E [W(t)_1^2]$ where $W(t)_1$ is the first coordinate of $W(t)$ and where $\mathbb E [W(t)_1]=0$ by symmetry. Let 
\begin{equation*}
    \sigma :=\sqrt{v_T/T}.
\end{equation*}
\begin{prop}\label{prop:var}
    For any $t\le T$ we have  $d\cdot v_t/t\in [0.9,1.1]$. Further, $|v_t-\sigma ^2t|\le Ct^{1/2+5\epsilon }$.
\end{prop}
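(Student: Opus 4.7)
The plan is to derive both assertions from an approximate additivity of $v_t$ based on the concatenation results of the previous section.

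\textbf{Approximate additivity.}  For $t=t_1+t_2\le T$, let $\tilde W$ be the concatenation of two independent ORRW of lengths $t_1,t_2$, naturally coupled to $W$, and set $D:=W(t)-\tilde W(t)$.  By the lattice symmetry $\EE[W(s)_1]=0$ and the independence of the two concatenated pieces,
\[
  v_t \;=\; \EE[(\tilde W(t)_1+D_1)^2]\;=\;(v_{t_1}+v_{t_2})+2\EE[\tilde W(t)_1 D_1]+\EE[D_1^2].
\]
Lemma~\ref{lem:concat} gives $\|D\|\le t^{2\epsilon}$ with probability $1-e^{-c\log^2 t}$; combined with the deterministic bound $\|D\|\le 2t$ on the bad event, this yields $\EE[D_1^2]\le Ct^{4\epsilon}$.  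Cauchy--Schwarz then controls the cross term, giving
\[
  |v_t-v_{t_1}-v_{t_2}|\;\le\; C\sqrt{v_{t_1}+v_{t_2}}\;t^{2\epsilon}+Ct^{4\epsilon}.
\]

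\textbf{Part (1) by iteration to the base scale.}  For $t\le t_*:=a^{-1/(2d)}$ the SRW coupling used in Lemma~\ref{lem:base} gives $v_t=t/d+O(at^2)$ and hence $K(t):=v_t/t=1/d+O(a^{1-1/(2d)})$.  For larger $t$ I would take $t_1,t_2\in\{\lfloor t/2\rfloor,\lceil t/2\rceil\}$ in the additivity; dividing by $t$ yields the scalar recurrence
\[
  K(t)=K(t/2)+O\!\big(\sqrt{K(t/2)}\cdot t^{-1/2+2\epsilon}\big).
\]
A short bootstrap (taking the supremum of $K$ on $[t_*,t]$ combined with geometric summation of the errors) shows simultaneously that $K$ stays bounded and that $K(t)=K(t_*)+O(t_*^{-1/2+2\epsilon})$.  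Together with the base value of $K(t_*)$ this yields $K(t)=1/d+O(a^{1-1/(2d)})+O(t_*^{-1/2+2\epsilon})$, which for $a$ sufficiently small lies in $[0.9/d,\,1.1/d]$.  In particular $v_t\le Ct$ uniformly for $t\le T$.

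\textbf{Part (2) by dyadic recursion from $T$.}  Set $e(t):=v_t-\sigma^2 t$; by definition $e(T)=0$.  The binary additivity rewritten for $e$ becomes $e(T/2^j)=e(T/2^{j-1})/2-\Delta_j/2$ with $|\Delta_j|\le C(T/2^{j-1})^{1/2+2\epsilon}$ (now using $v_s\le Cs$ from Part (1)).  Solving this linear recurrence starting from $e(T)=0$, the sum of the shrinking half-geometric contributions yields $|e(T/2^j)|\le C(T/2^j)^{1/2+2\epsilon}$.  To handle arbitrary $t\in[T_0,T]$, I would greedily decompose $t$ as a sum of distinct values of the form $T/2^j$: set $r_0:=t$ and at each step let $t_{k+1}$ be the largest $T/2^j\le r_k$ and $r_{k+1}:=r_k-t_{k+1}$.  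Since the next-larger such dyadic exceeds $r_k$ one has $r_{k+1}<r_k/2$, so the decomposition terminates after $O(\log t)$ steps.  Applying approximate additivity to the split $r_k=t_{k+1}+r_{k+1}$ at each step and combining the additivity errors with the dyadic bounds on $|e(t_{k+1})|$ (both forming geometric sums in $r_k^{1/2+2\epsilon}$) yields $|e(t)|\le Ct^{1/2+2\epsilon}\le Ct^{1/2+5\epsilon}$.

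\textbf{Main obstacle.}  The delicate point is the circular appearance of $v_t$ on the right-hand side of the additivity bound: an a priori bound $v_t=O(t)$ is needed before the iteration can produce the sharper estimate $v_t=t/d+o(t)$.  The bootstrap in Part (1) closes this loop by passing to the supremum of $K$ on $[t_*,t]$ and using that the error terms decay geometrically with the scale; once this is in hand, the dyadic recursion of Part (2) is a clean telescoping.
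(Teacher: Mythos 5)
Your proposal is correct in outline and relies on the same key input as the paper (the approximate additivity $|v_{t}-v_{t_1}-v_{t_2}|\le C\sqrt{v_{t_1}+v_{t_2}}\,t^{2\epsilon}+Ct^{4\epsilon}$ derived from Lemma~\ref{lem:concat}), but the subsequent iteration is organized quite differently. The paper runs a secondary induction in $T'$ assuming $d\cdot v_t/t\in[0.9,1.1]$ for $t<T'$ (which turns the additivity into the clean bound $|v_{t+s}-v_t-v_s|\le Ct^{1/2+2\epsilon}$), and then proves a uniform comparison $|v_s/s - v_t/t|\le Cs^{-1/2+2\epsilon}$ for all $s\le t$ via an Euclidean-division trick: setting $f(s):=v_s-sv_t/t$ and writing $t=qs+r$, one bounds $|f(s)|$ by yet another induction using the value $|f(r)|$ at the smaller remainder $r<s$. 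This single comparison then closes both the inner induction (evaluate at a fixed small base scale $s_0$) and yields the rate $|v_t-\sigma^2 t|\le Ct^{1/2+5\epsilon}$ (evaluate at $t=T$). Your route instead keeps the self-referential $\sqrt{v_{t_1}+v_{t_2}}$ explicit and runs two separate dyadic iterations: an upward bootstrap from the base scale $t_*$ for the boundedness of $K(t)=v_t/t$, and a downward dyadic recursion from $T$ for $e(t)=v_t-\sigma^2 t$, patched together at general $t$ by a greedy decomposition into dyadic pieces $T/2^j$. Both approaches are valid and come down to the same geometric summation dominated by the base scale; the paper's Euclidean-division argument is arguably slicker (one comparison serving both clauses), while yours avoids the $t=qs+r$ trick at the cost of some extra bookkeeping — in particular you should address that $T/2^j$ is not an integer (so $e$ is not literally controlled at those points; nearest-integer rounding works since $|e(s+1)-e(s)|\le C\sqrt{s}$, which is below the target $s^{1/2+2\epsilon}$), and that the greedy decomposition leaves a small terminal remainder that must be absorbed separately. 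These are minor gaps of detail rather than of idea.
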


The rest of \S\ref{sec:var} is devoted to the proof of Proposition~\ref{prop:var}. This is done using an additional induction over time. We will assume inductively that for some $T'\le T$ we have that
\begin{equation}\label{eq:induction}
    d\cdot v_t/t\in [0.9,1.1],\quad \forall t\in [1,T')
\end{equation}
and show that it holds also for $t=T'$. For the induction step, we start with the following lemma which shows that $v_t$ is approximately additive.

\begin{lem}\label{lem:additive}
    For any $1\le s\le t\le T'$ with $s+t\le T'$ we have that 
    \begin{equation*}
        \big|v_{t+s}-v_{t}-v_{s} \big| \le Ct^{1/2+2\epsilon }.
    \end{equation*}
\end{lem}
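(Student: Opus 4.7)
The plan is to compare $v_{t+s}$ to the variance of the concatenation $\tilde W$ of two independent walks $W_1,W_2$ of lengths $t$ and $s$, and then use Lemma~\ref{lem:concat} to show the two are close.

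First, by the symmetry of ORRW under reflection of the first coordinate, $\EE[W(u)_1]=0$ for every $u$, so $v_u=\EE[W(u)_1^2]$. For the concatenated walk $\tilde W(t+s)_1=W_1(t)_1+W_2(s)_1$, independence and the mean-zero property give exactly
\begin{equation*}
    \EE[\tilde W(t+s)_1^2]=v_t+v_s.
\end{equation*}
Work in the natural coupling of Lemma~\ref{lem:concat} and write $D:=W(t+s)_1-\tilde W(t+s)_1$. Then
\begin{equation*}
    v_{t+s}-v_t-v_s=\EE\bigl[W(t+s)_1^2-\tilde W(t+s)_1^2\bigr]=\EE\bigl[2\tilde W(t+s)_1\,D+D^2\bigr].
\end{equation*}

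Next I would bound $\EE[D^2]$. On the good event $G:=\{\max_{u\le t+s}\|W(u)-\tilde W(u)\|\le (t+s)^{2\epsilon}\}$, which has probability $\ge 1-Ce^{-c\log^2(t+s)}$ by Lemma~\ref{lem:concat}, we get $|D|\le(t+s)^{2\epsilon}\le Ct^{2\epsilon}$ (since $s\le t$). On the complement $G^c$, we use the deterministic bound $|D|\le 2(t+s)\le 4t$, so that the contribution of $G^c$ to $\EE[D^2]$ is at most $16t^2\cdot Ce^{-c\log^2 t}$, which is negligible compared to $t^{4\epsilon}$ once $t$ is large enough (and the small-$t$ case of the lemma is absorbed into the constant $C$). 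This yields $\EE[D^2]\le Ct^{4\epsilon}$.

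Finally I would apply Cauchy--Schwarz to the cross term, using the inductive hypothesis \eqref{eq:induction} on $v_t$ and $v_s$:
\begin{equation*}
    \bigl|\EE[\tilde W(t+s)_1\,D]\bigr|\le\sqrt{\EE[\tilde W(t+s)_1^2]}\cdot\sqrt{\EE[D^2]}\le\sqrt{v_t+v_s}\cdot Ct^{2\epsilon}\le C\sqrt{t}\,t^{2\epsilon},
\end{equation*}
where the last step used $v_t+v_s\le 2.2t/d$. Combining with the bound on $\EE[D^2]$ gives $|v_{t+s}-v_t-v_s|\le Ct^{1/2+2\epsilon}+Ct^{4\epsilon}\le Ct^{1/2+2\epsilon}$.

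I do not anticipate a serious obstacle here: the whole argument is a clean application of concatenation together with Cauchy--Schwarz. The only mildly delicate point is ensuring that the rare event $G^c$, on which the coupling fails, does not ruin the $L^2$-estimate on $D$; this is handled by the deterministic bound $|D|\le 2(t+s)$ combined with the super-polynomial probability decay coming from Lemma~\ref{lem:concat}.
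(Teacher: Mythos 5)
Your proposal is correct and follows essentially the same strategy as the paper: compare $v_{t+s}$ to the variance of the concatenated walk $\tilde W$ via Lemma~\ref{lem:concat}, then control the difference using Cauchy--Schwarz and the inductive variance bound \eqref{eq:induction}. The only cosmetic difference is that you expand $W^2-\tilde W^2=2\tilde W D+D^2$ exactly (with $D=W_1-\tilde W_1$), whereas the paper does a two-sided estimate with $(|\tilde W_1|\pm 2t^{2\epsilon})^2$ on the good event; the computations are interchangeable and both yield $Ct^{1/2+2\epsilon}$.
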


\begin{proof}
   Let $W_1,W_2$ be two ORRW of lengths $s,t$ respectively and let $\tilde W$ be their concatenation. Note that $\mathbb E \big[\tilde W(s+t)_1^2 \big] =v_{s}+v_{t}$. Let $\Omega :=\{ \|W(t+s)-\tilde W(t+s)\|\le 2t^{2\epsilon }\}$ be the event that the concatenation works. By Lemma~\ref{lem:concat} we have that $\mathbb P (\Omega ^c) \le Ce^{-c\log ^2t}$. Thus,
    \begin{equation*}
    \begin{split}
        v_{s+t}&=\mathbb E \big[ W(s+t)_1^2 \big] \le (s+t)^2 \cdot  \mathbb P (\Omega ^c) +\mathbb E \big[ (|\tilde W(s+t)_1|+2t^{2\epsilon } )^2 \big] \\
        &\le Ce^{-c\log ^2t} + \mathbb E \big[\tilde W(s+t)_1^2 \big] +4t^{2\epsilon }\mathbb E |\tilde W(s+t)_1|+4t^{4\epsilon }\\
        &\le Ce^{-c\log ^2t} + v_{s}+v_{t} +4t^{2\epsilon }\sqrt{v_{s}+v_{t}}+4t^{4\epsilon } \le v_{s}+v_{t} +Ct^{1/2+2\epsilon },
    \end{split}
    \end{equation*} 
    where in the second inequality we used Cauchy-Schwarz and in the last inequality we used the inductive assumption \eqref{eq:induction} for $s$ and $t$. 
    Similarly, we have that 
    \begin{equation*}
        v_{s+t} \ge \mathbb E \big[ (|\tilde W(s+t)_1|-2t^{2\epsilon } )^2 \big] -Ce^{-c\log ^2t}\ge v_{s}+v_t -Ct^{1/2+2\epsilon },
    \end{equation*}
    as needed.
\end{proof}

\begin{cor}\label{cor:t/2}
    For any $t\le T'$ and $s\in [t/2,t]$ we have that 
    \begin{equation*}
        \Big| \frac{v_s}{s}-\frac{v_t}{t} \Big| \le Ct^{-1/2+2\epsilon }
    \end{equation*}
\end{cor}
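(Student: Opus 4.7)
The plan is to combine Lemma~\ref{lem:additive} with an iterated dyadic-doubling bridge, closed by strong induction on $t$. Write $\phi(u) := v_u/u$ and $k := t-s$; the claim is trivial when $k=0$, so I assume $k\ge 1$.

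First I would apply Lemma~\ref{lem:additive} with $(s_{\text{lem}},t_{\text{lem}})=(k,s)$ (valid since $k\le s$ and $k+s=t\le T'$). Dividing $|v_t-v_s-v_k|\le Cs^{1/2+2\epsilon}$ by $t$ and writing $v_u = u\phi(u)$, I obtain a recursion of the form
\begin{equation}\label{eq:cor-rec}
    \phi(t)-\phi(s)=\frac{k}{t}\bigl(\phi(k)-\phi(s)\bigr)+O(t^{-1/2+2\epsilon}).
\end{equation}
Second, applying the same lemma with both arguments equal to $u$ yields $|\phi(2u)-\phi(u)|\le Cu^{-1/2+2\epsilon}$, and a telescoping argument over dyadic scales (the geometric series $\sum_{j} 2^{-j(1/2-2\epsilon)}$ converges since $\epsilon$ is small) gives the bridge
\begin{equation}\label{eq:cor-dbl}
    |\phi(2^Ju)-\phi(u)|\le Cu^{-1/2+2\epsilon}
\end{equation}
for every $J\ge 0$ with $2^Ju\le T'$.

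To close the estimate I would induct strongly on $t$. For the inductive step, I would pick $J\ge 0$ with $2^Jk\in[s/2,s]$; such $J$ exists because the interval $[s/(2k),s/k]$ has length $\ge 1/2$ (as $k\ge 1$ and $s\ge k$) and hence contains a power of $2$. The triangle inequality then gives
\begin{equation*}
    |\phi(k)-\phi(s)|\le |\phi(k)-\phi(2^Jk)|+|\phi(2^Jk)-\phi(s)| \le Ck^{-1/2+2\epsilon}+Cs^{-1/2+2\epsilon}\le Ck^{-1/2+2\epsilon},
\end{equation*}
using \eqref{eq:cor-dbl} for the first term, the inductive corollary at the strictly smaller time $s$ (applicable since $2^Jk\in[s/2,s]$) for the second, and $k\le s$ at the end. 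Plugging into \eqref{eq:cor-rec} and using $k\le t/2$ gives a bound of the form $C\cdot 2^{-1/2-2\epsilon}\,t^{-1/2+2\epsilon}+O(t^{-1/2+2\epsilon})$, and since $2^{-1/2-2\epsilon}<1$ the induction constant can be chosen large enough to absorb the $O(\cdot)$ term and close the recursion. The base case (bounded $t$) is immediate from $|\phi(s)-\phi(t)|\le 0.2/d$ via \eqref{eq:induction}.

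The main obstacle is that the naive estimate $|\phi(k)-\phi(s)|\le 0.2/d$ (straight from \eqref{eq:induction}) is constant in $t$ and contributes only $O(1/d)$ to \eqref{eq:cor-rec}---far larger than the target $O(t^{-1/2+2\epsilon})$. The key insight is the dyadic bridge \eqref{eq:cor-dbl}, which supplies the polynomial gain $k^{-1/2+2\epsilon}$; this is exactly what the geometric factor $k/t\le 1/2$ in \eqref{eq:cor-rec} needs to produce the desired rate.
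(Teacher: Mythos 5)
Your proof is correct, but it takes a genuinely different route from the paper. The paper fixes $t$, introduces the auxiliary function $f(s) := v_s - s\,v_t/t$, and shows $|f(s)| \le 2C_0 t^{1/2+2\epsilon}$ for $s \le t/2$ by an induction on $s$ inside the fixed $t$ that uses Euclidean division $t = qs + r$ and a $q$-fold application of Lemma~\ref{lem:additive}; the range $s \in [t/2,t]$ is then handled by one more application with the pair $(s, t-s)$, and dividing by $s$ yields the corollary. You instead work directly with $\phi(u) = v_u/u$, derive from a single application of Lemma~\ref{lem:additive} the contraction $\phi(t) - \phi(s) = \tfrac{k}{t}\bigl(\phi(k)-\phi(s)\bigr) + O(t^{-1/2+2\epsilon})$, and close it by a strong induction on $t$ itself combined with the dyadic doubling bridge \eqref{eq:cor-dbl} that carries $\phi(k)$ up to a scale comparable to $s$. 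Both are sound; the paper's version avoids the extra outer induction on $t$ and the doubling lemma, while your version makes the geometric contraction factor $k/t \le 1/2$ (with $2^{-1/2-2\epsilon}<1$ absorbing the error) and the source of the polynomial gain more explicit.

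One small slip in the write-up: you justify the existence of $J\ge 0$ with $2^J k \in [s/2,s]$ by saying that the interval $[s/(2k), s/k]$ has length $\ge 1/2$ and hence contains a power of $2$. That implication is false in general (e.g.\ $[5,\,5.6]$ has length $>1/2$ and contains no power of $2$). The correct reason is that $[s/(2k),\,s/k]$ has the form $[a,2a]$ with $a = s/(2k) \ge 1/2$ (since $s\ge k$), and every interval $[a,2a]$ with $a\ge 1/2$ contains a nonnegative integer power of $2$: if $a\ge 1$ take $2^{\lceil\log_2 a\rceil}$, and if $1/2\le a<1$ take $2^0=1$. With this fix the remaining steps — the telescoping estimate behind \eqref{eq:cor-dbl}, the triangle inequality, and the choice of a sufficiently large induction constant — all go through as you describe.
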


\begin{proof}
    Define the function $f(s):=v_s-sv_t/t$ for $s\in [0,t]$ and let $C_0$ be the constant from Lemma~\ref{lem:additive}. We claim that for all $s\le t/2$ we have that 
\begin{equation}\label{eq:induction2}
        |f(s)|\le 2C_0t^{1/2+2\epsilon }.
    \end{equation}
We prove this using induction on $s\le t/2$ (so here we have three nested inductions). Suppose that \eqref{eq:induction2} holds for any $r<s$ and we prove it for $s$. To this end, write $t=qs+r$ with $q\ge 2$ and a remainder $r<s$. We have that 
    \begin{equation*}
        |qf(s)+f(r)| = |qv_s+v_r-v_t| \le qC_0t^{1/2+2\epsilon },
    \end{equation*}
    where in the last inequality we used Lemma~\ref{lem:additive} repeatedly $q$ times. Using \eqref{eq:induction2} for $r$ we obtain that 
    \begin{equation*}
     |f(s)|\le |f(r)|/q +C_0t^{1/2+2\epsilon } \le |f(r)|/2 +C_0t^{1/2+2\epsilon } \le 2C_0t^{1/2+2\epsilon },  
    \end{equation*}
    completing this induction. For $s\ge t/2$ we have that 
    $$|f(s)+f(t-s)|=|v_t-v_s-v_{t-s}|\le C_0t^{1/2+2\epsilon }$$
    and therefore $|f(s)|\le |f(t-s)|+C_0t^{1/2+2\epsilon } \le 3C_0t^{1/2+2\epsilon }$. This finishes the proof.
\end{proof}

\begin{cor}\label{cor:3}
    For any $s\le t\le T'$ we have that 
    \begin{equation}\label{eq:cor3}
        \Big| \frac{v_s}{s}-\frac{v_t}{t} \Big| \le Cs^{-1/2+2\epsilon }
    \end{equation}
\end{cor}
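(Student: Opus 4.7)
The plan is to bootstrap Corollary~\ref{cor:t/2} from doubling scales to all scales via a telescoping dyadic chain. Corollary~\ref{cor:t/2} already handles the case $s\in[t/2,t]$ with the error $Ct^{-1/2+2\epsilon}$; for general $s\le t$ I will insert intermediate scales at which each consecutive pair sits in the $[t'/2,t']$ regime, then sum the errors as a convergent geometric series.

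Concretely, I would fix $s\le t\le T'$ and define $s_0:=s$ and inductively $s_i:=\min(2s_{i-1},t)$, stopping at the smallest $k$ with $s_k=t$. By construction $s_{i-1}\in[s_i/2,s_i]$ for each $i=1,\dots,k$, so Corollary~\ref{cor:t/2} applied at scale $s_i$ gives
\begin{equation*}
  \Big|\frac{v_{s_{i-1}}}{s_{i-1}}-\frac{v_{s_i}}{s_i}\Big|\le C s_i^{-1/2+2\epsilon}.
\end{equation*}
A telescoping triangle inequality then yields
\begin{equation*}
  \Big|\frac{v_s}{s}-\frac{v_t}{t}\Big|\le C\sum_{i=1}^{k}s_i^{-1/2+2\epsilon}.
\end{equation*}
Since $s_i\ge 2^{i-1}s$ for all $i$, bounding the partial sum by the infinite one gives
\begin{equation*}
  \sum_{i=1}^{k}s_i^{-1/2+2\epsilon}\le s^{-1/2+2\epsilon}\sum_{i\ge 1}2^{(i-1)(-1/2+2\epsilon)}.
\end{equation*}
The exponent $-1/2+2\epsilon$ is negative (recall $\epsilon=1/(10^4 d)$ is tiny), so this geometric series is a finite constant depending only on $d$, and the desired bound $Cs^{-1/2+2\epsilon}$ follows.

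There is no real obstacle here — the only thing to check is that Corollary~\ref{cor:t/2} is legitimately applicable to each $(s_{i-1},s_i)$ pair, which is automatic from $s_i\le t\le T'$ and the choice $s_{i-1}\ge s_i/2$. One minor sanity check: if $s=t$ then the sum is empty and the inequality is trivial; if $k=1$ the corollary reduces directly to Corollary~\ref{cor:t/2}, but with the improvement that the error is expressed in terms of $s$ rather than $t$, which is exactly what the doubling chain buys us.
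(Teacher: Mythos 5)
Your proof is correct and is essentially the paper's own argument: the same dyadic sequence $s_0=s$, $s_i=\min(2s_{i-1},t)$, the same application of Corollary~\ref{cor:t/2} to each consecutive pair, and the same telescoping sum bounded by a convergent geometric series. The only difference is that you spell out the geometric-series bound and the applicability check slightly more explicitly than the paper does.
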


\begin{proof}
    Let $s_0:=s$ and for $i\ge 1$ let $s_{i}:=\min(2s_{i-1}, t)$. Let $k$ be the first integer for which $s_k=t$ and write 
    \begin{equation*}
         \Big| \frac{v_s}{s}-\frac{v_t}{t} \Big| \le  \Big| \frac{v_{s_0}}{s_0}-\frac{v_{s_1}}{s_1} \Big|+\cdots + \Big| \frac{v_{s_{k-1}}}{s_{k-1}}-\frac{v_{s_k}}{s_k} \Big| \le Cs_0^{-1/2+2\epsilon } +\cdots +Cs_k^{-1/2+2\epsilon }\le Cs^{-1/2+2\epsilon},
    \end{equation*}
    where the second inequality is by Corollary~\ref{cor:t/2}.
\end{proof}

We can now finish the proof of Proposition~\ref{prop:var}. Indeed, choose some $s$ such that the right-hand side of \eqref{eq:cor3} is smaller than 0.05 (take the smallest such $s$ and call it $s_0$). Then require that $a$ is sufficiently small so that for any $t\le s_0$ we have that $d\cdot v_t/t\in [0.95,1.05]$. This can be done since there is a coupling of $W$ with a simple random walk $X$ such that $\mathbb P (W(t)\neq X(t))\le Cta$ and therefore when $t\le s_0$ we have
\begin{equation*}
    |v_t-t/d| = \big|\mathbb E [W(t)_1^2-X(t)_1^2]\big| \le 2t^2\mathbb P (W(t)\neq X(t)) \le Cs_0^2a.
\end{equation*}
If $T'\le s_0$, the inductive assumption in \eqref{eq:induction} is proved immediately (this is the induction base). If $T'>s_0$ then it follows from Corollary~\ref{cor:3}, used with $s=s_0$ and $t=T'$ that $d\cdot v_{T'}/T'\in [0.9,1.1]$. This completes the induction in \eqref{eq:induction}. It follows that Corollary~\ref{cor:3} holds with $T'=T$ which implies Proposition~\ref{prop:var}.

\subsection{Traveling far}

\begin{lem}\label{lem:traveling}
    For any $t\le T$ we have that 
    \begin{equation*}
        \mathbb P \Big( \max _{s\le t} \|W(s)\| \ge t^{1/2+3\epsilon } \Big) \le Ce^{-c\log ^2t}.
    \end{equation*}
\end{lem}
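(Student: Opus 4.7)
The plan is to couple $W$ to a concatenation of many independent shorter ORRWs and then control the latter via a concentration inequality together with a maximal inequality for sums of independent symmetric random variables.

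More precisely, I would fix $k=\lceil\sqrt t\,\rceil$, split $[0,t]$ into $k$ chunks of lengths $t_i\in\{\lfloor t/k\rfloor,\lceil t/k\rceil\}$ with $\sum_i t_i=t$, and let $W_1,\dots,W_k$ be independent ORRWs with these lengths. Writing $\tilde W$ for their concatenation, Corollary~\ref{cor:concat} applied in the natural coupling yields
\[
\PP\Bigl(\max_{s\le t}\|W(s)-\tilde W(s)\|\ge kt^{2\epsilon}\Bigr)\le Cke^{-c\log^2 t}\le Ce^{-c\log^2 t},
\]
and the error $kt^{2\epsilon}=O(t^{1/2+2\epsilon})$ is well below the target $t^{1/2+3\epsilon}$. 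Set $X_i:=W_i(t_i)$ and $S_j:=\sum_{i\le j}X_i$, so that $\tilde W(T_j)=S_j$ for $T_j:=t_1+\cdots+t_j$. The $X_i$ are independent and symmetric (by reflection invariance of ORRW on $\ZZ^d$), each coordinate of $X_i$ is bounded by $t_i=O(\sqrt t\,)$, and Proposition~\ref{prop:var} gives $\sum_{i\le k}\var(X_i^{(m)})\le 1.1\,t/d$ for every coordinate $m$.

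For each $m$ I would apply Bernstein's inequality with $\mu=t^{1/2+\epsilon}/\sqrt d$; the variance term in the denominator is $O(t)$ and the range term is $O(\sqrt t\cdot\mu)=O(t^{1+\epsilon})$, giving $\PP(|S_k^{(m)}|\ge\mu)\le 2e^{-ct^\epsilon}$. Because the $X_i$ are symmetric, Levy's maximal inequality promotes this pointwise bound to
\[
\PP\Bigl(\max_{j\le k}|S_j^{(m)}|\ge\mu\Bigr)\le 4e^{-ct^\epsilon},
\]
and a union bound over the $d$ coordinates yields $\PP(\max_{j\le k}\|S_j\|\ge t^{1/2+\epsilon})\le 4de^{-ct^\epsilon}$. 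Within each chunk the walk $\tilde W$ is simply a single ORRW of length $O(\sqrt t\,)$, so $\|\tilde W(s)-S_j\|\le t/k=O(\sqrt t\,)$ deterministically for $s\in[T_j,T_{j+1}]$. Adding the concatenation error, the endpoint bound, and the within-chunk fluctuation gives $\max_{s\le t}\|W(s)\|\le t^{1/2+3\epsilon}$ with probability at least $1-Ce^{-c\log^2 t}$ once $t$ is sufficiently large.

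The delicate point is the choice of $k$: larger $k$ shrinks each increment (helping concentration) but inflates the concatenation error, which is linear in $k$. The choice $k\approx\sqrt t$ balances the two, with both errors coming out $O(t^{1/2+2\epsilon})$. Another essential ingredient is to use Bernstein rather than Hoeffding: we need to exploit the diffusive variance bound $\sum_i\var(X_i^{(m)})=O(t)$, whereas Hoeffding would instead use the much larger range-squared sum $\sum_i t_i^2=O(t^{3/2})$ and fail. Finally, Levy's inequality requires symmetry of the $X_i$, a one-line observation from the reflection invariance of ORRW on $\ZZ^d$.
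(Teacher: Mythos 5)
Your proof is correct and takes essentially the same approach as the paper: split $[0,t]$ into $k\approx\sqrt t$ chunks, couple $W$ to the concatenation $\tilde W$ via Corollary~\ref{cor:concat}, and concentrate the independent increments using the variance bound from Proposition~\ref{prop:var}. The only (cosmetic) difference is in the concentration step --- the paper applies Freedman's inequality at each fixed $s\le t$ and takes a union bound over $s$, whereas you apply Bernstein at the chunk endpoints, upgrade to a maximum over $j$ via L\'evy's inequality using symmetry, and control within-chunk fluctuations deterministically by $O(\sqrt t)$ --- both give a tail of order $e^{-ct^\epsilon}$, comfortably below $e^{-c\log^2 t}$.
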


\begin{proof}
    Fix $t\le T$ sufficiently large. Let $W_1,\dots ,W_k$ be independent ORRW of length $\lfloor \sqrt{t} \rfloor $ with $k$ being $\lceil t/\lfloor \sqrt{t}\rfloor  \rceil \approx \sqrt{t}$. Let $\tilde W$ be the concatenation of $W_1,\dots ,W_k$. By Corollary~\ref{cor:concat}, there is a coupling of $W$ and $\tilde W$ such that 
\begin{equation}\label{eq:concat6}
 \mathbb{P} \Big( \max _{s\le t}\|W(s)-\tilde W (s)\|\ge 2t^{1/2+2\epsilon } \Big) \le Ce^{-c\log^{2}t}.
\end{equation}
Next, for any $s\le t$, each coordinate of $\tilde{W}(s)$ is a sum of at most $k$ independent variables with zero expectation whose variance is bounded by $\sqrt{t}$ by Proposition~\ref{prop:var}, that are deterministically bounded by $\sqrt{t}$. Thus, by Freedman's inequality \cite[Theorems 3.6 and 3.7 (8 \& 9 in the arXiv version)]{chung2006concentration} and a union bound we have that 
\begin{equation*}
    \mathbb P \Big(  \max _{s\le t} \|\tilde W (s)\| \ge t^{1/2+2\epsilon }  \Big)\le Ct\exp \Big( -\frac{ct^{1+4\epsilon }}{k\sqrt{t} +t^{1+2\epsilon } }\Big) \le Ce^{-\log ^2t}.
\end{equation*}
Combining the last estimate with \eqref{eq:concat6} completes the proof of the lemma.
\end{proof}

\subsection{Coupling with Brownian motion}

Recall that $\sigma  =\sqrt{v_T/T}$ and $\sigma ^2\in [0.9/d,1.1/d]$.

\begin{lem}\label{lem:coupling}
For all $t\le T$, there is a coupling of $W$ and a standard Brownian motion $B$ such that
\[
\mathbb{P} \Big( \max _{s\le t}\|W(s)-\sigma B(s)\|\ge t^{1/3+4\epsilon } \Big) \le Ce^{-c\log^{2}t}.
\]
\end{lem}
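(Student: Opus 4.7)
The plan is to reduce to the i.i.d.\ case via concatenation and then apply a multidimensional KMT strong approximation for sums of i.i.d.\ random vectors. Fix the intermediate scale $t_0=\lfloor t^{2/3}\rfloor$ and let $k=\lceil t/t_0\rceil\approx t^{1/3}$. Let $W_1,\dots,W_k$ be independent copies of ORRW of length $t_0$ and let $\tilde W$ be their concatenation; Corollary~\ref{cor:concat} gives a coupling in which $\max_{s\le t}\|W(s)-\tilde W(s)\|\le k\,t^{2\epsilon}=t^{1/3+2\epsilon}$ outside a set of probability $Ce^{-c\log^2 t}$. It therefore suffices to couple $\tilde W$ with $\sigma B$.

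Write $Y_i=W_i(t_0)$ and $S_n=Y_1+\cdots+Y_n$. The $Y_i$ are i.i.d., mean zero, and by the lattice symmetries of ORRW have covariance $v_{t_0}I_d$; by Proposition~\ref{prop:var} $v_{t_0}=\sigma^2 t_0+O(t_0^{1/2+5\epsilon})$. Applying Lemma~\ref{lem:traveling} to each $W_i$ and union-bounding, $\max_i\|Y_i\|\le t_0^{1/2+3\epsilon}$ off an event of probability $Ce^{-c\log^2 t}$; after a harmless truncation on that event the $Y_i$ may be assumed bounded by $t_0^{1/2+3\epsilon}$, hence have uniform exponential moments. I would then invoke a multivariate KMT strong approximation in the Einmahl/Zaitsev style to obtain i.i.d.\ standard Gaussians $Z_i\sim N(0,I_d)$ such that
\[
\max_{n\le k}\Big\|S_n-\sqrt{v_{t_0}}\sum_{i=1}^n Z_i\Big\|\le C(\log k)\,t_0^{1/2+3\epsilon}
\]
off a set of small (polynomial-in-$k$) probability. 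Let $B$ be the standard Brownian motion with $\sigma B(nt_0)=\sigma\sqrt{t_0}\sum_{i\le n}Z_i$ at the grid points and an independent Brownian bridge in between. Since $|\sqrt{v_{t_0}}-\sigma\sqrt{t_0}|\le Ct_0^{5\epsilon}$ and $\max_{n\le k}\|\sum_{i\le n}Z_i\|\le C\sqrt{k\log k}$ with high probability, the residual covariance-matching error $\|\sqrt{v_{t_0}}\sum_{i\le n}Z_i-\sigma B(nt_0)\|$ is at most $t^{1/6+O(\epsilon)}$ uniformly in $n\le k$.

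It remains to handle within-block fluctuations. For each $n<k$ one has $\max_{s\in[nt_0,(n+1)t_0]}\|\tilde W(s)-\tilde W(nt_0)\|\le t_0^{1/2+3\epsilon}=t^{1/3+2\epsilon}$ by Lemma~\ref{lem:traveling} applied to $W_{n+1}$ together with a union bound, and symmetrically $\max_{s\in[nt_0,(n+1)t_0]}\|\sigma B(s)-\sigma B(nt_0)\|\le C\sqrt{t_0\log k}\le t^{1/3}\sqrt{\log t}$ by a standard reflection estimate with a union bound over blocks. Summing the four error contributions (concatenation, KMT at the grid, Gaussian covariance matching, within-block fluctuations of both processes) yields $\max_{s\le t}\|W(s)-\sigma B(s)\|\le t^{1/3+4\epsilon}$ outside a set of probability $Ce^{-c\log^2 t}$, as desired. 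The main obstacle is the KMT step itself: the increments are lattice-valued (so one must quote a KMT version tolerating non-continuous distributions) and their covariance matches $\sigma^2 t_0 I_d$ only up to the slack from Proposition~\ref{prop:var}, but both issues are standard and are dealt with by the truncation (giving exponential moments) and the explicit scalar Gaussian comparison above.
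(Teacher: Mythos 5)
Your proposal follows essentially the same route as the paper: split into $k\approx t^{1/3}$ independent blocks of length $\lfloor t^{2/3}\rfloor$, truncate the block increments, apply a multidimensional KMT/Zaitsev strong approximation to the block sums, interpolate to a Brownian motion, control within-block fluctuations of both processes, and finally swap the empirical standard deviation for $\sigma$ using Proposition~\ref{prop:var}. The decomposition, the error budget, and the covariance-matching step are all the same as in the paper's proof.

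There is, however, one genuine quantitative gap as written: the lemma asserts failure probability $Ce^{-c\log^{2}t}$, which is superpolynomially small in $t$, while several of your steps are only claimed with polynomially small failure probability. Concretely: (i) you state the KMT approximation holds ``off a set of small (polynomial-in-$k$) probability''; (ii) the bound $\max_{n\le k}\|\sum_{i\le n}Z_i\|\le C\sqrt{k\log k}$ fails with probability only polynomially small in $k$; (iii) the within-block Brownian bound $C\sqrt{t_0\log k}$ likewise only gives a polynomial failure probability after the union bound over blocks. Since $k^{-A}=t^{-A/3}\gg e^{-c\log^{2}t}$, the union of these bad events does not yield the stated estimate, and this is not cosmetic: the superpolynomial bound is exactly what survives the union bounds over polynomially many boxes and times downstream (e.g.\ in Lemma~\ref{lem:virgin relaxed} and in the use of Lemma~\ref{lem:3.3} inside the proof of Lemma~\ref{lem:Cap-Vol}). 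The repair is easy and your error budget already allows it: after normalising the (truncated) increments so that they are bounded by roughly $t^{2\epsilon}$, Zaitsev's theorem gives a coupling whose deviation probability at level $t^{3\epsilon}$ is $Ce^{-ct^{\epsilon}}$ (this is what the paper does), and similarly one should take the Gaussian deviation thresholds at scale $\sqrt{k}\,\log t$ and $\sqrt{t_0}\,\log t$ rather than $\sqrt{k\log k}$ and $\sqrt{t_0\log k}$, so that every bad event has probability at most $e^{-c\log^{2}t}$ while all errors stay below $t^{1/3+4\epsilon}$. A last, minor, imprecision: truncation at level $t_0^{1/2+3\epsilon}$ does not give \emph{uniform} exponential moments (the scale grows with $t$), and it perturbs the variance, so one should either phrase the KMT input in terms of the explicit bound $M=t_0^{1/2+3\epsilon}$ (as your error term implicitly does) or, as the paper does, scale by the standard deviation of the truncated variable and absorb the exponentially small discrepancy when replacing it by $\sigma$.
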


\begin{proof}
Let $W_1,\dots, W_k$ be independent ORRW of length $t':=\lfloor t^{2/3} \rfloor $ with $k$ being $\lceil t/t' \rceil \approx t^{1/3}$. Let $\tilde W$ be the concatenation of $W_1,\dots , W_k$ and let $W$ be an ORRW of length $t$. Define the random variables 
\begin{equation*}
    X_i:=W_i(t') \mathds 1 \{\|W_i(t')\| \le t^{1/3+2\epsilon }\} \quad \text{and} \quad Y_i:=X_i/{\rm Std}((X_i)_1) .
\end{equation*}
By the symmetries of the lattice, $\mathbb E [Y_i]=0$ and the covariance matrix of $Y_i$ is the identity. Let $S_i:=\sum _{j=1}^{i}Y_j$. By Zaitsev's version of KMT \cite[Theorem~1.3]{zaitsev1998multidimensional} using that $\|Y_i\|\le t^{2\epsilon }$ we obtain that there is a standard Brownian motion $B'$ in $\mathbb R^d$ such that  
\begin{equation*}
    \mathbb P \Big( \max _{i\le k} \|S_i-B'(i)\| \ge t^{3\epsilon }   \Big) \le Ce^{-ct^\epsilon}.
\end{equation*}
To aid the reader in understanding Zaitsev's setup, let us mention that he defines a family ${\mathcal A}_d(\tau)$ of distributions \cite[Definition 1.2]{zaitsev1998multidimensional}. It is straightforward to check that a variable bounded by $M$ belongs to ${\mathcal A}_d(C(d)M)$. Zaitsev's $L(x)$ is $\max(1,\log x)$ and his $\Delta$ is defined on the first page of \cite{zaitsev1998multidimensional}.

Let $B$ be the standard Brownian motion defined by the scaling $B(s):=\sqrt{t'}B'(s/t')$ and let $\sigma ':= {\rm Std}(X_i)_1/\sqrt{t'}$. Then
\[
\mathbb P \bigg( \max _{i\le k} \Big\| \sum _{j\le i} X_j-\sigma ' B(it') \Big\| \ge {\rm Std}(X_i)t^{3\epsilon }   \bigg) \le Ce^{-ct^\epsilon}.
\]
and since ${\rm Std}(X_i)\le 1.1t^{1/3}/\sqrt{d}<t^{1/3}$ (Proposition~\ref{prop:var}) we can conclude that 
\begin{equation}\label{eq:0}
    \mathbb P \bigg( \max _{i\le k} \Big\| \sum _{j\le i} X_j-\sigma ' B(it') \Big\| \ge t^{1/3+3\epsilon }   \bigg) \le Ce^{-c\log ^2t}.
\end{equation}
Let $\Omega $ be the event that for all $i\le k$ we have that 
\begin{equation*}
    \max _{s\le t'} \|W_i(s)\| \le t^{1/3+2\epsilon } \quad \text{and} \quad \max _{s\in [(i-1)t',it']} \|B(s)-B((i-1)t')\|\le t^{1/3+2\epsilon }. 
\end{equation*}
By Lemma~\ref{lem:traveling} and a standard Brownian motion estimate we have that $\mathbb P (\Omega ) \ge 1-Ce^{-c\log ^2t}$. On the event $\Omega $ we have that $\sum _{j=1}^iX_j=\tilde W (it')$ for any $i\le k$ and therefore on $\Omega $ and the complement of the event in \eqref{eq:0} we have that $\max _{s\le t}\|\tilde W(s)-\sigma 'B(s)\| \le 2t^{1/3+3\epsilon }$. Thus, using also Corollary~\ref{cor:concat} we obtain that 
\begin{equation}\label{eq:5}
    \mathbb{P} \Big( \max _{s\le t}\|W(s)-\sigma ' B(s)\|\ge 2t^{1/3+3\epsilon } \Big) \le Ce^{-c\log^{2}t}.
\end{equation}
We would like to replace $\sigma '$ with $\sigma =\sqrt{v_T/T}$. We have that 
\begin{equation*}
    |\sigma -\sigma '| \le \big| \sigma - \sqrt{v_{t'} /t'} \big| + \big| \sqrt{v_{t'} /t'}  -\sigma ' \big|.
\end{equation*}
By Proposition \ref{prop:var},
\[
\Big|\frac{v_{t'}}{t'}-\sigma^2\Big|\le C(t')^{-1/2+5\epsilon}\le C t^{-1/3 +4\epsilon }
\]
so $|\sigma-\sqrt{v_{t'}/t'}|\le Ct^{-1/3+4\epsilon}$. As for the second term, by the fact that $X_i=W_i(t')$ with probability at least $1-Ce^{-c\log ^2t}$ we get that $|\sqrt{v_{t'}/t'}-\sigma'|<Ce^{-c\log^2t}$. Altogether we get $|\sigma-\sigma'|\le Ct^{-1/3+4\epsilon}$. Since $\max _{s\le t}\|B(s)\|\le t^{1/2+\epsilon }$ with very high probability, we may safely replace $\sigma '$ with $\sigma $ in equation \eqref{eq:5} to complete the proof of the lemma.
\end{proof}

As in Corollary~\ref{cor:3.2}, we can get a coupling with Brownian motion that becomes better in the beginning at the expense of the probability bound.

\begin{lem}\label{lem:3.3}
    For any $t\le T$, there is a coupling of $W$ and a standard Brownian motion $B$ such that 
    \begin{equation*}
        \mathbb P \big( \exists s\le T, \ \|W(s)-\sigma B(s)\| \ge (t+s)^{1/3+5\epsilon } \big) \le Ce^{-c\log ^2t}.
    \end{equation*}
\end{lem}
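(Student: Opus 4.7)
The plan is to mirror the dyadic-scale argument of Corollary~\ref{cor:3.2}, but applied to the KMT coupling of Lemma~\ref{lem:coupling} instead of to the concatenation coupling. The key difference from Corollary~\ref{cor:3.2} is that the Brownian motion produced by Lemma~\ref{lem:coupling} depends on the scale at which the lemma is invoked, so we cannot simply apply it at dyadic scales and union bound over them. Instead, the plan is to rerun the \emph{construction} inside Lemma~\ref{lem:coupling} once, but with the block size fixed at $t'=\lfloor t^{2/3}\rfloor$ (tied to our parameter $t$, not to the total time $T$), and with $k=\lceil T/t'\rceil$ blocks covering all of $[0,T]$, producing a single Brownian motion $B$ that is good for all $s\le T$.

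The key step is to check that the three sources of error in Lemma~\ref{lem:coupling} behave well at this intermediate block size. First, Zaitsev's KMT applied to the normalised block increments $Y_i = X_i/\mathrm{Std}((X_i)_1)$, which are bounded by $t^{2\epsilon}$, gives $\max_{i\le k}\|S_i-B'(i)\|\le t^{3\epsilon}$ with probability at least $1-Ce^{-ct^{\epsilon}}$ uniformly in $k$ (this is where I use that Zaitsev's tail is subexponential in $z/M$, so increasing $k$ does not cost us much). Rescaling gives block-endpoint error $t^{1/3+3\epsilon}$. Second, the concatenation error is controlled by Corollary~\ref{cor:3.2} with $t_{\textrm{Cor}}=T$ and $t_{1,\textrm{Cor}}=t'$, yielding $\|W(s)-\tilde W(s)\|\le (t^{2/3}+s)^{3\epsilon}$ uniformly for $s\le T$. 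Third, the intra-block drift of both $W$ and $B$ is $O((t')^{1/2+\epsilon})=O(t^{1/3+\epsilon})$ by Lemma~\ref{lem:traveling} and a standard Brownian estimate. Summing these gives $\|W(s)-\sigma' B(s)\|\le \tfrac{1}{2}(t+s)^{1/3+5\epsilon}$ for all $s\le T$ on an event of probability at least $1-Ce^{-c\log^{2}t}$, where $\sigma'=\mathrm{Std}((X_i)_1)/\sqrt{t'}$.

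The hard part is the final calibration step, replacing $\sigma'$ by $\sigma=\sqrt{v_T/T}$. Following the argument at the end of the proof of Lemma~\ref{lem:coupling}, Proposition~\ref{prop:var} at scale $t'=t^{2/3}$ gives $|\sigma-\sigma'|\le Ct^{-1/3+4\epsilon}$, and multiplying by $\|B(s)\|\le s^{1/2+\epsilon}$ produces an error of order $t^{-1/3+4\epsilon}s^{1/2+\epsilon}$, which is dominated by $(t+s)^{1/3+5\epsilon}$ only for $s$ up to roughly $t^{2}$. For larger $s$ I would remedy this either by enlarging the block size so that $|\sigma-\sigma'|$ decays faster at the expense of a slightly worse error at small $s$ (absorbing the loss into the extra $\epsilon$ buffer between the $1/3+4\epsilon$ of Lemma~\ref{lem:coupling} and the $1/3+5\epsilon$ we are aiming for), or by performing the KMT construction at several block scales and splicing the resulting Brownian motions, invoking Lemma~\ref{lem:coupling} at the matching scale for each range of $s$. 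This splicing step is the part that will require the most care.
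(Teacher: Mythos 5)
Your primary plan (a single KMT coupling with fixed block size $t'=\lfloor t^{2/3}\rfloor$ and $k\approx T/t'$ blocks) is genuinely different from the paper's proof and, as you already half-suspect, does not close cleanly. In fact it has \emph{two} independent gaps, only one of which you flag.

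The gap you did not flag is in the concatenation step. You invoke Corollary~\ref{cor:3.2} with $t_{1}=t'$ to claim $\|W(s)-\tilde W(s)\|\lesssim (t'+s)^{3\epsilon}$ uniformly in $s$, but Corollary~\ref{cor:3.2} controls the error between $W$ and a \emph{two-block} concatenation (first piece of length $t_1$, second piece of length $t-t_1$). The $\tilde W$ you need for the Zaitsev step is the \emph{$k$-block} concatenation $W_1\cdots W_k$ of equal blocks of length $t'$. The available estimate for that object is Corollary~\ref{cor:concat}, and the bound adapted to a time $s\approx jt'$ is of order $j(jt')^{2\epsilon}\approx (s/t')\,s^{2\epsilon}$. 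Comparing with the target $(t+s)^{1/3+5\epsilon}$, this works for $s\lesssim t$ but fails catastrophically for $s\gg t$ (e.g.\ at $s=t^2$ the error is $t^{2+4\epsilon}/t^{2/3}=t^{4/3+4\epsilon}$ against a target of $t^{4/3+10\epsilon}$, and it only gets worse as $s$ grows, since the left-hand side grows linearly in $s$ while the right-hand side grows like $s^{1/3}$). Equal-size blocks simply do not give an error that adapts to the required $(t+s)^{1/3+5\epsilon}$ scaling; this is a structural issue with the decomposition, not a technical one to be patched.

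The calibration gap you do flag (that $|\sigma-\sigma'|\,\|B(s)\|\approx t^{-1/3+4\epsilon}s^{1/2+\epsilon}$ only beats $(t+s)^{1/3+5\epsilon}$ for $s\lesssim t^2$) is correct, and your second proposed remedy — ``performing the KMT construction at several block scales and splicing the resulting Brownian motions, invoking Lemma~\ref{lem:coupling} at the matching scale for each range of $s$'' — is, in essence, what the paper actually does, and it handles both gaps at once. Concretely, the paper decomposes $[0,T]$ into consecutive dyadic intervals of lengths $t_i=2^{i-1}t$ (so $s_i=t_1+\cdots+t_i$), takes independent walks $W_1,\dots,W_k$ of those lengths, invokes Lemma~\ref{lem:coupling} \emph{as a black box} on each $W_i$ to get a Brownian motion $B_i$ with error $t_i^{1/3+4\epsilon}$, and concatenates the $B_i$. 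The concatenation error of the $\tilde W$ is then $s_1^{2\epsilon}+\cdots+s_i^{2\epsilon}$, which is a geometric sum dominated by its last term (rather than growing linearly as with equal blocks), and the Brownian error $t_1^{1/3+4\epsilon}+\cdots+t_i^{1/3+4\epsilon}$ is likewise geometric. Both are then $\lesssim s_i^{1/3+4\epsilon}\lesssim (t+s)^{1/3+5\epsilon}$ for $s\in[s_{i-1},s_i]$. This also sidesteps any worry about running Zaitsev's inequality on a number of blocks as large as $T/t^{2/3}$, which your version would need and which is problematic when $t$ is small relative to $T$. So the recommendation is to drop the single-scale construction entirely and instead carry out the dyadic splicing directly, using Lemma~\ref{lem:concat} inductively on the dyadic partial sums for the walk side and Lemma~\ref{lem:coupling} on each dyadic piece for the Brownian side — don't try to reconstruct the KMT argument from scratch.
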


\begin{proof}
    Let $k$ be the first integer for which $t\sum _{i=1}^k2^{i-1}\ge T$. For any $i\in [1,k-1]$ let $t_i:= 2^{i-1}t$ and let $t_k:=T-t_1-\cdots -t_{k-1}$ be the remainder. Let $W_1,\dots ,W_k$ be independent ORRW of lengths $t_1,\dots ,t_k$ respectively and let $\tilde W$ be their concatenation of length $T$. In the natural coupling of $W$ and $\tilde W$, it follows by induction and Lemma~\ref{lem:concat} that for all $i\le k$ we have 
\begin{equation}\label{eq:3}
        \mathbb P \Big( \max _{s\le s_i} \|W(s)-\tilde W (s)\| \ge s_1^{2\epsilon }+\cdots +s_i^{2\epsilon }  \Big) \le \sum _{j=1}^i Ce^{-c\log ^2s_j} ,
    \end{equation}
    where $s_j:=t_1+\dots +t_j$. Next, let $B_1,\dots B_k$ be the independent Brownian motions from Lemma~\ref{lem:coupling} applied for the walks $W_1,\dots ,W_k$ respectively. The Brownian motion $B$ given by the concatenation of $B_1,\dots ,B_k$ satisfies for all $i\le k$
    \begin{equation*}
        \mathbb P \Big( \max _{s\le s_i} \|\tilde W (s)-\sigma B(s)\| \ge t_1^{1/3+4\epsilon }+\cdots +t_i^{1/3+4\epsilon }  \Big) \le \sum _{j=1}^i Ce^{-c\log ^2t_j}.
    \end{equation*}
    The lemma follows from this estimate together with \eqref{eq:3} and a union bound over $i\le k$.
\end{proof}

\subsection{Transition probabilities}

In the following lemma we establish Assumption~\hyperref[h1]{\bf{(H1)}} on transition probabilities at time $T$.

\begin{lem} \label{lem:H1}
For any $v\in \mathbb Z ^d$ we have that 
\begin{equation}\label{eq:transition}
    \mathbb P (W(T)=v)\le \min \big( T^{-d/2+\nu }, \|v\|^{-d+2\nu } \big).
\end{equation}
\end{lem}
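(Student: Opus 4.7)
My plan is to write $W(T)$ as (approximately) the concatenation of two very unequal ORRW pieces, use the inductive hypothesis \hyperref[h1]{\bf{(H1)}} on the short piece and the Brownian coupling of Lemma~\ref{lem:coupling} on the long piece, and multiply the two ingredients so that the resulting pointwise bound on the concatenation beats $T^{-d/2+\nu}$ by enough to absorb the cost of passing from a small ball back to a single point. The numerical balance behind the choice of scales is exactly the relation $\epsilon\ll\nu$ flagged in Remark~\ref{rem:exponents}.

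First I would dispose of large $v$: when $\|v\|\ge T^{1/2+3\epsilon}$, Lemma~\ref{lem:traveling} gives $\mathbb P(W(T)=v)\le Ce^{-c\log^{2}T}$, which is super-polynomially small and hence smaller than both claimed bounds. For $\|v\|\le T^{1/2+3\epsilon}$ I would set $T_{2}:=\lfloor T^{2/3+10\epsilon}\rfloor$ and $T_{1}:=T-T_{2}$, let $W_{1},W_{2}$ be independent ORRW of lengths $T_{1},T_{2}$, and let $\tilde W$ be their concatenation. Lemma~\ref{lem:concat} then gives
\[
\mathbb P(W(T)=v)\le\mathbb P\big(\tilde W(T)\in B(v,T^{2\epsilon})\big)+Ce^{-c\log^{2}T},
\]
writing $B(v,r):=\{w\in\mathbb Z^{d}:\|w-v\|\le r\}$.

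To bound the right-hand side I would condition on $W_{1}(T_{1})=u$ and set $R:=2T_{2}^{1/2+3\epsilon}$. The contribution from $\|v-u\|>R$ is super-polynomially small by Lemma~\ref{lem:traveling} applied to $W_{2}$. For $\|v-u\|\le R$ the inductive hypothesis \hyperref[h1]{\bf{(H1)}} at time $T_{2}<T$ gives $\mathbb P(W_{2}(T_{2})=w)\le T_{2}^{-d/2+\nu}$ pointwise, so summing over the $\le CT^{2d\epsilon}$ lattice points of $B(v-u,T^{2\epsilon})$ yields
\[
\mathbb P\big(\tilde W(T)\in B(v,T^{2\epsilon})\big)\le CT^{2d\epsilon}T_{2}^{-d/2+\nu}\cdot\mathbb P\big(W_{1}(T_{1})\in B(v,R)\big)+Ce^{-c\log^{2}T}.
\]
The choice of $T_{2}$ is arranged so that $R\approx T^{1/3+7\epsilon}$ dominates the Brownian coupling error $T_{1}^{1/3+4\epsilon}$ from Lemma~\ref{lem:coupling}. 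Combining with the standard Gaussian density estimate then gives
\[
\mathbb P\big(W_{1}(T_{1})\in B(v,R)\big)\le CR^{d}T^{-d/2}\exp(-c\|v\|^{2}/T)+Ce^{-c\log^{2}T}.
\]

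Multiplying the two factors, the polynomial exponent of the main term becomes
\[
2d\epsilon+\alpha(\nu+3d\epsilon)-d/2=-d/2+2\nu/3+4d\epsilon+O(\epsilon\nu+d\epsilon^{2}),\quad\alpha=2/3+10\epsilon,
\]
which is strictly below $-d/2+\nu-\beta$ for some fixed $\beta>0$ because $4d\epsilon\ll\nu/3$. The prefactor $C$ is then absorbed using $T\ge a^{-1/(2d)}$ with $a$ small. For $\|v\|\le T^{1/2}$ the exponential factor is $O(1)$ and we recover the first term of the $\min$; for $T^{1/2}\le\|v\|\le T^{1/2+3\epsilon}$ the Gaussian tail $\exp(-c\|v\|^{2}/T)$ trades the remaining slack for the second term $\|v\|^{-d+2\nu}$, finishing the proof.

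The main difficulty is this balancing act: $T_{2}$ must be large enough that the Brownian coupling error on $W_{1}$ is dwarfed by the spread of $W_{2}$ (forcing $\alpha>2/3+O(\epsilon)$), yet the resulting exponent $\alpha(\nu+3d\epsilon)+2d\epsilon$ has to remain strictly below $\nu$. This window is non-empty only because $\epsilon$ is chosen much smaller than $\nu$, which is the numerical content of Remark~\ref{rem:exponents}.
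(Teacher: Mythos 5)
Your proposal is correct and follows essentially the same route as the paper: write $W(T)$ as a concatenation of a long piece and a much shorter piece of length $T_2\ll T$, bound the short piece pointwise via the inductive Assumption~\hyperref[h1]{\bf{(H1)}} at time $T_2$, bound the long piece via the Brownian coupling of Lemma~\ref{lem:coupling}, and then pay a factor $T^{2d\epsilon}$ to undo the concatenation error of Lemma~\ref{lem:concat}, finishing with Lemma~\ref{lem:traveling} for very large $\|v\|$. The only cosmetic differences are that the paper takes the less-tight split $t_2=\lfloor T^{4/5}\rfloor$ rather than your near-optimal $T^{2/3+10\epsilon}$, and that the paper drops the Gaussian factor $\exp(-c\|v\|^2/T)$ and instead pushes the threshold for invoking Lemma~\ref{lem:traveling} up to $\|v\|\ge T^{1/2+\nu/(20d)}$ (which works because $3\epsilon<\nu/(20d)$), whereas you retain the Gaussian tail to cover the range $\|v\|\in[T^{1/2},T^{1/2+3\epsilon}]$; both variants close, and both hinge on the same relation $\epsilon\ll\nu$.
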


\begin{proof}
Let $t_2:=\lfloor T^{4/5} \rfloor $ and $t_1:=T-t_2$. Let $W_1,W_2$ be independent ORRW of lengths $t_1,t_2$ respectively and let $\tilde W$ be their concatenation of length $T$. Let $R:=T^{2/5+3\epsilon }$ and let $u\in \mathbb Z^d$. Summing over the values of $w=W_2(t_2)$ at a reasonable distance from $u$ we obtain
\begin{equation*}
\begin{split}
    \mathbb P \big( &\tilde W(T)=u \big) \le \mathbb P \big( \|W_2(t_2)\|\ge R \big) + \sum _{\|w\|\le R} \mathbb P (W_1(t_1)=u-w) \cdot \mathbb P (W_2(t_2)=w) \\
    &\le Ce^{-c\log ^2T} + t_2^{-d/2+\nu } \mathbb P \big( \|W_1(t_1)-u\| \le R \big) \le Ct_2^{-d/2+\nu } R^d T^{-d/2}\le CT^{-d/2+0.8\nu 
 +3d\epsilon },
\end{split}
\end{equation*}
where in the second inequality we used Lemma~\ref{lem:traveling} to bound the first term and Assumption~\hyperref[h1]{\bf{(H1)}} at time $t_2$ to bound the second term. The third inequality follows from the Brownian approximation since $\mathbb P ( \|W_1(t_1)-u\| \le R ) \le \mathbb P ( \|\sigma B(t_1)-u\| \le 2R ) +Ce^{-c\log ^2T} \le CR^dT^{-d/2}$, where $B$ is the Brownian motion from Lemma~\ref{lem:coupling} with $t=t_1$. 

Next, let $r:=T^{2\epsilon }$ and let $\Omega := \{\max _{s\le T}\|W(s)-\tilde W(s)\| \le r\}$ be the event that the concatenation works. Using Lemma~\ref{lem:concat} and the bound on  $\mathbb P ( \tilde W(T)=u )$ we obtain
\begin{equation*}
    \mathbb P (W(T)=v) \le \mathbb P (\Omega ^c) +\mathbb P \big( \| \tilde W(T)-v\|\le r \big) \le e^{-c\log ^2T} +Cr^dT^{-d/2+0.8\nu 
 +3d\epsilon }\le T^{-d/2+0.9\nu },
\end{equation*}
where in the last inequality we used that $\epsilon <\nu /(50d)$ and that $T$ is sufficiently large (recall that $T>a^{-1/(2d)}$ and $a$ is sufficiently small). This finishes the proof of \eqref{eq:transition} when $\|v\|\le T^{1/2+\nu /(20d)}$ since then $T^{-d/2+0.9\nu }\le \|v\|^{-d+2\nu }$. When $\|v\|\in [T^{1/2+\nu /(20d)},T]$, we can use Lemma~\ref{lem:traveling} (since $\epsilon <\nu /(60d)$) to obtain
\[
\mathbb P(W(T)=v)\le\mathbb P(\|W(T)\|\ge T^{1/2+3\epsilon})\le Ce^{-c\log^2T}
\le \min(T^{-d/2+\nu},\|v\|^{-d+2\nu}),
\]
as long as $T$ is large enough. Finally, when $\|v\| >T$ the bound \eqref{eq:transition} holds trivially since $\mathbb P (W(T)=v)=0$.
\end{proof}

\subsection{Escaping into a ball}

\begin{lem}\label{lem:3.4}
    Let $1<L\le \sqrt{T}$ and let $D(0,L):=\{y\in\mathbb Z ^d: \|y\|_2\le L\}$ be the Euclidean ball around $0$. Let $x\in \mathbb Z^d$ be a vertex in the outer vertex boundary of $D(0,L)$ and let $W_x$ be ORRW starting from $x$. Then
    \begin{equation*}
        \mathbb P \big( W_x [1,L^2]\subseteq D(0,L)\text{ and } \|W_x(L^2)\|_2 \le L/2 \big) \ge cL^{-1.01}.
    \end{equation*}
\end{lem}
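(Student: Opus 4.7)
The plan is to couple $W_x$ with a Brownian motion via Lemma~\ref{lem:coupling} and reduce to the standard Brownian escape-into-ball estimate, exploiting the $L^{0.01}$ slack in the target probability to absorb the polynomial-in-$L$ coupling errors.

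First I would condition on $W_x(1) = y \in D(0, L)$, which has probability $\ge 1/(2d)$ since $x$ has a neighbor in $D(0, L)$ and the first step in virgin environment is uniform on the neighbors of $x$. After this step $\|y\|_2 \in [L-1, L]$, and the subsequent walk is ORRW from $y$ with a single reinforced edge $\{x, y\}$; in $d \ge 6$ this single reinforcement is a negligible perturbation (a natural coupling with clean ORRW starting from $y$ agrees throughout $[0, L^2]$ with probability $1 - O(a)$, since any given vertex is visited $O(1)$ times in expectation). Next, apply Lemma~\ref{lem:coupling} with $t = L^2$ to couple the subsequent walk with a Brownian motion $\sigma B$ so that $\|W_x(1+s) - y - \sigma B(s)\|_2 \le L^{2/3+4\epsilon}$ uniformly on $s \le L^2-1$, with probability $\ge 1 - Ce^{-c\log^2 L}$.

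I would then define a Brownian event $E$: there exists $s_* \le L^{4/3+10\epsilon}$ such that $y + \sigma B(s) \in D(0, L - 2L^{2/3+4\epsilon})$ for all $s \in [s_*, L^2-1]$ and $\|y + \sigma B(L^2-1)\|_2 \le L/3$. On $E$ together with coupling success the buffer ensures $W_x(s) \in D(0, L)$ for all $s \in [s_*+1, L^2]$ and $\|W_x(L^2)\|_2 \le L/2$. By standard Dirichlet eigenfunction / Bessel-process estimates for Brownian motion in a ball, $\mathbb{P}(E) \ge cL^{-1}$: BM starting at depth $1$ reaches depth $\sim L^{2/3}$ by time $L^{4/3}$ with probability $\sim L^{-2/3}$ (gambler's ruin for the one-dimensional distance-from-origin process), then survives in the buffered ball for the remaining time with probability $\sim L^{-1/3}$, giving total $\sim L^{-1}$.

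The main obstacle is controlling $W_x$ during the initial phase $[1, s_*]$, where the $L^{2/3+4\epsilon}$ coupling error is too coarse to resolve which side of $\partial D(0, L)$ the walk lies on. Resolving this requires a delicate multi-scale chaining argument combining (a) the direct ORRW--SRW natural coupling at very short scales (where Lemma~\ref{lem:base}-style estimates give agreement with probability $1 - O(as)$ and standard SRW escape-into-ball estimates apply), (b) intermediate-scale BM couplings via Lemma~\ref{lem:3.3} with its time-dependent error $(t+s)^{1/3+5\epsilon}$ tuned to be sub-polynomial at each sub-scale, and (c) the geometric constraints on the BM path witnessing $E$, which keep the BM (and hence $W_x$, up to coupling error) close to $\partial D(0, L)$ throughout the initial window. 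The $L^{0.01}$ slack in the target probability provides the budget to absorb the errors from these nested couplings.
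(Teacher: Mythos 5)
Your overall strategy (get one step inside, treat the reinforced edge as a negligible perturbation, couple with Brownian motion, and use the fact that the Brownian escape-into-the-ball probability from depth $O(1)$ is of order $1/L$) is the same as the paper's, but the proposal stops exactly where the real work begins. You correctly identify that the coupling error $t^{1/3+O(\epsilon)}$ from Lemma~\ref{lem:coupling}/Lemma~\ref{lem:3.3} is useless during the initial phase, when the walk is at depth $O(1)$ from $\partial D(0,L)$, and you then write that resolving this ``requires a delicate multi-scale chaining argument'' --- but you never construct it. That chaining argument \emph{is} the proof: the paper runs an induction over scales $r_0=\lambda$, $r_{i+1}\in[r_i^{1.1},r_i^{1.2}]$, $r_k\approx L$, runs the concatenation/BM coupling only for time $t_i\approx r_i^2$ so that the coupling error $t_i^{1/3+4\epsilon}$ is dominated by the current depth $r_{i-1}$ (this is what forces $r_{i+1}\le r_i^{1.2}$), and at each stage pays a gambler's-ruin factor $\approx r_{i-1}/r_i$ (Lemma~\ref{lem:brownian}) for deepening from $r_{i-1}$ to $r_i$ while staying inside. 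None of these choices appear in your sketch, and items (a)--(c) as stated do not determine the scales, the per-scale events, or the error-versus-depth comparison.

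Moreover, your claim that ``the $L^{0.01}$ slack provides the budget to absorb the errors from these nested couplings'' is not a harmless bookkeeping remark --- it is the delicate quantitative point, and it fails for the most natural chaining. If you chain over geometrically growing scales (say $r_{i+1}=2r_i$), you have $k\approx\log L$ stages; the per-stage success probabilities are only bounded below by $c\cdot r_{i-1}/r_{i}$ with a constant $c<1$, so the accumulated constant losses give $c^{k}=L^{-C}$, which destroys the bound no matter how small the coupling-failure probabilities are. The paper's way out is the super-geometric scale growth $r_{i+1}\in[r_i^{1.1},r_i^{1.2}]$: then $k\approx\log\log L$, the constant losses total only a power of $\log L$, and the gambler's-ruin factors telescope to $r_0/r_k\approx 1/L$, yielding $cL^{-1.01}$. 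Without specifying this (or an equivalent mechanism), the proposal does not establish the stated lower bound. Two smaller points: the paper also has to ensure the walk never re-interacts with the initially reinforced edges, which it does by keeping all later stages at depth $\ge 2\lambda$ (your ``$1-O(a)$ by $O(1)$ expected visits'' argument can be repaired using Assumption \hyperref[h1]{\bf{(H1)}}, but as written it implicitly assumes a Green's-function bound you have not invoked); and the first stage must be started at a \emph{relaxed} time so that Lemma~\ref{lem:avoiding} applies, a step your outline skips.
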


\begin{proof}
Let $\lambda$ be some sufficiently large parameter to be fixed later.
We may assume that $L\ge L_{0}(\lambda)$ i.e.\ that $L$ is sufficiently
large as a function of $\lambda$, since the probability in Lemma
\ref{lem:3.4} is positive for $L>1$. Let $y\in D(0,L-\lambda)$ with $\|y-x\|_2 \le 2\lambda$.
Let $\gamma$ be the shortest path in $D(0,L)$ from $x$ to $y$
(except $\gamma(0)=x$ is not in $D(0,L)$). Of course, $\len\gamma\le C\lambda$. 

Let $\mathcal{A}:=\{\forall t\le \len \gamma, \ W_x(t)=\gamma(t)\}$ be the event that $W_{x}$ has its first $\len\gamma$
steps exactly following $\gamma$. Note that $\mathbb P (\mathcal A)\ge c_\lambda$. Let $(\bar W(t))_{t\ge0}$ be $(W_{x}(\len\gamma+t))_{t\ge0}$
conditioned on $\mathcal{A}$. Equivalently one may consider $\bar W$ not as conditioned ORRW
but as ORRW starting from $y$ and evolving in a non-virgin environment in which the edges of $\gamma$ are reinforced, and this is the point of view we will adopt. Let $W_y$ be an auxiliary ORRW starting
from $y$ in a virgin environment.  Consider the natural coupling between $\bar W[0,L^{2}-\len\gamma]$
and $W_y[0,L^{2}-\len\gamma]$.  

Let $r_{0},\dotsc,r_{k}$ satisfy that $r_{0}=\lambda$, $r_{i+1}\in[r_{i}^{1.1},r_{i}^{1.2}]$ and $r_{k}=\sqrt{\frac{1}{2}(L^{2}-\len\gamma)}$ (clearly,
such a sequence may be constructed if $\lambda$ is sufficiently large
and $L$ is sufficiently large depending on $\lambda$). For $i\ge1$, let $t_{i}:=\lfloor 2r_{i}^{2}\rfloor $ and let $\mathcal{G}_{i}:=\{\forall t\le t_i, \ \bar W (t)=W_y(t)\}$ be the event that the natural coupling
of $\bar W$ with $W_y$ succeeded up to time $t_{i}$. In other words, $\mathcal{G}_{i}$ is the event that
for every $t<t_i$ for which $W_y(t)\in\gamma$, the variable $U_t$ from Definition~\ref{def:natural} implies that the next step is identical: $W_y(t+1)=\bar W(t+1)$.

Define next $\mathcal{S}_{i}$ to be the event that $\mathcal{G}_{i}$
occurred, that $W_y[0,t_{i}]\subseteq D(0,L)$ and that $W_y(t_{i})\in D(0,L-r_{i})$.
For the proof of the lemma it is enough to lower bound $\mathbb{P}(\mathcal{S}_{k})$ since on $\mathcal S _k$ we have $\bar W[0,L^2-\len \gamma ]\subseteq D(0,L)$ and $\bar W(L^2-\len \gamma)\in D(0,L/2)$. We will do that by induction
on $i$, with a small difference in the proof of $\mathcal{S}_{1}$.

Let $W_{i}$ be an ORRW independent of $W_y[0,t_{i-1}]$ of length
$t_{i}-t_{i-1}$ starting from the origin (for $i=1$ define $t_{0}=0$
and remove the independence requirement). By Lemma~\ref{lem:coupling}
there is a coupling of $W_{i}$ and a Brownian motion $B$ with diffusive
constant $\sigma$, such that the event $\mathcal{B}_{i}:=\big\{\forall s\le t_{i}-t_{i-1},\ \|W_{i}(s)-B(s)\|\le t_{i}^{1/3+4\epsilon}\big\}$
satisfies $\mathbb{P}(\mathcal{B}_{i})\ge1-Ce^{-c\log^{2}t_{i}}$.
Here $B$ is also independent of $W_y[0,t_{i-1}]$. Define the event
$\mathcal E _{i}$ by 
\[
\big\{ W_y(t_{i-1})+B[0,t_{i}-t_{i-1}]\subseteq D(0,L-\tfrac{1}{2}r_{i-1})\text{ and }\|W_y(t_{i-1})+B(t_{i}-t_{i-1})\|_{2}\le L-\tfrac{5}{4}r_{i}\big\}.
\]
Assuming that $\mathcal{S}_{i-1}$ occurred we know that $W_y(t_{i-1})\in D(0,L-r_{i-1})$.
Inserting this into Lemma \ref{lem:brownian} with $R_{\textrm{Lemma \ref{lem:brownian}}}=L-\frac12r_{i-1}$, $\ell_{\textrm{Lemma \ref{lem:brownian}}}=\frac54 r_i$ and $M_{\textrm{Lemma \ref{lem:brownian}}}$ a sufficiently large constant gives $\mathbb{P}(\mathcal E _{i}\mid\mathcal{S}_{i-1})\ge cr_{i-1}/r_{i}$.

We can now estimate $\mathbb{P}(\mathcal{S}_{i})$. In the case $i=1$
we use Lemma~\ref{lem:avoiding} (with $t_{\textrm{Lemma~\ref{lem:avoiding}}}=\len\gamma$). As long as $a$ is sufficiently small depending on $\lambda $, the event $\mathcal{A}$ implies that the time $\len\gamma$
is relaxed for $W_{x}$. Thus, we get that the natural
coupling of $W_y[0,t_{1}]$ with $\bar W[0,t_{1}]$ succeeds with probability
at least $1-Ca^{1/9}$. By construction $t_1^{1/3+4\epsilon }\ll r_0$ for large $\lambda $ and therefore if in addition we have $\mathcal E _{1}$ and
$\mathcal{B}_{1}$, then $W_y[0,t_{1}]\subseteq D(0,L)$
and $W_y(t_{1})\in D(0,L-r_{1})$. Taking into consideration the probabilities
of these three events we get
\begin{equation}\label{eq:S_1}
    \mathbb{P}(\mathcal{S}_{1})\ge cr_{0}/r_{1}-Ca^{1/9}-Ce^{-c\log^{2}t_{1}}\ge cr_0/r_1,
\end{equation}
where the last inequality holds when $\lambda$ is sufficiently large and $a$ is sufficiently small.

If $i>1$ we apply Lemma~\ref{lem:concat} to concatenate $W_y[0,t_{i-1}]$
with $W_{i}$. Let $\tilde{W}$ be the concatenation of $W_y[0,t_{i-1}]$
and $W_{i}$ and let $\mathcal{C}_{i}:=\{\max_{s\le t_{i}}\|W_y(s)-\tilde{W}(s)\|\le t_{i}^{2\epsilon}\}$ be the event that $W_y$ and $W_{i}$ are concatenated successfully.
By Lemma~\ref{lem:concat} we have that $\mathbb{P}(\mathcal{C}_{i})\ge1-Ce^{-c\log^{2}t_{i}}$.
We claim that $\mathcal{S}_{i-1}\cap\mathcal{B}_{i}\cap\mathcal{C}_{i}\cap \mathcal E _{i}\subseteq \mathcal{S}_{i}$. Indeed, by construction $t_i^{1/3+4\epsilon }+t_i^{2\epsilon }\ll r_{i-1}$ for large $\lambda $ and therefore the intersection of events
implies that $W_y[t_{i-1},t_{i}]\subseteq D(0,L-2\lambda)$ and $W_y(t_{i})\in D(0, L-r_{i})$. Second, the condition in $\mathcal{S}_{i-1}$
that $\bar W(t)=W_y(t)$ for all $t\le t_{i-1}$ continues to hold in the time interval $[t_{i-1},t_i]$ because $W_y[t_{i-1},t_i]\subseteq D(0,L-2\lambda)$ so $\bar W$ does not interact with the edges in $\gamma $. Thus we get $\mathcal{S}_{i}$.
We conclude that 
\[
\mathbb{P}(\mathcal{S}_{i})\ge\mathbb{P}(\mathcal{S}_{i-1} \cap \mathcal E _i)-\mathbb{P}(\mathcal{B}_{i}^c)-\mathbb{P}(\mathcal{C}_{i}^c)\ge c\frac{r_{i-1}}{r_{i}}\mathbb{P}(\mathcal{S}_{i-1})-Ce^{-c\log^{2}t_{i}}.
\]
It follows from \eqref{eq:S_1}, the last estimate, and induction that for all $i$ we have $\mathbb{P}(\mathcal{S}_{i})\ge c^{i}/r_{i}$,
if $\lambda$ is sufficiently large (this is the last requirement
on $\lambda$ and we may now fix its value). Since $k\approx\log\log L$, we obtain that $\mathbb P (\mathcal S _k) \ge cL^{-1.01}$. This finishes the proof of the lemma since $\bar W(\cdot )$ is $W_x(\len \gamma +\cdot  )$ conditioned on $\mathcal A $, and since $\mathbb P(\mathcal A )\ge c_\lambda$.
\end{proof}

\section{The capacity estimate and heavy blocks}\label{sec:heavy}

In this section and the next we prove Assumption~\hyperref[h2]{\bf{(H2)}} at time $T$. While this assumption is formulated for the demon walk $\W$, there is still some important preparation work to be done for usual ORRW. Recall that $R=\lfloor\sqrt{T}/2\rfloor$. 

\begin{definition}
Let $A$ be a subset of $[-4R,4R]^d$. We say that $A$ is nowhere heavy if for all $u \in [-2R,2R]^d$ and $r\in [R^{\delta },R]$ the box $\Lambda :=u+[-r,r]^d$ satisfies
\begin{equation}\label{eq:nh}
    |A\cap \Lambda| \le r^{\kappa}.
\end{equation}
\end{definition}
Notice that, despite the imposing name, $A$ might be as heavy as possible near the boundary of the box --- the definition only controls its behaviour in $[-3R,3R]^d$. This is in line with the fact that Assumption~\hyperref[h2]{\bf{(H2)}} is only for $[-R,R]^d$. The demon might create uncontrollably bad behaviour near the boundary, so we always focus on the central parts of the box.

As explained in the introduction (\S\ref{sec:sketch}), the proof requires a specialised notion of capacity. Let us now define this precisely. (See, e.g., \cite[Section~6.5]{lawler2010random} for background on the standard random walk capacity. We remark that, unlike the usual capacity, our notion does not have a variational formula, and is not necessarily monotone in $A$).

\begin{definition}\label{def:esc}
  Let $A\subseteq [-4R,4R]^d$, let $u\in [-2R,2R]^d$ and $r\in[R^\delta, R]$. Let $\Lambda^+=u+[-2r,2r]^d$. For $z\in A\cap \Lambda^+ $, let $W_z$ be a once-reinforced walk starting from $z$ and let $\Omega_{\rm esc}^{\Lambda^+ }(z,A)$ be the event that $W_z$ successfully escaped from $A$ in $\Lambda^+$ which means that: 
\begin{enumerate}
    \item 
    For all $t\in [1,2R^2]$ we have that $W_z(t)\in[-4R,4R]^d\setminus A$.
    \item 
    For all $t\in [R^2,2R^2]$ we have that $W_z(t)\in [-R,R]^d$.
    \item 
    For all $t\in [40r^2,2R^2]$ we have that $W_z(t)\notin \Lambda^+ $.
\end{enumerate}
Define the once-reinforced capacity of $A$ in $\Lambda^+ $ to be 
\begin{equation*}
   {\rm Cap}_{\Lambda^+} (A):= \sum _{z\in A\cap \Lambda^+ } \mathbb P \big( \Omega _{\rm esc}^{\Lambda^+} (z,A) \big).
\end{equation*}
\end{definition}

In the next lemma, we bound the volume of $A\cap \Lambda $ by the capacity. 

\begin{lem}[Capacity-Volume bound]\label{lem:Cap-Vol}
  Let $A\subseteq [-4R,4R]^d$ and suppose that $A$ is nowhere heavy. Let $\Lambda=u+[-r,r]^d$ and $\Lambda^+=u+[-2r,2r]^d$. Then ${\rm Cap}_{\Lambda^+} (A)\ge r^{-6\nu }|A\cap \Lambda |^{1-2/d }$.
\end{lem}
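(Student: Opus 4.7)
I would adapt the classical probabilistic proof that $\mathrm{cap}(A)\gtrsim|A|^{1-2/d}$ for simple random walk, substituting Lemma~\ref{lem:avoiding} for the strong Markov property. Recall the three classical ingredients: (a) the Green's function bound $G(z,y)\le C\|z-y\|^{-d+2}$; (b) the equilibrium identity $\sum_{y\in A} G(z,y)\pi_y=1$ for $z\in A$, obtained by decomposing $W_z$ along its last visit to $A$; and (c) the packing bound $\max_y\sum_{z\in A}G(z,y)\le C|A|^{2/d}$. Each transfers to the ORRW setting at the cost of an $r^{O(\nu)}$ factor. Crucially, in Definition~\ref{def:esc} each summand uses a \emph{fresh} ORRW from $z$, so Assumption~\hyperref[h1]{\bf{(H1)}} applies directly; combined with translation invariance, $\mathbb P(W_z(t)=y)\le t^{-d/2+\nu}\wedge\|y-z\|^{-d+2\nu}$, and splitting the $t$-sum at $t\approx\|y-z\|^2$ yields
\[
G_{T'}(z,y):=\sum_{t=1}^{T'}\mathbb{P}(W_z(t)=y)\le C\|z-y\|^{-d+2+2\nu}\qquad(T'\le 2R^2).
\]

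The central step is the decoupled analogue of the equilibrium identity: for every $z\in A\cap\Lambda$,
\[
\sum_{y\in A\cap\Lambda^+} G_{T'}(z,y)\,\mathbb{P}\bigl(\Omega_{\rm esc}^{\Lambda^+}(y,A)\bigr)\ \ge\ c.
\]
I would start from the trivial identity $\sum_{y,t}\mathbb{P}(W_z(t)=y\in A,\,W_z[t+1,2R^2]\cap A=\emptyset)=1$ (since $W_z(0)=z\in A$, the walk a.s.\ has a last visit to $A$ in $[0,2R^2]$), restricted to \emph{relaxed} times $t$; Lemma~\ref{lem:virgin relaxed} shows that the non-relaxed contribution is negligible. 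For a relaxed $t$ with $W_z(t)=y$, Lemma~\ref{lem:avoiding} couples the continuation $(W_z(t+s))_{s\ge0}$ with a fresh independent ORRW $W_y'$ from $y$ outside an event of probability $\le Ca^{1/9}$, and on that coupling event the ``no return to $A$'' condition on $W_z$ is implied by the fresh-walk event $\Omega_{\rm esc}^{\Lambda^+}(y,A)$ on $W_y'$; its geometric conditions~(2)--(3) hold with positive probability by Lemma~\ref{lem:coupling} (Brownian approximation). Collecting $\sum_t\mathbb{P}(W_z(t)=y,\,t\text{ relaxed})\approx G_{T'}(z,y)$ gives the displayed lower bound. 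The nowhere-heavy hypothesis on $A$ enters to keep the trajectory of $W_z$ well-behaved so that Lemma~\ref{lem:virgin relaxed} applies uniformly.

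Summing this inequality over $z\in A\cap\Lambda$ and swapping sums gives
\[
c\,|A\cap\Lambda|\ \le\ \mathrm{Cap}_{\Lambda^+}(A)\cdot\max_{y}\sum_{z\in A\cap\Lambda}\|z-y\|^{-d+2+2\nu}.
\]
The trivial packing inequality (worst case: $A\cap\Lambda$ is packed in the ball of radius $|A\cap\Lambda|^{1/d}$ around $y$) bounds the maximum by $C|A\cap\Lambda|^{(2+2\nu)/d}\le C|A\cap\Lambda|^{2/d}r^{2\nu}$, using $|A\cap\Lambda|\le(2r+1)^d$. Rearranging then yields $\mathrm{Cap}_{\Lambda^+}(A)\ge r^{-6\nu}|A\cap\Lambda|^{1-2/d}$ once the constants are absorbed into the $r^{-6\nu}$ factor.

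The main obstacle is the decoupled equilibrium identity: for SRW it is immediate from strong Markov, whereas for ORRW one must simultaneously manage (i) that the last visit to $A$ can be taken at a relaxed time (via Lemma~\ref{lem:virgin relaxed} and the nowhere-heavy hypothesis), (ii) the coupling error $\le Ca^{1/9}$ in Lemma~\ref{lem:avoiding}, and (iii) transporting the geometric conditions~(2)--(3) of Definition~\ref{def:esc} through the coupling via Brownian approximation.
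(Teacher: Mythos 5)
Your skeleton (last-exit decomposition, a Green's-function bound from \hyperref[h1]{\bf{(H1)}}, the packing bound $\sum_t\min(1,|A\cap\Lambda|t^{-d/2})\le C|A\cap\Lambda|^{2/d}$) is indeed the classical template the paper starts from, but the step you call the ``decoupled equilibrium identity'' has a genuine gap, and it is exactly the step the paper replaces by a different mechanism. If you decouple the continuation of $W_z$ at each relaxed visit to $A$ via Lemma~\ref{lem:avoiding}, you pay an additive error $Ca^{1/9}$ conditionally on $\mathcal F_t$; after summing the last-visit decomposition over $(t,y)$ the total error is $Ca^{1/9}\,\mathbb{E}\big[\#\{t\le 2R^2:\ W_z(t)\in A\}\big]$. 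The nowhere-heavy hypothesis gives no control below scale $R^{\delta}$, so $A$ may contain a full box of side $R^{\delta\kappa/d}$ around $z$ and the expected number of visits is at least of order $R^{2\delta\kappa/d}\to\infty$; the accumulated error swamps the identity, whose total mass is $1$. Unlike SRW, where the factorization at the last visit is exact, a per-visit coupling cost of order $a^{1/9}$ cannot be summed over visits. A second gap: on the coupling event you only transfer ``the fresh walk avoids $A$'', while the capacity is built from the strictly smaller event $\Omega_{\rm esc}^{\Lambda^+}(y,A)$ with the geometric conditions (2)--(3) of Definition~\ref{def:esc}. To close the argument you need $\mathbb{P}(\text{fresh walk avoids }A)\le C\,\mathbb{P}(\Omega_{\rm esc}^{\Lambda^+}(y,A))$, i.e.\ a statement about the walk \emph{conditioned} on avoidance, and Lemma~\ref{lem:coupling} is an unconditional Brownian approximation -- it says nothing about that conditional law. (As written your implication also points the wrong way: ``$\Omega_{\rm esc}$ implies no return'' lower-bounds the no-return probability, whereas the identity requires upper-bounding it by $C\,\mathbb{P}(\Omega_{\rm esc})$.) A further minor issue: nothing guarantees the last visit to $A$ happens at a relaxed time; density of relaxed times does not control a specific random time.

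The paper never decouples $W_x$ at its last-visit time. It first proves that an a-priori event $\Omega(x)$ -- leave a neighbourhood of $A\cup\Lambda$ within $\tfrac12 r^2$ steps, stay at distance $>t^{4\epsilon}$ from $A$ and from $\Lambda^+$ afterwards, and end up spending time in $[-R,R]^d$ -- has probability $\ge c$ (this is where nowhere-heaviness, Lemma~\ref{lem:A} and the Brownian approximation enter, and where the analogues of conditions (2)--(3) are built in from the start rather than imposed after conditioning on avoidance). It then decomposes over the last visit to the $L$-neighbourhood of $A$ with $L=r^{7\epsilon}$, and manufactures the factorization through a three-walk coupling: $W_x$ is coupled to a fresh pair $(W_1,W_2)$ by Corollary~\ref{cor:3.2} (failure probability $e^{-c\log^2T}$, not $a^{1/9}$), and an escaping walk $W_z$ started from a point $z\in A\cap\Lambda^+$ on the sphere of radius $L$ around $y$ is coupled to $(W_1',W_2)$, with Lemma~\ref{lem:3.4} costing only $r^{-8\epsilon}$ for the initial crossing of $D(y,L)$. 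Independence of $W_1$ from $W_z$ then plays the role of the Markov property, the buffer $L$ absorbs the coupling errors, and every loss is either superpolynomially small or a factor $r^{O(\epsilon)+O(\nu)}$, which is what makes the stated $r^{-6\nu}$ bound reachable. To salvage your route you would have to decouple only once, at a single well-chosen time, and carry the escape geometry through a coupling rather than through conditioning on avoidance -- which is essentially the paper's construction.
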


The idea of one of the classical proofs of the volume capacity bound is as follows. For any point $x$ in $A$, contributing to the volume of $A$, we can release a simple random walk $W_x$ from $x$ and wait until it exits $A$ for the last time. Starting from this time, we obtain a walk escaping the boundary of $A$, contributing to the capacity of $A$. Estimating the amount of double counting in this argument leads to the quantitative bound ${\rm Cap} (A) \ge c|A|^{1-2/d}$. Our proof below is a version of this argument for ORRW.

\begin{proof}
  Since $r\ge R^\delta$, $R=\lfloor \sqrt{T}/2\rfloor$ and $T>a^{-1/(2d)}$, we may assume $r$ is sufficiently large. For $x\in A\cap \Lambda $ let $W_x$ be a once-reinforced walk starting from $x$. Define the events
    \begin{equation*}
    \begin{split}
        &\Omega _1(x):= \big\{\forall t\in [\tfrac12r^2,3R^2] \text{ with }W_x(t)\in [-\tfrac{8}{3}R,\tfrac{8}{3}R]^d \text{ we have }  d(W_x(t),A)> t^{4\epsilon } \big\}\\
        &\Omega _2(x) :=  \bigg\{  \!\! \begin{array}{cc}
        W_x [0,\frac{1}{2}r^2]\subseteq u+[-\frac{3}{2}r,\frac{3}{2}r]^d, \  \forall t\in [30r^2,3R^2], \  d(W_x(t),\Lambda ^+)> t^{4\epsilon }  \\
         W_x[\frac{1}{2}r^2,3R^2]\subseteq [-\frac{8}{3}R,\frac{8}{3}R]^d, \  W_x[\frac{3}{4}R^2,3R^2]\subseteq [-\frac{11}{12}R,\frac{11}{12}R]^d
    \end{array} \!\! \bigg\}
    \end{split}
    \end{equation*}
and $\Omega (x):=\Omega _1(x)\cap \Omega _2(x)$ (see Figure \ref{fig:blueA}, left and middle panes). The idea is that on $\Omega (x)$ the walk $W_x$ is forced to exit $A$ in time $r^2$, remain far from $A\cup \Lambda $ and spend time in $[-R,R]^d$. On this event we will construct ORRW escaping the boundary of $A$ and satisfying $\Omega _{\rm esc}^{\Lambda^+}(z,A)$. 

\begin{figure}[htp]
    \centering
\includegraphics[width=16.5cm]{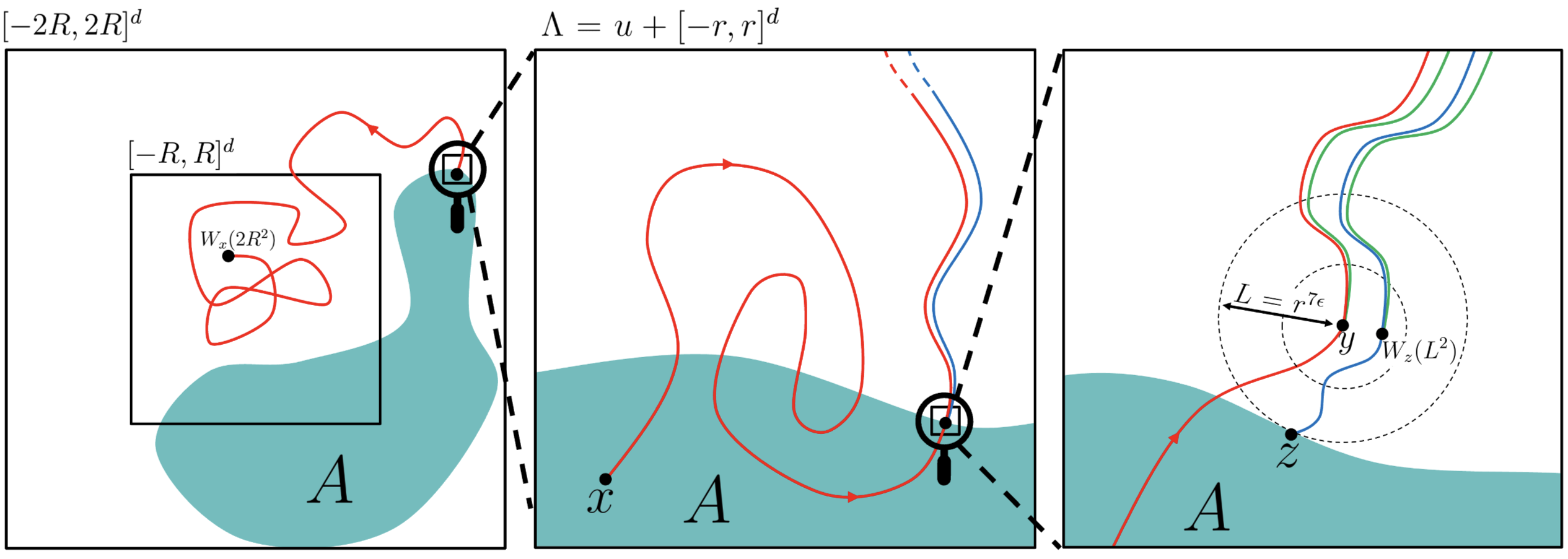}
\caption{The three scales in the proof of Lemma~\ref{lem:Cap-Vol}. The left pane shows the large scale ($R$): the walk $W_x$ starting from $x\in A\cap \Lambda $, exits $A\cup \Lambda $ quickly, doesn't get close again, enters the inner box $[-R,R]^d$, and spends $R^2$ time in there. The middle pane, at scale $r$, shows the last exit of the ORRW from $A$ and the path $W_z$ in blue. The right pane, at scale $r^{7\epsilon}$, shows the coupling in more detail: The red path is $W_x$ (and $W_1+x$, which is its section until $y$). The blue path is $W_z$ (and $W_1'+z$, which is its first section). The green paths are the two translations of $W_2$.
}\label{fig:blueA}
\end{figure}

We start by proving that $\mathbb P(\Omega (x))\ge c$. To bound the probability of $\Omega _1(x)$ we use that $A$ is nowhere heavy and Lemma~\ref{lem:A}. Let $k_0:=\lfloor \log _2(r/2) \rfloor $ and $k_1:=\lceil \log _2(2R) \rceil $. For any $k\in [k_0,k_1]$ define the set
\begin{equation}\label{eq:Ak}
    A_k:=\big\{ y\in [-\tfrac{8}{3}R,\tfrac{8}{3}R]^d : \exists z\in A \text{ with } \|z-y\|\le 2^{8\epsilon(k+1)} \big\}.
\end{equation}
Let now $L\ge 2^k$. Any $z$ as in \eqref{eq:Ak} must be in $[-3R,3R]^d$ for $R$ sufficiently large. Similarly, if the $y$ of \eqref{eq:Ak} is in $x+[-L,L]^d$ then $z$ must be in $u+[-7L,7L]^d$ because
\[
\|z-u\|_\infty\le\|z-y\|_\infty+\|y-x\|_\infty+\|x-u\|_\infty
\le 2^{8\eps (k+1)} + L +r \le 2L^{8\eps}+L+4\cdot2^{k_0}\le 7L.
\]
Hence
\begin{equation*}
  |A_k\cap (x+[-L,L]^d)|\le CL^{8\epsilon d}|A\cap [-3R,3R]^d \cap (u+[-7L,7L]^d) |
  \le L^{\kappa +\nu }.   
\end{equation*}
(The last inequality follows from \eqref{eq:nh}. We also used that $r$ is sufficiently large to get rid of the constant, and for $L\ge \frac17 R$ in fact applied \eqref{eq:nh} for a constant number of boxes and summed, since \eqref{eq:nh} only applies to boxes smaller than $R$). Thus, by Lemma~\ref{lem:A} with the walk $W_x-x$, the set $A_k-x$ and with $L_{\textrm{Lemma~\ref{lem:A}}}=2^k$ we have
\begin{equation}\label{eq:515}
\mathbb P \big( \exists t\in [4^k,T), \ W_x(t)\in A_k \big)\le C2^{-k/3}.
\end{equation}
Break the $t$ in the definition of $\Omega_1$ into scales, recall that $3R^2<T$  and get
\begin{equation}\label{eq:Omega_1}
  \mathbb P \big( \Omega _1(x)^c \big) \le \sum _{k=k_0}^{k_1} \mathbb P \big( \exists t\in [4^k,3R^2], \ W_x(t)\in A_k \big)
  \stackrel{\smash{\textrm{\eqref{eq:515}}}}{\le}
  \sum _{k=k_0}^{k_1} C2^{-k/3} \le Cr^{-1/3}.
\end{equation}
    Next, we use the Brownian approximation in order to show that $\mathbb P (\Omega _2(x))\ge c$. We use Lemma~\ref{lem:3.3} and get a coupling of $W_x$ with a Brownian motion $B$ with diffusive constant $\sigma ^2 \in [\frac{0.9}{d},\frac{1.1}{d}]$ starting from $x$ such that the event $\mathcal B :=\big\{\forall t\le 2R^2,\|W_x(t)-B(t)\|\le (t+r^2)^{1/3+5\epsilon  } \big\}$ holds with probability at least $1-e^{-c\log ^2r}$. Define the event
\begin{equation}\label{eq:brownian1}
        \Omega _2'(x) :=  \bigg\{  \!\! \begin{array}{cc}
        B [0,\frac{1}{2}r^2]\subseteq u+[-\frac{5}{4}r,\frac{5}{4}r]^d, \ \forall t\in [30r^2,3R^2], \  d(B(t),\Lambda ^+)> t^{2/5} \\ B[\frac{1}{2}r^2,3R^2]\subseteq [-\frac{7}{3}R,\frac{7}{3}R]^d, \  B[\frac{3}{4}R^2,3R^2]\subseteq [-\frac{10}{12}R,\frac{10}{12}R]^d 
    \end{array} \!\! \bigg\}.
    \end{equation}
A straightforward Brownian motion estimate shows that $\mathbb P (\Omega _2'(x) )\ge c$. For completeness we prove this in the appendix (Lemma \ref{lem:Om2p}). A simple check shows that $\mathcal B \cap \Omega _2'(x)\subseteq \Omega _2(x)$ for $r$ sufficiently large, completing the proof that $\mathbb P (\Omega _2(x))\ge c$. Combining this with \eqref{eq:Omega_1} finishes the proof that $\mathbb P (\Omega (x))\ge c$, since $r$ is sufficiently large.

Summing over $x\in A\cap \Lambda $ we obtain
\begin{equation*}
        |A\cap \Lambda | \le C\sum _{x\in A\cap \Lambda } \mathbb P (\Omega (x)).
\end{equation*}
We now wish to relate this last sum to the capacity. For this purpose, let $L:=r^{7\epsilon }$ and let $\mathcal T:=1+\max \{t\le 3R^2: d(W_x(t),A)\le L\}$. Since $\Omega(x)$ `prohibits' $W_x$ from getting too close to $A$ for large $t$, the maximum in the definition of $\mathcal T$ must be attained at small $t$. In other words, under $\Omega(x)$ we have 
    that  $\mathcal T\in [L,r^2/2]$ and $W_x(\mathcal T)\in S_L $ where
\[
S_L:= \big\{ y\in u+ [-\tfrac{3}{2}r,\tfrac{3}{2}r]^d : d(y,A)>L \text{ and }\exists y'\sim y, \ d(y',A)\le L  \big\}
\]
(see Figure \ref{fig:blueA}, right pane). We obtain that
\begin{equation}\label{eq:71}
    |A\cap \Lambda |\le C\sum _{x\in A\cap \Lambda } \sum _{t=1}^{r^2/2} \sum _{y\in S_L} \mathbb P \big( \Omega (x), \ \mathcal T=t, \ W_x(t)=y \big).
\end{equation}
Next, we fix $x\in A\cap \Lambda $, $t\in [L,r^2/2]$ and $y\in S_L$ and bound the last probability. Since $y\in S_L$, there exists some $z\in A\cap \Lambda ^+$ that is in the outer boundary of the Euclidean ball $D(y,L)$. Let $W_z$ be a once-reinforced walk starting from $z$ and let $\Omega _{\rm esc}^{\Lambda^+} (z,A)$ be the escape event defined above for the walk $W_z$. We couple $W_x$ and $W_z$ as follows (see Figure \ref{fig:blueA}, right pane). Let $W_1$, $W_2$ and $W_1'$  be three independent ORRW starting from the origin of lengths $t$, $2R^2$ and $L^2$ respectively. By Corollary~\ref{cor:3.2} the walk $W_x$ can be coupled with $W_1,W_2$ such that $W_x[0,t]-x=W_1[0,t]$ deterministically and the event
\begin{equation*}
    \mathcal C: =\Big\{ \forall s\le 2R^2, \ \|W_x(t+s)-W_x(t)-W_2(s)\| \le (t+s)^{3\epsilon } \Big\}.
\end{equation*}
satisfies $\mathbb P (\mathcal C ^c)\le Ce^{-c\log ^2t}\le e^{-c\log ^2T}$, where in the last inequality we used that $T\ge a^{-1/(2d)}$ is sufficiently large and $t\ge L=r^{7\epsilon }\ge T^c$. Similarly, we can couple $W_z$ with $W_1',W_2$ such that deterministically $W_z[0,L^2]-z=W_1'[0,L^2]$ and the event
\begin{equation*}
    \mathcal C': =\Big\{ \forall s\le 2R^2, \ \|W_z(L^2+s)-W_z(L^2)-W_2(s)\| \le (L^2+s)^{3\epsilon } \Big\}.
\end{equation*}
satisfies $\mathbb P (\mathcal C')\ge 1-e^{-c\log ^2T}$. Finally, by Lemma~\ref{lem:3.4} we have that $\mathbb P (\mathcal D) \ge r^{-8\epsilon }$ where 
\begin{equation*}
    \mathcal D := \big\{ W_z[1,L^2]\subseteq D(y,L) \text{ and }  \|W_z(L^2)-y\|\le L/2 \big\}.
\end{equation*}

We claim that 
\begin{equation}\label{eq:subset}
    \Omega (x) \cap \big\{ \mathcal T=t, W_x(t)=y \big\} \cap \mathcal C \cap \mathcal C' \cap \mathcal D \subseteq  \{W_1(t)=y-x\} \cap \Omega _{\rm esc}^{\Lambda^+} (z,A).
\end{equation}
To prove this, suppose that the intersection on the left hand side of \eqref{eq:subset} holds. In this case, the event $\{W_1(t)=y-x\}$ clearly holds and so it suffices to show that $\Omega _{\rm esc}^{\Lambda^+} (z,A)$ holds. On this intersection we have for all $s\le 2R^2$ that
\begin{equation}\label{eq:W_zW_x}
\begin{split}
    \|W_x(t+s)-W_z(L^2+s)\| &\le \|W_x(t)-W_z(L^2)\| +(t+s)^{3\epsilon } +(L^2+s)^{3\epsilon } \\
    &\le L/2+2(r^2+s)^{3\epsilon }. 
\end{split}
\end{equation} 
Now, on $\Omega (x)\cap \{\mathcal T=t\}$ we have $d(W_x(t+s),A)> L$ for all $s\in [0,2R^2]$ and $d(W_x(s),A)> s^{4\epsilon }$ for all $s\in[r^2/2,3R^2]$. It follows that $W_z(L^2+s)\notin A$ for all $s\le 2R^2$, for $r$ sufficiently large. By the definition of $\mathcal D$ we also have that $W_z(s)\notin A$ for $s\in [1,L^2]$.

The proof that $W_z$ avoids $\Lambda^+$ is similar. By the definition of $\Omega_2(x)$ we get that $d(W_x(s),\Lambda^+)\linebreak[0] >s^{4\epsilon}$ for all $s\in [30r^2,3R^2]$. With \eqref{eq:W_zW_x} we get
\begin{equation}\label{eq:WzLam}
d(W_z(s),\Lambda^+)>(s-L^2+t)^{4\epsilon}-L/2-2(r^2+s)^{3\epsilon},
\end{equation}
for $s\in[30r^2+L^2-t,3R^2+L^2-t]$. Under $\Omega(x)$ we have $t\in[L,r^2/2]$ and hence for $r$ sufficiently large this interval contains $[40r^2,2R^2]$ and the right-hand side of \eqref{eq:WzLam} is positive. Thus $W_z(s)\notin\Lambda^+$ for $s\in[40r^2,2R^2]$. A similar argument concludes from $W_x[0,3R^2]\subseteq[-\frac72 R,\frac72 R]^d$ that $W_z[0,2R^2]\subseteq [-4R,4R]^d$ (the claim on $W_x[0,3R^2]$ follows from the conditions on the first two time intervals in the definition of $\Omega_2(x)$ and from $u\in [-2R,2R]^d$). Finally, we conclude from $W_x[\frac34R^2,3R^2]\subseteq [-\frac{11}{12}R,\frac{11}{12}R]^d$ that $W_z[R^2,\linebreak[0]2R^2]\subseteq [-R,R]^d$. This finishes the proof of  \eqref{eq:subset}.

Using  \eqref{eq:subset} we obtain that 
\begin{equation*}
\begin{split}
  &\mathbb P \big( \Omega (x), \ \mathcal T=t, \ W_x(t)=y \big) \cdot \mathbb P (\mathcal D)- e^{-c\log ^2T}
  \le  \mathbb P \big( W_1(t)=y-x, \ \Omega _{\rm esc}^{\Lambda ^+} (z,A) \big) \\
  &= \mathbb P \big( W_1(t)=y-x \big) \cdot \mathbb P (\Omega _{\rm esc}^{\Lambda ^+} (z,A))
  \le  e^{-c\log ^2T} +\mathds 1_{\{\|x-y\| \le t^{1/2+3\epsilon  }\}} \cdot  t^{-d/2+\nu } \cdot \mathbb P (\Omega _{\rm esc}^{\Lambda ^+} (z,A)),
\end{split}
\end{equation*}
where in the first inequality we also used the bounds on the probabilities of $\mathcal C,\mathcal C'$ and the fact that $W_1'$ (and therefore $\mathcal D$) is independent of $W_x$. In the equality we used that $W_1$ is independent of $W_z$. Finally, in the last inequality we used Assumption~\hyperref[h1]{\bf{(H1)}} on transition probabilities together with Lemma~\ref{lem:traveling}.

Continuing, using the bound $\PP(\mathcal D)\ge r^{-8\epsilon}$ and $t\le \frac12r^2$ we obtain that 
\begin{equation}\label{eq:OmeTy_onex}
  \mathbb P \big( \Omega (x), \ \mathcal T=t, \ W_x(t)=y \big) \le
  e^{-c\log ^2T} +r^{3\nu } \mathds 1_{\{\|x-y\| \le t^{1/2+3\epsilon }\}} \cdot  t^{-d/2} \cdot \mathbb P (\Omega _{\rm esc}^{\Lambda^+} (z,A)).
\end{equation}
Next, note that
\[
\sum_{x\in A\cap \Lambda}\mathds 1_{\{\|x-y\| \le t^{1/2+3\epsilon }\}}t^{-d/2}
\le C\min\{t^{3d\epsilon}, |A\cap \Lambda|t^{-d/2}\},
\]
where the bound $t^{3d\epsilon}$ comes simply from the condition $\|x-y\| \le t^{1/2+3\epsilon }$. Thus, summing \eqref{eq:OmeTy_onex} over $x\in A\cap \Lambda $ we get
\begin{equation*}
  \sum _{x\in A\cap \Lambda } \mathbb P \big( \Omega (x), \ \mathcal T=t, \ W_x(t)=y \big)
  \le e^{-c\log ^2T} +r^{4\nu } \cdot \min \big( 1,|A\cap \Lambda |t^{-d/2} \big) \cdot \mathbb P (\Omega _{\rm esc}^{\Lambda^+} (z,A)).
\end{equation*}
Substituting this bound into \eqref{eq:71} we obtain
\begin{equation}\label{eq:72}
\begin{split}
  |A\cap \Lambda |&\le e^{-c\log ^2T} + C r^{4\nu }
  \sum _{t=1}^{r^2/2} \min \big( 1,|A\cap \Lambda  |t^{-d/2} \big)
  \sum _{ y\in S_L} \mathbb P (\Omega _{\rm esc}^{\Lambda^+} (z,A))  \\
    &\le e^{-c\log ^2T} +  r^{5\nu } {\rm Cap}_{\Lambda^+ }(A) \sum _{t=1}^{r^2/2} \min \big( 1,|A\cap \Lambda 
    |t^{-d/2} \big) 
\end{split}
\end{equation}
where in the last inequality we used that every $z\in A\cap \Lambda ^+$ is counted at most $CL^d$ times for every $y\in S_L$. The sum on the right hand side of \eqref{eq:72}, even when extended to infinity, is bounded by $C|A\cap \Lambda |^{2/d}$, which completes the proof of the lemma.
\end{proof}

At this point we switch from ORRW to the demon walk. Let, therefore, $\demon$ be some demon living outside $[-4R,4R]^d$ and let $\W$ be the demon walk. 

\begin{definition*}
For a block $\Lambda =u +[-r,r]^d$ with $u \in [-2R,2R]^d$ and $r\in [R^{\delta },R]$, define the stopping time
\begin{equation*}
    \tau _{\rm heavy}(\Lambda ) :=\min \big\{ t>0 : |\{\W(s): s\le t\}\cap \Lambda | \ge r^{\kappa } \big\}.
\end{equation*}
Let $\tau _{\rm heavy}$ be the first time $\tau _{\rm heavy}(\Lambda )$ happens for one of these $\Lambda $:
\begin{equation*}
     \tau _{\rm heavy} := \min \big\{ \tau _{\rm heavy} (\Lambda ): \Lambda =u +[-r,r]^d, \ u \in [-2R,2R]^d,\ r\in [R^{\delta },R] \big\}. 
\end{equation*}
\end{definition*}

The following theorem is the main result in this section.

\begin{thm}[No heavy blocks]\label{thm:heavy}
    We have that $\mathbb P \big( \tau _{\rm heavy} \le \inner  \big) \le e^{-c\log ^3T}$.
\end{thm}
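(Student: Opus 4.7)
The plan is to follow the capacity strategy of \S\ref{sec:sketch}: use Lemma~\ref{lem:Cap-Vol} to translate the large volume implied by heaviness into a large capacity, and then show that a large capacity forces so many escape events that the walk cannot afford them within its budget of time in $[-R,R]^d$. Via a union bound over the $O(R^{O(1)})$ choices of $(u,r)$ (dyadically in $r$), it suffices to fix $\Lambda = u + [-r,r]^d$ and bound $\PP(\tau_{\rm heavy}(\Lambda) = \tau_{\rm heavy} \le \spend(R))$. On this event, minimality of $\tau_{\rm heavy}$ forces the trace $A := \{\W(s) : s \le \tau_{\rm heavy}\}$ to be nowhere heavy, so Lemma~\ref{lem:Cap-Vol} yields $\capt_{\Lambda^+}(A) \ge r^{\eta}$ with $\eta := \kappa(1-2/d) - 6\nu$. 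Because $d \ge 6$ and $\kappa > 2d/(d-2)$, the exponent satisfies $\eta > 2$, and this positive slack will be the most important feature of the argument.

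Next I would enumerate the first-visit times $t_1 < t_2 < \cdots$ of $\W$ to $\Lambda^+$, set $z_i = \W(t_i)$, $A_i := \{\W(s) : s \le t_i\}$ and $p_i := \PP(\Omega_{\rm esc}^{\Lambda^+}(z_i, A_i))$, and let $N_i$ be the indicator that the three conditions of Definition~\ref{def:esc} hold for $(\W(t_i + s))_{s \le 2R^2}$ with $z = z_i$, $A = A_i$. The key probabilistic observation is that every reinforced edge has both endpoints in $A_i$, so the avoidance condition (condition 1) prevents $\W$ from crossing any reinforced edge during $[t_i + 1, t_i + 2R^2]$; conditionally on that avoidance, the continuation of $\W$ is distributed exactly as a virgin ORRW from $z_i$, and therefore
\[
\PP(N_i = 1 \mid \mathcal F_{t_i}) = p_i.
\]
Monotonicity of $A \mapsto \PP(\Omega_{\rm esc}^{\Lambda^+}(z, A))$ together with the fact that the $z_i$ enumerate $A \cap \Lambda^+$ then yields $\sum_i p_i \ge \capt_{\Lambda^+}(A) \ge r^\eta$ on our event.

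For the matching upper bound on $\sum_i N_i$, conditions 2 and 3 force, whenever $N_i = 1$, that $\W$ stays in $[-R,R]^d$ for all of $[t_i + R^2, t_i + 2R^2]$ and outside $\Lambda^+$ throughout $[t_i + 40r^2, t_i + 2R^2]$. I would group the successful indices into clusters of consecutive $t_i$'s whose consecutive gaps are $\le 40 r^2$; a short induction using condition~3 applied to the first event of a cluster (and using that $80 r^2 < 2R^2$ for the relevant $r$) then shows that every element of a cluster lies within $40 r^2$ of its first event, so each cluster contains at most $Cr^2$ escape indices. Distinct clusters are separated in time by more than $R^2$ and each contributes $R^2$ to the in-$[-R,R]^d$ budget; since $|\{t \le \spend(R) : \W(t) \in [-R,R]^d\}| \le R^2 \log^3 R$, at most $O(\log^3 R)$ clusters can occur, giving $\sum_i N_i \le C r^2 \log^3 R$ deterministically. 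Freedman's inequality applied to the martingale $M_t := \sum_{i : t_i \le t}(N_i - p_i)$, whose predictable quadratic variation is at most $\sum p_i$, then shows that on $\{\sum p_i \ge r^\eta\}$ one has $\sum N_i \ge \tfrac12 r^\eta$ except with probability $C e^{-c r^\eta}$. For $r \ge R^\delta$ and $T \ge a^{-1/(2d)}$ sufficiently large, $r^{\eta - 2}$ exceeds $C \log^3 R$, so $\tfrac12 r^\eta$ overtakes $C r^2 \log^3 R$; the deterministic upper bound is thereby violated, forcing $\PP(\tau_{\rm heavy}(\Lambda) = \tau_{\rm heavy} \le \spend(R)) \le C e^{-c R^{\delta \eta}}$, and the union bound over blocks delivers the claimed $e^{-c \log^3 T}$.

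The main obstacle I anticipate is the thin slack $\eta - 2 = \tfrac{1}{3} - 6\nu$: the per-cluster bound $Cr^2$ and the capacity lower bound $r^\eta$ differ by only a fractional power of $r$, so every step of the argument has to be executed with the right constants. A larger $\kappa$ would spoil other parts of the induction (the walk would begin to hit its history) and a smaller $\nu$ would weaken Assumption~\hyperref[h1]{\bf{(H1)}}, so the exponents in \S\ref{sec:assump} are tuned precisely to make this inequality hold with room to spare. A second delicate point is the inductive proof that every cluster fits inside a window of width $40 r^2$; this relies decisively on condition~3 of Definition~\ref{def:esc} (which forces $\W$ out of $\Lambda^+$ for the entire long window $[40r^2, 2R^2]$ after each success) and is the reason that $\Omega_{\rm esc}^{\Lambda^+}$ was designed with that specific geometric requirement.
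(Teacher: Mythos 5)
Your overall strategy --- translate heaviness into large capacity via Lemma~\ref{lem:Cap-Vol}, and then show that a large capacity forces more escape events than the in-$[-R,R]^d$ budget permits --- is exactly the strategy of the paper (Proposition~\ref{prop:heavy} and Lemmas~\ref{lem:Np}--\ref{lem:small-capacity}), presented in contrapositive form. Your exponent arithmetic $\eta = \kappa(1-2/d)-6\nu > 2$, the $\Lambda^+$-escape event, and the clustering argument (condition~3 of Definition~\ref{def:esc} gives $\le Cr^2$ successes per cluster and $\ge 2R^2$ separation between first elements of distinct clusters, hence $\le C\log^3R$ clusters) are all sound and match the paper's reasoning.

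However, the concentration step has a genuine gap. The process $M_t := \sum_{i : t_i \le t}(N_i - p_i)$ is not adapted: $N_i$ is only determined by time $t_i + 2R^2$, and the next first-visit time $t_{i+1}$ can be as little as one step after $t_i$. Consequently $N_i$ is typically not $\mathcal F_{t_{i+1}}$-measurable, and $(N_i - p_i)$ is not a martingale difference sequence with respect to $(\mathcal F_{t_i})$. This is not a cosmetic issue: within a single $40r^2$-cluster the indicators $N_i$ are strongly positively correlated (they are all consequences of the same escape), so the true fluctuations of $\sum N_i$ are of order $\sqrt{r^2 \sum p_i}$, not $\sqrt{\sum p_i}$; the quadratic-variation estimate $\sum p_i$ underestimates the variance by roughly the cluster size $r^2$, and Freedman's bound $e^{-c r^\eta}$ is too optimistic. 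The paper circumvents this precisely by introducing stopping times $\zeta_j$ spaced at least $40r^2$ apart (Lemma~\ref{lem:Np}), for which the escape event $\ED(\zeta_j)$ \emph{is} $\mathcal F_{\zeta_{j+1}}$-measurable (either $\zeta_{j+1} > \zeta_j + 2R^2$, in which case the full outcome is known, or $\zeta_{j+1} \le \zeta_j + 2R^2$, in which case $\W(\zeta_{j+1}) \in \Lambda^+$ at a time in $[\zeta_j + 40r^2, \zeta_j+2R^2]$ already certifies failure), so that a clean binomial-domination argument applies. The per-chunk maxima and the dyadic decomposition over escape probabilities in Lemma~\ref{lem:small-capacity} then do the bookkeeping that your single Freedman application was meant to do. To repair your version you would need to thin the $t_i$ to a $40r^2$-spaced subsequence and run the dyadic decomposition over the escape-probability scales, at which point the argument essentially becomes the paper's. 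A minor additional point: $\PP(N_i = 1\mid\mathcal F_{t_i}) = p_i$ is only an approximate equality (up to a multiplicative constant, as in \eqref{eq:escesc}), because $\W(t_i) \in A_i$ and the first step out of $z_i$ is taken with reinforced weights while the virgin walk has uniform weights; this does not affect the argument but should be stated as $\approx$ rather than $=$.
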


From now on, throughout this section, we fix $\Lambda :=u+[-r,r]^d$ with $u \in [-2R,2R]^d$ and $r\in [R^{\delta },R]$ and prove the following proposition. Theorem~\ref{thm:heavy} follows immediately from this proposition and a union bound.

\begin{prop}\label{prop:heavy}
   We have that $\mathbb P\big( \tau _{\rm heavy} (\Lambda ) \le \min (\tau _{\rm heavy}, \inner ) \big) \le e^{-c\log ^3T}$. 
\end{prop}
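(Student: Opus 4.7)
The plan is a capacity-based supermartingale argument. On the event $\mathcal{E}:=\{\tau_{\rm heavy}(\Lambda)\le\min(\tau_{\rm heavy},\inner)\}$, writing $\tau:=\tau_{\rm heavy}(\Lambda)$ and $A:=\{\W(s):s\le\tau\}$, we have $|A\cap\Lambda|\ge r^\kappa$ and $A$ is nowhere heavy (since $\tau\le\tau_{\rm heavy}$). Lemma~\ref{lem:Cap-Vol} then yields
\[
{\rm Cap}_{\Lambda^+}(A)\;\ge\;r^{\kappa(1-2/d)-6\nu}\;\ge\;r^{2+\eta}
\]
for some absolute $\eta>0$, using $d\ge 6$, $\kappa=3.5$, and the smallness of $\nu$.

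Enumerate the stopping times $\sigma_1<\sigma_2<\cdots$ at which $\W$ first visits a new vertex of $\Lambda^+$, and set $Z_k:=\W(\sigma_k)$, $A_k:=\{\W(s):s\le\sigma_k\}$, $p_k:=\mathbb{P}(\Omega_{\rm esc}^{\Lambda^+}(Z_k,A_k))\in\mathcal{F}_{\sigma_k}$, and $V_\tau:=|\{k:\sigma_k\le\tau\}|$. Since $\Omega_{\rm esc}^{\Lambda^+}(z,\cdot)$ is monotone decreasing in the forbidden set, $\sum_{k\le V_\tau}p_k\ge{\rm Cap}_{\Lambda^+}(A)\ge r^{2+\eta}$. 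Let $X_k$ be the indicator that $\W$'s own continuation from $\sigma_k$ satisfies the three escape conditions with respect to $A_k$. During such an escape window the walk must remain in $[-4R,4R]^d$, where the demon does not intervene, so the continuation is pure ORRW with the edges incident to $A_k$ reinforced. Under the natural coupling with an independent virgin ORRW $W_{Z_k}$, the escape event forbids visits to $A_k$ for $t\ge 1$, so the Radon--Nikodym derivative between the two continuation laws restricted to this event is determined solely by the first step at $Z_k$ and lies in $[1-Ca,1+Ca]$. This gives $\mathbb{E}[X_k\mid\mathcal{F}_{\sigma_k}]\ge(1-Ca)p_k\ge cp_k$ for $a$ small.

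Next, observe that successful escapes are expensive in inner time: $X_k=1$ forces $\W(t)\in[-R,R]^d$ on $[\sigma_k+R^2,\sigma_k+2R^2]$ and $\W(t)\notin\Lambda^+$ on $[\sigma_k+40r^2,\sigma_k+2R^2]$, so any later $\sigma_{k'}$ with $\W(\sigma_{k'})\in\Lambda^+$ must satisfy $\sigma_{k'}\in[\sigma_k,\sigma_k+40r^2)\cup[\sigma_k+2R^2,\infty)$. Grouping successful escapes into \emph{clusters} with consecutive gaps $\le 40r^2$ produces groups separated by $\ge 2R^2$, each using a disjoint inner-time interval of length at least $R^2$; combined with $\tau\le\inner$, this bounds the number of clusters by $O(\log^3R)$ and yields an upper bound on $\sum_{k\le V_\tau}X_k$ in terms of $r$, $R$, and $\log T$. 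Apply Freedman's inequality to the supermartingale $M_j:=\sum_{k\le j}(cp_k-X_k)$ (increments bounded by $1$; conditional variance at most $\mathbb{E}[X_k\mid\mathcal{F}_{\sigma_k}]\le 2p_k$), stopped at $\rho:=\min\{k:\sum_{j\le k}p_j\ge S_0\}$ for $S_0$ a suitable large multiple of $\log^3T$. On $\mathcal{E}$ the capacity lower bound guarantees $\rho\le V_\tau$, and Freedman yields $\mathbb{P}(\sum_{k\le\rho}X_k\le(c/2)S_0)\le e^{-c'S_0}\le e^{-c''\log^3T}$. Combined with the cluster upper bound on $\sum X_k$, which on $\mathcal{E}$ falls below $(c/2)S_0$, this gives $\mathbb{P}(\mathcal{E})\le e^{-c\log^3T}$.

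The main obstacle will be the cluster counting and its interaction with Freedman's variance bound: overlapping escape windows can pack many successful escapes into a short inner-time interval, especially when $r$ is small, so upper-bounding $\sum X_k$ tightly enough relative to $r^{2+\eta}$ uniformly over $r\in[R^\delta,R]$ requires careful per-cluster analysis and possibly a separate treatment of the small-$r$ regime. The Radon--Nikodym coupling in the presence of the demon must also be set up with care, ensuring that the first-step comparison is genuinely the only source of discrepancy between $\W$'s continuation and the virgin ORRW $W_{Z_k}$.
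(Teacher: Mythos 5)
Your overall strategy is logically the contrapositive of the paper's: you assume the bad event, lower-bound the capacity via Lemma~\ref{lem:Cap-Vol} and derive a contradiction from escape counting, whereas the paper first upper-bounds the capacity (Lemma~\ref{lem:small-capacity}) via a dyadic decomposition over escape-probability scales (Lemma~\ref{lem:Np}) and then converts via Lemma~\ref{lem:Cap-Vol}. The ingredients you identify --- the monotonicity giving $\sum_k p_k\ge{\rm Cap}_{\Lambda^+}(A)$, the Radon--Nikodym comparison $\mathbb{E}[X_k\mid\mathcal{F}_{\sigma_k}]\approx p_k$ (which is exactly the paper's comparison in the display preceding the definition of $N_p$), and the observation that successful escapes force an $R^2$ stretch inside $[-R,R]^d$ and hence are rare before $\inner$ --- are all present in the paper.

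However, there is a real gap in the Freedman step. Your supermartingale $M_j=\sum_{k\le j}(cp_k-X_k)$ uses \emph{every} new-vertex time $\sigma_k$, with no spacing, so consecutive escape windows $[\sigma_k,\sigma_k+2R^2]$ overlap heavily. There is no filtration $\{\mathcal{G}_j\}$ to which $M_j$ is adapted and for which the conditional-drift inequality $\mathbb{E}[M_{j+1}-M_j\mid\mathcal{G}_j]\le 0$ holds: if $\mathcal{G}_j$ contains $X_1,\dots,X_j$ then it reveals information about the walk well past $\sigma_{j+1}$, and the bound $\mathbb{E}[X_{j+1}\mid\mathcal{G}_j]\ge cp_{j+1}$ breaks; if $\mathcal{G}_j=\mathcal{F}_{\sigma_j}$ then $M_j$ is not $\mathcal{G}_j$-measurable. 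The paper resolves precisely this by working with stopping times $\zeta_j$ spaced by $\ge 40r^2$: since condition (3) of the escape event already fails the moment the walk re-enters $\Lambda^+$ after time $40r^2$, the indicator $\mathds{1}[\ED(\zeta_{j-1})]$ is $\mathcal{F}_{\zeta_j}$-measurable, and the Binomial lower-tail bound applies cleanly. If you thin your $\sigma_k$'s to enforce this spacing, the inequality $\sum p_k\ge{\rm Cap}$ is no longer free: you need to pass to a maximum over each $40r^2$ window (which re-introduces measurability issues for the argmax), and that is essentially why the paper decomposes by dyadic scales $p$ rather than running a single Freedman argument over the raw $p_k$'s.

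Two smaller points. First, your $S_0$ should be a large multiple of $r^2\log^3T$, not of $\log^3T$: your own cluster bound yields $\sum X_k\le 40r^2(\log^3R+1)$ (each cluster lasts $<40r^2$ steps and hence contributes at most $40r^2$ successes), so $(c/2)S_0$ with $S_0\asymp\log^3T$ does not dominate it. Since ${\rm Cap}\ge r^{2+\eta}\gg r^2\log^3T$ for $r\ge R^\delta$, this is fixable, but the quantities as written do not balance. Second, when quantifying the first-step Radon--Nikodym factor, note that $Z_k$ is newly discovered at $\sigma_k$, so exactly one incident edge (to $\W(\sigma_k-1)$) is reinforced at that moment; the factor is exactly $\tfrac{2d}{2d+a}$, not merely in $[1-Ca,1+Ca]$. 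This is harmless but worth stating precisely since it is the only source of discrepancy.
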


The main idea in the proof of Proposition~\ref{prop:heavy} is to bound the capacity of the trajectory $\W[0,t]$ within $\Lambda $, and then translate this estimate to a volume bound using Lemma \ref{lem:Cap-Vol}.

For a time $t\ge 0$, let $A_t:=\W[0,t]$ be the history of $\W$ up to $t$. To prove Proposition~\ref{prop:heavy}, we would like to bound the volume of $A_\tau \cap \Lambda $ where 
\begin{equation*}
    \tau :=\min(\tau _{\rm heavy}, \inner) .
\end{equation*}
The history $A_\tau $ is nowhere heavy since $\tau \le \tau _{\rm heavy}$ and so by Lemma~\ref{lem:Cap-Vol} it suffices to bound its capacity. To this end, we need a demon version of the escape event (recall Definition~\ref{def:esc}).
\begin{definition*} For any $t$, define
  \[
  \ED(t)=\big\{ \W(t)\in\Lambda^+, \big( \W(t+s) \big) _{s\ge 0}\in\Omega_{\rm esc}^{\Lambda^+}(\W(t),\W[0,t]) \big\}.
  \]
\end{definition*}
In words, $\ED(t)$ is the event that $\W(t)\in \Lambda ^+$ and also the walk $(\W(t+s))_{s\ge 0}$ satisfies the three conditions in the definition of $\Omega _{\rm esc}^{\Lambda^+ }(z,A)$ with $z=\W(t)$ and $A=\W[0,t]$. We recall that this means that $\W$ avoids its history, escapes fast to $[-R,R]^d$ and spends some time there towards $\inner$.

Since $\Omega_{\rm esc}^{\Lambda^+}(z,A)$ contains the event that $W[1,2R^2]\cap A=\emptyset$ we see that the only interaction of $\W$ with its past is at time $t$. Hence
\begin{equation}\label{eq:escesc}
\mathbb P \big( \Omega_{\rm esc}^{\Lambda^+}(\W(t),\W[0,t]) \mid \mathcal F _t\big) \approx \mathbb P(\ED(t) \mid \mathcal F _t),
\end{equation}
where $\mathcal F _t :=\sigma (\W(0),\dots ,\W(t))$. In words, the left hand side is the probability that ORRW (in virgin environment) $W$ starting from $\W(t)$ satisfies $\Omega_{\rm esc}^{\Lambda^+}$ with respect to the set $\W[0,t]$, while the right hand side refers to a demon random walk in an existing environment. This also uses that, again under $\Omega_{\rm esc}^{\Lambda^+}(\W(t),\W[0,t])$, we have that $W$ does not leave $[-4R,4R]^d$ and hence in the relevant time period $[t,t+2R^2]$ there is no difference between $W$ and $\W$. 

For $p\in (0,1)$, we let $N_p$ be the number of excursions of $\W$ of length $r^2$ up to $\tau $ during which we had a possible such escape $\ED(t)$ with probability at least $p$:
\begin{equation*}
    N_p:=\big| \big\{i\le \lceil\tau /r^2\rceil : \exists t\in [ir^2,(i+1)r^2) \text{ with }\W(t)\in \Lambda^+  \text{ and }  \mathbb P  (\ED(t) \mid  \mathcal F _t  ) \ge p\big\} \big|.
\end{equation*}

\begin{lem}\label{lem:Np}
    For any $p\in (0,1)$ we have that 
\begin{equation*}
    \mathbb P \big( N_p \ge (\log ^4T)/p  \big) \le e^{-c\log ^3T}.
\end{equation*}
\end{lem}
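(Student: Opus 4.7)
The plan is a Chernoff/Freedman martingale argument combined with the deterministic bound $R^2\log^3T$ on the total inner-box time available before $\inner$.

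For each excursion $i$ counted by $N_p$, let $t_i$ denote the first witness time in $[ir^2,(i+1)r^2)$ and set $\xi_i:=\mathds{1}\{\ED(t_i)\}$, so that $\mathbb E[\xi_i\mid\mathcal F_{t_i}]\ge p$ by the defining inequality of $N_p$. Call opportunity $i$ \emph{fresh} if no earlier escape (no $j<i$ with $\xi_j=1$) lies in $(t_i-2R^2,t_i)$, and \emph{non-fresh} otherwise; freshness is $\mathcal F_{t_i}$-measurable.

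The key deterministic observation, proved by induction on chain length, is that every escape reachable from a fresh escape $\sigma^\star$ through a chain of consecutive non-fresh escapes must lie in the short window $[\sigma^\star,\sigma^\star+40r^2]$. Indeed, supposing $\sigma_{k-1}-\sigma^\star<40r^2$ by the inductive hypothesis, the non-freshness of $\sigma_k$ gives $\sigma_k-\sigma_{k-1}<2R^2$, which condition~(3) of $\Omega_{\rm esc}^{\Lambda^+}$ applied at $\sigma_{k-1}$ strengthens to $\sigma_k-\sigma_{k-1}<40r^2$ (the walker must be in $\Lambda^+$ at $\sigma_k$); hence $\sigma_k-\sigma^\star<80r^2<2R^2$, and a final application of condition~(3) at $\sigma^\star$ yields $\sigma_k-\sigma^\star<40r^2$. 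The same reasoning confines any non-fresh opportunity to an $80r^2$-window after its initiating fresh escape, so the number of non-fresh opportunities is at most $80F$, where $F$ is the number of fresh escapes. On the event $\{\tau\le\inner\}$, condition~(2) of $\Omega_{\rm esc}^{\Lambda^+}$ forces the walker into $[-R,R]^d$ throughout each $[\sigma+R^2,\sigma+2R^2]$, and for fresh escapes these intervals are pairwise disjoint (consecutive fresh escapes being separated by more than $2R^2$), so $(F-1)R^2\le R^2\log^3T$, i.e., $F\le\log^3T+1$, and consequently there are at most $O(\log^3T)$ non-fresh opportunities.

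For the fresh opportunities, apply a conditional Freedman/Chernoff bound (as in \cite[Theorems 3.6 and 3.7]{chung2006concentration}) to the martingale $M_n:=\sum_{i\le n}(\xi_i\mathds{1}\{i\text{ fresh}\}-\mathbb E[\xi_i\mathds{1}\{i\text{ fresh}\}\mid\mathcal F_{t_i}])$ stopped at $n:=\min(N_{\rm fresh},\lceil C\log^3T/p\rceil)$, for a large constant $C$. On the event $\{N_{\rm fresh}\ge\lceil C\log^3T/p\rceil\}$ the conditional mean exceeds $pn\ge C\log^3T$, while the deterministic bound imposes $\sum\xi_i\mathds{1}\{i\text{ fresh}\}\le F\le\log^3T+1$. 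Since the conditional variance is at most $pn$, Freedman's inequality with deviation $pn/2$ gives probability $\le e^{-cpn}\le e^{-c\log^3T}$ for this gap. Combining, with probability at least $1-e^{-c\log^3T}$ one has $N_p=N_{\rm fresh}+N_{\rm non{-}fresh}\le C\log^3T/p+O(\log^3T)\le\log^4T/p$ for $T$ sufficiently large. (When $p$ is so small that $\log^4T/p$ exceeds the trivial bound $\tau/r^2$ on the total number of excursions, the statement holds for free.)

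The main technical subtlety is the inductive cluster-confinement step: without the $40r^2$-window bound on cluster spans, successive non-fresh escapes could a priori extend clusters over time spans of order $R^2$, which would spoil the count of non-fresh opportunities. Once confinement is established, the rest of the proof is a routine martingale concentration argument.
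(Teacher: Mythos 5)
Your overall strategy (deterministic budget of at most $\log^3 T$ ``useful'' escapes, plus a conditional Chernoff bound) matches the paper's, and your cluster-confinement observation is a nice deterministic argument. However, the proof has a genuine gap that makes the probabilistic step fail.

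The claim ``freshness is $\mathcal F_{t_i}$-measurable'' is false, and the concentration argument rests on it. By your own cluster analysis, opportunity $i$ is non-fresh precisely when there is an escape $j<i$ with $t_j\in(t_i-40r^2,t_i)$; condition (3) of $\Omega_{\rm esc}^{\Lambda^+}$ rules out escapes at distance in $[40r^2,2R^2]$ from $t_i$ because $\W(t_i)\in\Lambda^+$. But for $t_j$ in that short window, $\xi_j=\mathds 1\{\ED(t_j)\}$ depends on the walk up to time $t_j+2R^2>t_i$: at time $t_i$ one has seen fewer than $40r^2$ steps of the putative escape, which is nowhere near enough to decide whether conditions (1)--(3) of $\Omega_{\rm esc}^{\Lambda^+}$ will hold over the full horizon $[1,2R^2]$. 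Consequently $\mathds 1\{i\text{ fresh}\}$ is not $\mathcal F_{t_i}$-measurable, the $k$-th fresh opportunity is not a stopping time, and one cannot conclude $\mathbb E[\xi_i\mathds 1\{i\text{ fresh}\}\mid\mathcal F_{t_i}]\ge p\,\mathds 1\{i\text{ fresh}\}$. Without that lower bound, the step ``the conditional mean exceeds $pn$'' has no justification, and the Freedman estimate does not control the quantity you need.

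The paper circumvents exactly this issue by replacing the escape-based ``fresh'' labelling with stopping times $\zeta_0,\zeta_1,\dots$ defined by a \emph{predetermined} separation of $40r^2$: $\zeta_{j+1}$ is the first time $\ge\zeta_j+40r^2$ at which the walk is in $\Lambda^+$ with escape probability $\ge p$. These $\zeta_j$ are honest stopping times, so $\mathbb P(\ED(\zeta_j)\mid\mathcal F_{\zeta_j})\ge p$ is well-posed, and the same condition (3) of $\Omega_{\rm esc}^{\Lambda^+}$ now works \emph{for} the argument rather than against it: if $\ED(\zeta_j)$ occurs, the walk cannot be in $\Lambda^+$ during $[\zeta_j+40r^2,\zeta_j+2R^2]$, which both forces $\zeta_{j+1}\ge\zeta_j+2R^2$ (giving disjoint $R^2$-chunks of inner-box time, hence at most $\log^3R$ escapes) and makes $\ED(\zeta_j)$ measurable with respect to $\mathcal F_{\zeta_{j+1}}$. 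This yields the clean binomial domination and Chernoff tail. You could salvage your write-up by replacing ``fresh'' with a purely time-based, adapted notion of separation (at which point the cluster-confinement lemma is no longer needed -- the $40r^2$ gap plays its role directly), which is essentially the paper's proof.
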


\begin{proof}
Define a sequence of stopping times by $\zeta _0=0$ and inductively
\begin{equation*}
    \zeta _{j+1} := \min \big\{ t\ge \zeta _j +40r^2 : \W(t)\in \Lambda^+ \text{ and } \mathbb P  ( \ED(t) \mid \mathcal F _t) \ge p  \big\}.
\end{equation*}
First, observe that if $N_p\ge (\log ^4T)/p$ then at least $j_0:=\lfloor (\log ^4T)/(40p) \rfloor$ of these stopping times occur before $\tau $. Next, if the event $\ED(\zeta _{j})$ holds, then the walk $\W$ leaves $\Lambda^+$ within time $40r^2$ and does not return before time $2R^2$. Hence, if $\ED(\zeta _{j})$ holds, then walk spends at least $R^2$ time in $[-R,R]^d$ during the time interval $[\zeta _j,\zeta _j+2R^2]\subseteq [\zeta _j,\zeta _{j+1}]$. Thus, the event $\ED(\zeta _{j})$ can occur for at most  $\log ^3R $ different $j$ before $\tau \le \inner $. We obtain that
\begin{equation*}
\begin{split}
  \mathbb P \big( N_p \ge (\log ^4T)/p  \big)
  &\le \mathbb P \big( \zeta _{j_0}<\tau , \ |\{j< j_0: \ED(\zeta _j) \text{ holds} \}| \le \log ^3T \big)\\
    &\le \mathbb P \big( {\rm Bin}(j_0-1,p) \le \log ^3T \big) \le e^{-c\log ^3T},
\end{split}
\end{equation*}
where in the last inequality we used that $(j_0-1)p\ge c\log ^4T$ and a standard tail bound for the Binomial distribution (see, e.g., \cite[Corollary~A.1.14]{alon2016probabilistic}).
\end{proof}

\begin{lem}[Small Capacity]\label{lem:small-capacity}
    We have that $\mathbb P \big( {\rm Cap} _{\Lambda^+ }(A_\tau ) \ge r^2 \log ^6T  \big)\le e^{-c\log ^3T}$.
\end{lem}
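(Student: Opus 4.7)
The plan is to track the growth of ${\rm Cap}_{\Lambda^+}(A_\tau)$ vertex by vertex and then to bound the resulting sum using Lemma~\ref{lem:Np}. The starting point is that $\Omega_{\rm esc}^{\Lambda^+}(z,A)$ is antitone in $A$: only condition~(1) of Definition~\ref{def:esc} involves $A$, and it becomes stricter as $A$ grows. Letting $t_z$ be the first time $\W$ visits $z$, this lets me replace $A_\tau$ by $A_{t_z}$ in each summand; combining with \eqref{eq:escesc} I obtain
\[
{\rm Cap}_{\Lambda^+}(A_\tau)=\sum_{z\in A_\tau\cap\Lambda^+}\mathbb P\big(\Omega_{\rm esc}^{\Lambda^+}(z,A_\tau)\big)\le\sum_{z\in A_\tau\cap\Lambda^+}\mathbb P\big(\Omega_{\rm esc}^{\Lambda^+}(z,A_{t_z})\big)\le C\sum_{z\in A_\tau\cap\Lambda^+}q(t_z),
\]
where $q(t):=\mathbb P(\ED(t)\mid\mathcal F_t)$.

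Next I would partition the time interval into windows $I_i=[ir^2,(i+1)r^2)$, matching the convention used in Lemma~\ref{lem:Np}. Each window contains at most $r^2$ new-vertex visits in $\Lambda^+$, and each such visit at time $t_z$ contributes at most $Q_i:=\max\{q(t):t\in I_i,\,\W(t)\in\Lambda^+\}$, giving ${\rm Cap}_{\Lambda^+}(A_\tau)\le Cr^2\sum_iQ_i$. To control $\sum_iQ_i$ I would apply Lemma~\ref{lem:Np} dyadically with $p=2^{-j}$ for $j$ ranging up to some threshold $j^\ast=O(\log T)$, together with a union bound: on an event of probability $1-e^{-c\log^3T}$, the number of windows with $Q_i\ge 2^{-j}$ is at most $2^j\log^4T$ for every such $j$. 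The contribution to $\sum_iQ_i$ from windows of dyadic level $j$ (those with $Q_i\in[2^{-j-1},2^{-j})$) is thus at most $2^{j+1}\log^4T\cdot 2^{-j}=2\log^4T$; summing over the $O(\log T)$ scales yields $C\log^5T$, while windows with $Q_i<2^{-j^\ast}$ contribute negligibly once $j^\ast$ is chosen large enough. Multiplying by $Cr^2$ produces ${\rm Cap}_{\Lambda^+}(A_\tau)\le r^2\log^6T$, as required.

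The main obstacle will be making the inequality $\mathbb P(\Omega_{\rm esc}^{\Lambda^+}(z,A_{t_z}))\le Cq(t_z)$ precise, as alluded to in \eqref{eq:escesc}. The point is that on either escape event the walk stays in $[-4R,4R]^d\setminus A_{t_z}$, so the demon is never invoked and no reinforced edge is ever crossed --- except possibly at the very first step, where some edges at $\W(t_z)$ may already be reinforced and bias the initial transition probabilities by a factor of at most $1+Ca$. Any step into such a reinforced edge would land in $A_{t_z}$ and immediately violate the escape condition, so up to this $(1+Ca)$ correction the two escape probabilities agree. Once this reduction is in place the remainder of the proof is a routine dyadic summation.
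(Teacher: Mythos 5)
Your proposal is correct and follows essentially the same route as the paper: monotonicity of the escape event in $A$ reduces the capacity to a sum of $q(t)=\mathbb P(\ED(t)\mid\mathcal F_t)$ over first-visit times, windowing into length-$r^2$ blocks and dyadic slicing of the window maxima bounds this sum via Lemma~\ref{lem:Np}, and the tail beyond $j^\ast\approx 2d\log_2 r$ (with at most $Cr^d$ nonzero terms) is absorbed. Your treatment of \eqref{eq:escesc} is also the intended one — decompose over the first step, observe all reinforced edges at $\W(t)$ lead back into $A_t$ and hence cost both processes the escape, so the first-step laws match up to a $(1+O(a))$ factor and the conditional continuations coincide.
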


\begin{proof}
For $A\subseteq [-4R,4R]^d$ and $z\in A\cap \Lambda ^+$, let $p(z,A):=\mathbb P (\Omega _{\rm esc}^{\Lambda }(z,A))$. Observe that if $z\in A'\subseteq A$ then $p(z,A)\le p(z,A')$ and therefore letting $S:=\{t\le \tau : \W(t)\in \Lambda ^+\setminus A_{t-1}\}$ be the set of times in which $\W$ discovers a new vertex $z\in \Lambda ^+$, we have that
\begin{align*}
  {\rm Cap} _{\Lambda ^+} (A_\tau )&=\!\!\sum _{z\in A_\tau \cap \Lambda ^+}\! p(z,A_\tau ) = \sum _{t\in S} p(\W(t),A_\tau ) \le \sum _{t\in S} p(\W(t),A_t )\\
  &\le C\sum _{t\in S}  \mathbb P \big( \ED(t)\mid \mathcal F _t \big),
\end{align*}
where in the last inequality we used \eqref{eq:escesc}. 
To bound the last sum, we first divide it into chunks of size $r^2$, namely, we define
\[
M_i\coloneqq \max \big\{ \mathbb P\big(\ED(t)\mid\mathcal F_t):t\in S\cap[ir^2,(i+1)r^2) \big\}
\]
or $M_i:=0$, if $S\cap [ir^2,(i+1)r^2)=\emptyset$. We get
\[
  \sum _{t\in S} \mathbb P \big( \ED(t)\mid \mathcal F _t \big)
  \le \sum _{i\le\lceil\tau/r^2\rceil} r^2M_i.
\]
We further break the sum according to the scale of $M_i$, namely let $P:=\{ 2^{-j} : 1\le j \le \lceil \log _2(r^{2d}) \rceil  \}$  be the set of relevant dyadic scales, and then
\[
\sum _{i\le\lceil\tau/r^2\rceil} r^2M_i
  \le 1+\sum_{p\in P} 2pr^2|\{i\le \lceil\tau/r^2\rceil:M_i\in[p,2p]\}|
  \le 1+\sum_{p\in P} 2pr^2N_p,
\]
where the summand $1$ comes from $i$ with $M_i\le r^{-2d}$ (note that the number of $i$ such that $M_i>0$ is at most $|S|\le |\Lambda ^+|\le Cr^d$). On the event $\mathcal B:=\bigcap _{p\in P}\{N_p\le (\log ^4T)/p\}$, the last sum is bounded by $2r^2(\log ^4T)|P| \le Cr^2 \log ^5 T$. This finishes the proof since $\mathbb P (\mathcal B^c)\le e^{-c\log ^3T}$ by Lemma~\ref{lem:Np}.
\end{proof}

\begin{proof}[Proof of Proposition~\ref{prop:heavy}]
    Since $\tau \le \tau _{\rm heavy}$, the set $A_\tau $ is nowhere heavy and therefore by Lemma~\ref{lem:Cap-Vol}, on the complement of the event in Lemma~\ref{lem:small-capacity} we have
    \begin{equation*}
        |A_\tau \cap \Lambda | \le (r^{6\nu }{\rm Cap}_{\Lambda^+ }(A_\tau )) ^{\frac{d}{d-2}} \le r^{10\nu } r^{\frac{2d}{d-2}} \le r^{\kappa }
    \end{equation*}
    and therefore $\tau _{\rm
    heavy }(\Lambda ) >\tau $. It follows from Lemma~\ref{lem:small-capacity} that $\mathbb P (\tau _{\rm
    heavy }(\Lambda ) \le \tau) \le e^{-c\log ^3T}$.
\end{proof}

\section{Demons}\label{sec:demons}

In this section we finish the proof of Theorem~\ref{thm:step} by proving Assumption~\hyperref[h2]{\bf{(H2)}} at time $T$. This is stated in the following theorem.

\begin{thm}\label{thm:relaxed}
We have that 
\begin{equation*}
     \mathbb P \bigg( \!\!\! \begin{array}{cc}
         \forall t\le \inner -R^{\epsilon } \text{ such that }\W(t)\in [-R,R]^d  \\
         \text{ we have } |(t,t+R^{\epsilon })\cap \, \DemonRel(R) | \ge 0.9R^{\epsilon }+1 
    \end{array} \bigg) \ge 1-Ce^{-c\log ^3T}.
\end{equation*}
\end{thm}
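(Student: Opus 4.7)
Relaxedness of a time $t$ (Definition~\ref{def:relaxed}) asks that no block around $\W(t)$ at any scale $r\in[a^{-1/3},R]$ be heavy. I split the argument at the threshold $R^\delta$. The macroscopic range $r\in[R^\delta,R]$ is covered by Theorem~\ref{thm:heavy}: on its good event $A$ (probability $\ge 1-e^{-c\log^3 T}$), every time before $\inner$ with $\W$ in $[-2R,2R]^d$ is automatically relaxed at these scales. All the work is therefore in controlling the microscopic range $r\in[a^{-1/3},R^\delta]$, which I handle via the inductive hypothesis~\hyperref[h2]{\bf{(H2)}} applied at a much smaller time $T'$.

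Concretely, I would pick a scale $R^*$ with $R^\delta\le R^*\ll R^\epsilon$ (the choice is legal thanks to $\delta\ll\epsilon$ from Remark~\ref{rem:exponents}), set $T':=(2R^*)^2\log^3 R^*$, and cover $[-2R,2R]^d$ by sub-boxes $\Lambda'_u=u+[-4R^*,4R^*]^d$ on a grid of spacing $R^*$. An abstract-nonsense argument lifting Lemma~\ref{lem:abstract} from ORRW to demon walks shows that each restriction $\restrict{\W}{\Lambda'_u}$ is itself distributed as a demon walk inside $\Lambda'_u$ for some effective outer demon (which bundles the original outer demon with the randomness of $\W$ outside $\Lambda'_u$). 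Applying~\hyperref[h2]{\bf{(H2)}} at time $T'$ to each such restriction and union-bounding over the $(R/R^*)^d$ sub-boxes, I obtain an event $B$ of probability $\ge 1-e^{-c\log^2 T}$ on which every restricted walk has the $90\%$ local relaxedness property at scales up to $R^*$.

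On $A\cap B$, fix $t_0\le\inner-R^\epsilon$ with $\W(t_0)\in[-R,R]^d$. Canonically assign to each $s\in(t_0,t_0+R^\epsilon)$ a sub-box $\Lambda'(s)$ in whose central region $u(s)+[-R^*,R^*]^d$ the point $\W(s)$ lies. Since $R^*\ge R^\delta$, the block $\W(s)+[-r,r]^d$ for $r\in[a^{-1/3},R^\delta]$ is contained in $\Lambda'(s)$, so its intersection with $\W[0,s]$ coincides with the corresponding intersection for the restricted walk; hence, together with $A$, non-relaxedness of $s$ at scale $R$ for $\W$ is exactly the same as non-relaxedness of $\restrict{\W}{\Lambda'(s)}$ at the corresponding restricted time at scale $R^*$. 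Covering the restricted times of each sub-box by $(R^*)^\epsilon$-windows and using Event $B$, the total count of non-relaxed $s$ in the window is bounded by $0.1R^\epsilon+K\cdot(R^*)^\epsilon$, where $K$ is the number of distinct sub-boxes visited.

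The main obstacle is bounding the residual $K\cdot(R^*)^\epsilon$ by $o(R^\epsilon)$. With a sub-box assignment rule using a sticky buffer, reassignment requires $\W$ to travel distance $\gtrsim R^*$, which gives $K\lesssim R^{\epsilon-\delta}$; the exponent computation $R^{\epsilon-\delta}\cdot R^{\delta\epsilon}\ll R^\epsilon$ then closes the bound, which is precisely the $\delta\ll\epsilon$ check flagged in Remark~\ref{rem:exponents}. An additional subtlety is that~\hyperref[h2]{\bf{(H2)}} only applies up to the sub-walk's stopping time $\spend(R^*)$; if $\W$ concentrates in some sub-box past this time, then after $O((R^*)^d)$ steps all its edges are reinforced and a volume-exhaustion argument (as invoked in the introduction) lets us treat the walk there as simple random walk, recovering the relaxedness directly. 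Combining all these estimates, the non-relaxed count in $(t_0,t_0+R^\epsilon)$ is at most $0.1R^\epsilon-1$, which proves the theorem.
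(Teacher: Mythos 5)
Your decomposition by scale—Theorem~\ref{thm:heavy} for $r\in[R^\delta,R]$, the inductive hypothesis~\hyperref[h2]{\bf{(H2)}} on sub-boxes of side $\approx R^\delta$ for the microscopic scales, then a counting argument in $R^\epsilon$-windows—is indeed the paper's strategy (\S\ref{sec:demons} with $r=\lceil R^\delta\rceil$ and the partition $\mathcal Q$). However, there are two genuine gaps.

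\textbf{The union bound does not close the induction.} Applying~\hyperref[h2]{\bf{(H2)}} at time $T'\approx (R^*)^2\log^3 R^*$ gives a failure probability $e^{-\log^2 T'}$; since $R^*\le R^\epsilon$ and $T'\approx T^{O(\epsilon)}$, this is roughly $e^{-c\epsilon^2\log^2 T}$. Union-bounding over $(R/R^*)^d\approx T^{\Theta(1)}$ sub-boxes costs only a polynomial factor, so you land at $e^{-c'\log^2 T}$ with $c'<1$. But Theorem~\ref{thm:relaxed} asserts $e^{-c\log^3 T}$, and more importantly Assumption~\hyperref[h2]{\bf{(H2)}} at time $T$, which must be re-derived for the induction to close, requires $e^{-\log^2 T}$ with the exponent constant equal to $1$. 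Your bound $e^{-c'\log^2 T}$ with $c'<1$ is strictly weaker and the induction fails. This is precisely the role of the supermartingale construction (Lemmas~\ref{lem:supermartingale} and~\ref{lem:product}, Corollary~\ref{cor:bad}): it upgrades the single-box estimate $e^{-c\log^2 T}$ to a $k$-box estimate $e^{-ck\log^2 T}$, which after taking $k\approx\log T$ produces the needed $e^{-c\log^3 T}$. A naive union bound, which works additively, cannot achieve this amplification; the product structure coming from supermartingales (or equivalently the game-theoretic spatial independence the paper alludes to) is essential.

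\textbf{The ``over-drawn'' sub-boxes are handled incorrectly.} You claim that once $\W$ has crossed all edges of a sub-box, volume exhaustion lets you treat the walk as simple random walk, ``recovering the relaxedness directly.'' This is false: the definition of relaxed (Definition~\ref{def:relaxed}) requires the block $\W(t)+[-r,r]^d$ to not be heavy, i.e.\ to contain fewer than $r^\kappa$ visited vertices. Since $\kappa=3.5<d$, a block of side $r$ whose edges have all been crossed is automatically heavy, so \emph{all} times spent in it are non-relaxed regardless of whether the walk is behaving like SRW. Volume exhaustion (Lemma~\ref{lem:Volume exhaustion}) does not recover relaxedness; it only bounds the \emph{time} spent in such a block (by $\approx r^{2d}$). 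What is actually needed, and missing from your sketch, is a separate bound on the \emph{number} of sub-boxes where $\spend(\Lambda)\le\inner$. The paper proves this in \S\ref{sec:spend} (Lemma~\ref{lem:mathcal A}, Corollary~\ref{cor:spend}) via an escape argument: from a relaxed time inside a good over-drawn block, $\W$ has constant probability to escape to $[-R,R]^d$ and spend $R^2$ time there, which can happen at most $\log^3 R$ times before $\inner$; combining this with volume exhaustion controls the total time in the set $S$ of bad times, which then feeds into the window counting. Without this your ``total count of non-relaxed $s$'' can be dominated by over-drawn blocks and the bound $0.1R^\epsilon-1$ fails.
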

(Assumption \hyperref[h2]{(\bf{H2})} will follow for $a$ sufficiently small)

The rest of this section is devoted to the proof of Theorem~\ref{thm:relaxed}. To this end, fix $r:=\lceil R^{\delta } \rceil $ and  consider the following partition of space into (almost) disjoint blocks
\begin{equation*}
    \mathcal Q :=\big\{ \Lambda =u+[-r,r]^d : 
u\in [-\tfrac{3}{2}R,\tfrac{3}{2}R]^d \cap 2r\mathbb Z ^d  \big\}.
\end{equation*}
Similarly to \eqref{eq:spend}, for a block $\Lambda =u+[-r,r]^d\in \mathcal Q$ we define the stopping time $\spend (\Lambda )$ to be the first time $\W$ spends at least $r^2\log ^3r$ time in $\Lambda $:
\begin{equation*}
    \spend (\Lambda ):=\min \big\{ t\ge 0 : |\{s\le t: \W(s)\in \Lambda \}| \ge r^2\log ^3r\big\}
\end{equation*}
(so $\inner=\spend([-R,R]^d)$ but we will keep the compact notation $\inner$). We say that the block $\Lambda \in \mathcal Q$ is good if for any $t\le \spend (\Lambda )$ such that $\W (t)\in \Lambda $ we have that $|(t,t+r^{\epsilon })\cap \linebreak[0]{\DemonRel} (r)| \ge 0.9r^{\epsilon }+1$ (recall the definition of $\DemonRel$, Definition \ref{def:relaxed}). The block is said to be bad otherwise. Define the sets of blocks
\begin{equation}\label{eq:AB}
    \mathcal A:=\big\{ \Lambda \in \mathcal Q : \spend (\Lambda ) \le \inner \big\}  ,\quad \mathcal B:=\big\{ \Lambda \in \mathcal Q : \Lambda \text{ is bad} \big\}.
\end{equation}
Observe that if $\W (t)\in \Lambda $ for some $t\le \inner$ and $\Lambda \notin \mathcal A \cup \mathcal B$ then there is a high density of $r$-locally relaxed times in $[t,t+r^{\epsilon }]$. By Theorem~\ref{thm:heavy}, these times are actually $R$-locally relaxed as bigger blocks are not heavy. Hence, in order to prove Theorem~\ref{thm:relaxed} it suffices to bound the amount of time spent in blocks $\Lambda \in \mathcal A \cup \mathcal B$. In \S~\ref{sec:super} we bound the number of blocks in $\mathcal B$, in \S~\ref{sec:spend} we bound the number of blocks in $\mathcal A \setminus \mathcal B$, and in \S~\ref{sec:volume} we show that in any given block the walk cannot spend time much larger than its volume. In \S~\ref{subsec:relaxed} we collect all these ingredients and prove Theorem~\ref{thm:relaxed}.

\subsection{The demon supermartingale}\label{sec:super}

Our goal is to prove the kind of spatial independence required to boost the probabilities for the induction step. Concretely, we show that Assumption~\hyperref[h2]{\bf{(H2)}} with the demon gives an estimate on $k$ blocks that is a $k^\textrm{th}$ power of the estimate on one block given by the induction assumption.

There is a game-theoretic approach to this problem which we will not pursue. Imagine a backgammon grand master playing against multiple opponents. At each step the grand master can choose which board to play, and of course, what to play. She needs to beat all her opponents. It is intuitively obvious that, in fact, her ability to choose which opponent to play, and her ability to use information from other boards for the current board are useless. She just needs to play the optimal strategy in each board, independently. This can be proved formally by backward induction from terminal states of all boards back to the beginning of the game. We skip the details, which are easy. Our grandmaster is the walk in $[-4R,4R]^d$, the $k$ boards are $k$ different $r$-boxes and `winning a board' means making this box bad, i.e.~in~$\mathcal B$. 

Instead of this game-theoretic approach, we will use a more probabilistic point of view, using supermartingales. Let us first fix some notations. We have `two demons' involved --- one in scale $r$, which we get from the induction assumption, and one in scale $R$, which we need to control in order to prove that Assumption~\hyperref[h2]{\bf{(H2)}} holds at time $T$ (recall \S~\ref{sec:demon}). We denote them by $\demon^r$ and $\demon^R$, respectively.

Fix a block $\Lambda =u+[-r,r]^d$ from $\mathcal Q$ and let $\Lambda ^+:=u+[-4r,4r]^d$. Let $\demon^r$ be a demon with respect to $\Lambda^+$. Let $\Omega (\demon^r)$ be the event that $W_{\demon^r}$ satisfies the event from Assumption~\hyperref[h2]{\bf{(H2)}} for the block $\Lambda$, namely
\begin{equation*}
    \Omega (\demon^r):= \bigg\{ \!\! \begin{array}{cc}
        \forall t\le \spend (\Lambda )-r^{\epsilon } \text{ such that }W_{\demon^r} (t)\in \Lambda   \\
          \text{ we have }\big|(t,t+r^{\epsilon })\cap \, \textrm{Rel}_{\demon^r} (r) 
 \big| \ge  0.9r^{\epsilon }+1 
    \end{array}\!\!\! \bigg\}.
\end{equation*}
We wish to condition $\Omega (\demon^r)$ on $W_{\demon^r}[0,t]$ for some $t$, but we have to be a little careful, because a given teleporter $\gamma \in \mathscr{P}(\Lambda ^+) $ might be incompatible with the demon. Namely, every possible value of $\gamma$ has various values of $\demon^r$ for which
$
\PP(W_{\demon^r}[0,t]=\gamma)=0
$
and conditioning is impossible (simply because $\gamma$ might already have several entries into $\Lambda^+$ while $\demon^r$ would have entered from a different edge). Thus, whenever we condition on $W_{\demon^r}[0,t]$, we implicitly assume this is not the case.

With this convention, define
\begin{equation}\label{eq:sup}
    P(\Lambda ,\gamma ):= \sup _{\demon^r}\,\mathbb P \big( \Omega (\demon^r) ^c \,\big|\,W_{\demon^r}[0,\len \gamma]=\gamma\big),
\end{equation}
where, again, the supremum is taken only on values of $\demon^r$ compatible with $\gamma$.
In words, this is the probability that the event in the inductive hypothesis will not be satisfied inside $\Lambda $ given initial trajectory  $\gamma$ and given a worst case, adversarial future strategy.

We now return the demon $\demon^R$ outside the box $[-4R,4R]^d$. Consider
\[
M_\Lambda(t)=P\big( \restrict{\Lambda,W_{\demon^R}[0,t]}{\Lambda^+} \big).
\]
In other words, we take $W_{\demon^R}$ (up to time $t$), which is a random walk in $[-4R,4R]^d$ (with various places where it exits this box and is returned somewhere) and restrict it to $\Lambda^+$. The restriction gives  a teleporting path in $\Lambda^+$ with, again, various places where it exits $\Lambda^+$ and is returned somewhere. We feed this path into $P$, which calculates the probability that $\Omega (\demon^r)$ will fail with the worse \emph{local} demon which is compatible with the existing path.

\begin{lem}\label{lem:supermartingale}
    The process $M_\Lambda (t)$ is a supermartingale.
\end{lem}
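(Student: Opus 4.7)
The plan is to establish the pointwise inequality $\mathbb E[M_\Lambda(t+1)\mid\mathcal F_t]\le M_\Lambda(t)$, with $\mathcal F_t:=\sigma(W_{\demon^R}[0,t])$ and $\gamma_t:=\restrict{W_{\demon^R}[0,t]}{\Lambda^+}$, by viewing $P(\Lambda,\gamma)$ as the value function of a Markov Decision Process in which the controller is the local demon $\demon^r$ and the state is the teleporter-so-far. The structural observation driving the proof is that, pointwise in $\mathcal F_t$, the conditional distribution of the transition $\gamma_t\to\gamma_{t+1}$ induced by the scale-$R$ walk $W_{\demon^R}$ is a feasible first-step action of some (possibly randomised) $\demon^r$ compatible with $\gamma_t$, so the supremum defining $P(\Lambda,\gamma_t)$ dominates the expected value of $P(\Lambda,\gamma_{t+1})$.

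To make this precise, first I would split into cases. If the step $W_{\demon^R}(t)\to W_{\demon^R}(t+1)$ does not affect the restriction (both endpoints far from $\Lambda^+$), then $\gamma_{t+1}=\gamma_t$ and there is nothing to prove. Otherwise $\gamma_{t+1}$ appends a single vertex in $\overline{\Lambda^+}$ to $\gamma_t$, and the $\mathcal F_t$-conditional distribution $\mu$ of this new vertex falls under one of two headings: (a) an ORRW transition, when $W_{\demon^R}(t)\in\Lambda^+$, in which case $\mu$ depends only on $\gamma_t$ and any $\demon^r$ compatible with $\gamma_t$ already produces $\mu$ for its next step (the local demon has no say in internal ORRW moves); or (b) a reentry distribution produced by the external dynamics of $W_{\demon^R}$ outside $\Lambda^+$ (ORRW outside $\Lambda^+$, possibly composed with $\demon^R$ further out), in which case a scale-$r$ demon can emulate $\mu$ by using its external randomisation $\Omega$ to sample the reentry vertex or edge in $\partial\Lambda^+$ from the same law $\mu$.

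Then the argument would proceed by a standard $\varepsilon$-optimal concatenation. For each possible one-step extension $\gamma_{t+1}$ of $\gamma_t$, I would pick an $\varepsilon$-optimal demon $\demon^r_*$ compatible with $\gamma_{t+1}$, i.e.\ one satisfying $\mathbb P(\Omega(\demon^r_*)^c\mid W_{\demon^r_*}[0,\len\gamma_{t+1}]=\gamma_{t+1})\ge P(\Lambda,\gamma_{t+1})-\varepsilon$. Splicing these selections with the first-step sampler for $\mu$ yields a single demon $\tilde\demon^r$ compatible with $\gamma_t$, built by dedicating one coordinate of its external randomisation to drawing $\gamma_{t+1}$ from $\mu$ and another coordinate to running the selected $\demon^r_*$ thereafter. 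By construction,
\[
\mathbb P\bigl(\Omega(\tilde\demon^r)^c\bigm| W_{\tilde\demon^r}[0,\len\gamma_t]=\gamma_t\bigr)=\mathbb E_{\gamma_{t+1}\sim\mu}\bigl[\mathbb P(\Omega(\demon^r_*)^c\mid\gamma_{t+1})\bigr]\ge\mathbb E[P(\Lambda,\gamma_{t+1})\mid\mathcal F_t]-\varepsilon,
\]
so taking the supremum over demons on the left and sending $\varepsilon\to 0$ gives the supermartingale bound.

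The main obstacle, as I see it, is the careful measurable construction of $\tilde\demon^r$. One must splice the honestly random reentry draw coming from the external $W_{\demon^R}$ dynamics together with a measurable selection $\gamma_{t+1}\mapsto\demon^r_*$ of $\varepsilon$-optimal continuations, and verify that the resulting object is a bona fide demon in the sense of \S\ref{sec:demon}, respecting the vertex-or-edge output convention and the adjacency restriction when the output is a vertex. This is in the same spirit as the coupling in the proof of Lemma~\ref{lem:abstract} and is pure probabilistic abstract nonsense, but it is the only spot where the argument is not a one-line appeal to the Bellman optimality inequality for the underlying MDP.
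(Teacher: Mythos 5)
Your proof is correct and rests on the same structural fact as the paper's proof --- namely that whatever the scale-$R$ walk does to the restriction $\restrict{W_{\demon^R}[0,t]}{\Lambda^+}$ in one step is a feasible (hence sub-optimal) behaviour of some local demon compatible with $\gamma_t$ --- but the implementation is genuinely different. The paper uses the same internal/external case split and argues: for an internal ORRW step, $\PP(\Omega(\demon^r)^c\mid\gamma_t)$ is a Doob martingale for each fixed compatible $\demon^r$, and the supremum is attained by a single maximiser whose identity does not depend on the path, giving equality of conditional expectations; for a step from outside, it simply observes that $\gamma_{t+1}$ is compatible with a smaller set of demons than $\gamma_t$, so the supremum in \eqref{eq:sup} decreases deterministically --- no emulation or randomisation is needed there. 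Your unified route ($\varepsilon$-optimal continuations spliced behind either the ORRW first-step law in case (a) or a randomised emulation of the reentry law in case (b)) is the standard one-step Bellman inequality for the MDP, and is arguably more careful than the paper's appeal to the existence and path-independence of a maximiser; it does use, as you note and as the paper's definition permits, that demons may randomise. Two small points: the measurable-selection worry you flag is vacuous here, because $\gamma_{t+1}$ ranges over a finite set of one-step extensions (the new data is a vertex of $\overline{\Lambda^+}$ or an edge of $\partial\Lambda^+$), so the splice involves only finitely many choices; and in case (b) the event that the step actually enters $\Lambda^+$ is not $\mathcal F_t$-measurable, so you should condition further on it --- the no-entry branch contributes exactly $M_\Lambda(t)$, and you emulate only the conditional-on-entry law --- a one-line adjustment that your construction already accommodates.
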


\begin{proof} Denote by $\mathcal F_t$ the $\sigma$-field generated by $W_{\demon^R}(0),\dots, W_{\demon^R}(t)$. It is enough to show that $\EE[M_\Lambda(t+1) \mid \mathcal F_t]\le M_\Lambda(t)$.
  Suppose first that $W_{\demon^R}(t)\in \Lambda$. In this case, for every compatible $\demon^r$ we have that
  \[
  \PP\big(\Omega (\demon^r)^c\,\big|\,\restrict{W_{\demon^R}[0,t]}{\Lambda^+}\big)
  \]
  is a Doob martingale (a.k.a.\ an information exposure martingale). The set of $\demon^r$ compatible with the past does not change from $t$ to $t+1$, nor does the identity of the maximiser (the maximiser is the function that assigns to each path the best entry point, so it does not depend on which path is used). Hence, in this case $\mathbb E [M_{\Lambda }(t+1)\mid \mathcal F _t]=M_{\Lambda }(t)$.

  Next, suppose that $W_{\demon^R}(t)\notin \Lambda $. If $W_{\demon^R}(t+1)\notin \Lambda $ then $M_\Lambda (t+1)=M_\Lambda (t)$ and if $W_{\demon^R}(t+1)\in \Lambda $ then $M_\Lambda (t+1)\le M_\Lambda (t)$ since the walk did not necessarily enter from the worst possible edge (the supremum in \eqref{eq:sup} is taken over a smaller set). Hence, when $W_{\demon^R}(t)\notin\Lambda$ we have that $\mathbb E [M_{\Lambda }(t+1)\mid \mathcal F _t]\le M_{\Lambda }(t)$, completing the proof of the lemma.
\end{proof}

\begin{lem}\label{lem:product}
    Let  $\Lambda _1,\dots ,\Lambda _k\in \mathcal Q$ be $k$ blocks with $d(\Lambda _i^+,\Lambda _j^+)\ge 2$ for all $i\neq j$. Then
    \begin{equation*}
        \mathbb P \big( \forall i\le k, \  \Lambda _i\in \mathcal B \big) \le e^{-ck \log ^2 T}. 
    \end{equation*}
\end{lem}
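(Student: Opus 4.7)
The plan is to form the product supermartingale $M(t):=\prod_{i=1}^{k}M_{\Lambda_i}(t)$ and apply optional stopping, exploiting the separation $d(\Lambda_i^+,\Lambda_j^+)\ge 2$ to ensure that at most one factor changes per time step.

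First, I would verify that $M$ is a supermartingale. Condition on $\mathcal{F}_t$. The sets $\Lambda_i^+$ being pairwise disjoint, at most one index $i^*$ satisfies $W_{\demon^R}(t)\in\Lambda_{i^*}^+$, and $W_{\demon^R}(t+1)$ lies in at most one $\Lambda_{j^*}^+$ (no teleportation occurs strictly inside $[-4R,4R]^d$, and any teleportation from outside lands near $\partial[-4R,4R]^d$, hence outside every $\Lambda_j^+\subset[-2R,2R]^d$). For $j\notin\{i^*,j^*\}$, the separation gives $d(W_{\demon^R}(t),\Lambda_j^+)\ge 2$ and $d(W_{\demon^R}(t+1),\Lambda_j^+)\ge 1$, so a direct check of the teleporter restriction definition (including its one-step look-ahead clause) yields $\restrict{W_{\demon^R}[0,t+1]}{\Lambda_j^+}=\restrict{W_{\demon^R}[0,t]}{\Lambda_j^+}$, and hence $M_{\Lambda_j}(t+1)=M_{\Lambda_j}(t)$ pathwise. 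For the single possibly changing factor, Lemma~\ref{lem:supermartingale} gives $\mathbb{E}[M_{\Lambda_{i^*}}(t+1)\mid\mathcal{F}_t]\le M_{\Lambda_{i^*}}(t)$, and factoring the unchanged terms out of the conditional expectation produces $\mathbb{E}[M(t+1)\mid\mathcal{F}_t]\le M(t)$.

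Second, I would bound $M(0)$. By translation invariance of ORRW, applying~\hyperref[h2]{\bf{(H2)}} at the smaller time $4r^2\le T$ (whose scale is $R'=r$) to a translate of $\Lambda_i^+$ gives $\sup_{\demon^r}\mathbb{P}(\Omega(\demon^r)^c)\le e^{-\log^2(4r^2)}$, where the supremum is over all demons (which in particular may start at any vertex in $\Lambda_i^+$). Hence $M_{\Lambda_i}(0)\le e^{-\log^2(4r^2)}$ regardless of the value of $W_{\demon^R}(0)$; and since $r\ge R^\delta$ with $R\ge\sqrt{T}/3$, this is bounded by $e^{-c\log^2 T}$. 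Taking the product over $i$ gives $M(0)\le e^{-ck\log^2 T}$.

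Third, I would connect $M$ to $\{\forall i,\,\Lambda_i\in\mathcal{B}\}$. Define the stopping time
\[
\tau_i:=\min\!\big\{t\ge 0:\exists\, t_0\le\spend(\Lambda_i)\wedge(t-r^\epsilon),\ W_{\demon^R}(t_0)\in\Lambda_i,\ \big|(t_0,t_0+r^\epsilon)\cap\DemonRel(r)\big|<0.9r^\epsilon+1\big\},
\]
with $\tau_i=\infty$ if no such $t$ exists, so that $\{\Lambda_i\in\mathcal{B}\}=\{\tau_i<\infty\}$. Since $r$-local relaxedness of a time $t_0$ with $W_{\demon^R}(t_0)\in\Lambda_i$ looks only at balls of radius $\le r$ around $W_{\demon^R}(t_0)$, which are contained in $\Lambda_i^+$, the witness for $\Lambda_i\in\mathcal{B}$ is fully encoded in $\restrict{W_{\demon^R}[0,\tau_i]}{\Lambda_i^+}$. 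Consequently, any demon $\demon^r$ compatible with this teleporter has $\Omega(\demon^r)^c$ of conditional probability $1$, so $M_{\Lambda_i}(\tau_i)=1$. Setting $\tau:=\max_i\tau_i$, on $\{\forall i,\,\Lambda_i\in\mathcal{B}\}$ we have $\tau<\infty$ and $M(\tau)=1$. Applying the optional stopping theorem to $\tau\wedge N$ and letting $N\to\infty$ (using $M\in[0,1]$ and Fatou) gives
\[
\mathbb{P}(\forall i,\,\Lambda_i\in\mathcal{B})=\mathbb{P}(\tau<\infty,\,M(\tau)=1)\le\mathbb{E}\!\left[M(\tau)\mathbf{1}_{\{\tau<\infty\}}\right]\le M(0)\le e^{-ck\log^2 T}.
\]
The most delicate step will be justifying the pathwise constancy $M_{\Lambda_j}(t+1)=M_{\Lambda_j}(t)$ for unvisited blocks: the separation hypothesis $\ge 2$ (rather than $\ge 1$) is tight precisely because of the one-step look-ahead in the teleporter restriction.
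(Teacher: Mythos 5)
Your proposal is correct and takes essentially the same route as the paper: the product $M(t)=\prod_i M_{\Lambda_i}(t)$ is a supermartingale because the separation of the $\Lambda_i^+$ guarantees that at most one factor can change at each step, and each $M_{\Lambda_i}(0)\le e^{-c\log^2T}$ by Assumption \hyperref[h2]{\bf{(H2)}} at time $4r^2$. The only (cosmetic) difference is in the closing step: you apply optional stopping at the first time a badness witness for each block is recorded in $\restrict{W_{\demon^R}}{\Lambda_i^+}$, forcing $M_{\Lambda_i}=1$ there, whereas the paper lets $t\to\infty$ and uses that $M(t)$ converges to the indicator of the all-bad event; both versions are valid and of the same level of rigor.
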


\begin{proof}
  Define the process
\begin{equation*}
    M(t):=\prod _{j=1}^k M_{\Lambda _j} (t).
\end{equation*}
Of course, in general a product of supermartingales is not necessarily a supermartingale, but in this case it is. Indeed, we showed during the proof of Lemma~\ref{lem:supermartingale} that $M_{\Lambda _j}(t)$ is fixed whenever $d(W_{\demon^R}(t), \Lambda _j^+)>1$ and therefore, using that $\Lambda _j^+$ are separated, it follows that only one of the processes $M_{\Lambda _j }(t)$ changes in each time step, from which it follows that $M$ is a supermartingale. We obtain that 
\[
    \mathbb P \big( \forall i\le k, \ \Lambda _i\in \mathcal B \big) = \lim _{t\to \infty }  \mathbb E [M(t)] \le \mathbb E [M(0)] \le e^{-ck \log ^2 T},
    \]
    where the equality holds because we are in a finite box, so by infinite time we enter all the $\Lambda_i$ enough times to know if $\Omega (\demon^r)$ happened or not, and $M$ is either 0 or 1. In the last inequality we used that $M_{\Lambda _j}(0)\le e^{-c\log ^2 T}$ for all $j\le k$, which follows from our inductive Assumption~\hyperref[h2]{\bf{(H2)}} at time $4r^2$. 
\end{proof}

\begin{cor}\label{cor:bad}
    We have that $\mathbb P \big(  |\mathcal B| \ge \log T \big) \le e^{-c\log ^3T}$.
\end{cor}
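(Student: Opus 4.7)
The plan is to extract from $\mathcal{B}$ a sub-collection of pairwise well-separated blocks (so that Lemma~\ref{lem:product} applies) and take a union bound. The key combinatorial input is a packing observation: since each $\Lambda^+ = u + [-4r,4r]^d$ is a cube of side $8r$ whose center lies on the lattice $2r\mathbb{Z}^d$, the set of blocks $\Lambda' \in \mathcal{Q}$ with $d(\Lambda^+,\Lambda'^+)<2$ has size at most some $C_d$ depending only on $d$. By a standard greedy selection, any set of $N$ blocks in $\mathcal{Q}$ contains a sub-collection of size at least $N/C_d$ whose members are pairwise at distance $\ge 2$.

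Set $k := \lceil (\log T)/C_d\rceil$. If $|\mathcal{B}| \ge \log T$ then $\mathcal{B}$ contains $k$ pairwise well-separated blocks, so
\begin{equation*}
\mathbb{P}\big(|\mathcal{B}| \ge \log T\big) \le \sum_{\substack{\Lambda_1,\dots,\Lambda_k \in \mathcal{Q} \\ d(\Lambda_i^+,\Lambda_j^+)\ge 2}} \mathbb{P}\big(\forall i,\ \Lambda_i \in \mathcal{B}\big) \le |\mathcal{Q}|^k \cdot e^{-ck\log^2 T},
\end{equation*}
where the second inequality is Lemma~\ref{lem:product}. Now $|\mathcal{Q}| \le (2R/r)^d \le R^d \le T^{d/2}$, so
\begin{equation*}
|\mathcal{Q}|^k \cdot e^{-ck\log^2 T} \le \exp\!\big(\tfrac{d}{2} k \log T - c k \log^2 T\big) \le e^{-c' k \log^2 T} \le e^{-c''\log^3 T},
\end{equation*}
using $T \ge a^{-1/(2d)}$ (so $\log T$ is large enough to absorb the polynomial prefactor) and $k \ge (\log T)/C_d$.

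This is essentially a mechanical packing plus union bound argument; the real work has been done in Lemma~\ref{lem:product}, whose proof exploits the demon supermartingale $M$ and the fact that the individual factors $M_{\Lambda_j}$ evolve at disjoint times once the $\Lambda_j^+$ are separated. There is no substantive obstacle in the present corollary — one just needs to be a little careful that the constants ($C_d$ from the packing, $c$ from the induction hypothesis, and the $d/2$ from $|\mathcal{Q}|$) compose to give a positive exponent $c''$ in front of $\log^3 T$.
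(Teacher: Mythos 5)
Your proof is correct and follows essentially the same route as the paper's: extract a pairwise-separated subcollection of size $\gtrsim \log T$ from $\mathcal B$ (the paper does this by sorting centers into $5^d$ congruence classes mod $10r\mathbb Z^d$ and applying pigeonhole, you do it by a greedy packing bound, but these are interchangeable), then apply Lemma~\ref{lem:product} and a union bound over subsets of $\mathcal Q$ of that size. The constant bookkeeping at the end ($|\mathcal Q|^k \le e^{C\log^2 T}$, absorbed because $T \ge a^{-1/(2d)}$ is large) is also the same.
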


\begin{proof}
If $|\mathcal B|\ge \log T$ then by the pigeonhole principle, we can find a subset $\mathcal B' \subseteq \mathcal B$ with $|\mathcal B'| = \lceil (\log T)/5^d \rceil $ such that any pair of blocks $\Lambda ,\Lambda '\in \mathcal B'$ satisfies $d(\Lambda ^+ ,(\Lambda ')^+)\ge 2$. Indeed, the blocks $\Lambda =u+[-r,r]^d \in \mathcal B \subseteq  \mathcal Q$ can be sorted into $5^d$ congruence classes according to \mbox{$u(\!\!\!\!\mod  10r \mathbb Z ^d)$}, with one of these classes containing at least $(\log T)/5^d$ of the blocks in $\mathcal B$. Clearly, within such a class, all blocks are separated as needed.

Thus, taking a union bound over all subsets $\mathcal B' \subseteq \mathcal Q$ of size $\lceil (\log T)/5^d \rceil$ such that $\min _{\Lambda ,\Lambda '\in \mathcal B'}d(\Lambda ^+ ,(\Lambda ')^+)\ge 2$, and using Lemma~\ref{lem:product} with $k:=\lceil (\log T)/5^d \rceil$ we obtain
\[
\mathbb P \big( |\mathcal B | \geq \log T \big)\le \sum _{\mathcal B '} \mathbb P\big( \text{every } \Lambda \in \mathcal B' \hbox{ is bad} \big) = \sum _{\mathcal B'} e^{-ck\log ^2T}  \leq  Ce^{-c\log ^3T},
\]
where in the last inequality we used that the number of subsets $\mathcal B'\subseteq \mathcal Q$ of size $k$ is bounded by
$|\mathcal Q|^k \leq e^{C\log ^2 T}$. The corollary follows (the $C$ can be removed since $T\ge a^{-1/(2d)}$ is sufficiently large).
\end{proof}

\subsection{The time spent in most blocks}\label{sec:spend}

In this section we bound the size of $\mathcal A\setminus \mathcal B$. Before stating and proving the lemma, let us explain why this is not immediate corollary of Theorem~\ref{thm:heavy} or, indeed, of any type of capacity argument. Theorem~\ref{thm:heavy} bounds the number of edges traversed in each $r$-block, but not the amount of time. Theoretically, it is possible that $\W$ spent a lot of time in a small box, going over and over the same small area. This would be difficult to preclude using a capacity argument like the one used in Theorem \ref{thm:heavy}, because the capacity does not increase when the walk goes over the same space over and over. So we need a different argument, which is supplied by Lemma \ref{lem:mathcal A}.

We no longer need the notation $\demon^r$, so whenever we write $\demon$ we mean the demon living outside $[-4R,4R]^d$ (what was denoted by $\demon^R$ in \S\ref{sec:super}).

\begin{lem}\label{lem:mathcal A}
    Let $k:=\lceil \log T \rceil $ and let $\Lambda _1,\dots ,\Lambda _k\in \mathcal Q$  such that  $d(\Lambda _i,\Lambda _j)\ge (\log ^2T)r$ for all $i\neq j$. Then, we have that 
    \begin{equation*}
        \mathbb P \big( \forall i \le k, \ \Lambda _i\in \mathcal A \setminus \mathcal B \big)\le e^{-c\log ^3T}.
    \end{equation*}
\end{lem}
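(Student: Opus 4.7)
I would combine the escape argument from Lemma~\ref{lem:Np} with an effective-independence argument analogous to Lemma~\ref{lem:product}.

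For the single-block bound, I would first show that for each $\Lambda_i$ the probability of $\Lambda_i\in\mathcal A\setminus\mathcal B$ is at most $e^{-c\log^2 T}$. If $\Lambda_i\in\mathcal A\setminus\mathcal B$, the walk accumulates $r^2\log^3 r$ time in $\Lambda_i$ while the good-block property holds, so by pigeonholing into chunks of length $r^2\log r$ I can define a sequence of stopping times $\zeta_1<\zeta_2<\cdots$, each a relaxed time with $\W\in\Lambda_i$, with at least $\log^2 r$ of them before $\spend(\Lambda_i)\le\inner$. At each $\zeta_j$, Lemma~\ref{lem:avoiding} couples $\W$ (using fresh i.i.d.\ uniforms) with an independent ORRW $W_j'$ starting from $\W(\zeta_j)$. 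By Lemma~\ref{lem:3.3} and the transience of Brownian motion in $d\ge 6$, $W_j'$ has probability at least some dimension-dependent constant $p>0$ of leaving $\Lambda_i^+$, reaching distance $\ge(\log^2 T)r$, and not returning to $\Lambda_i^+$ for time $\gg r^2\log^3 r$. A single successful escape would prevent further accumulation of time in $\Lambda_i$, so all $\log^2 r$ attempts must have failed, giving probability $(1-p)^{\log^2 r}\le e^{-c\log^2 r}\le e^{-c\log^2 T}$ after absorbing $\delta^2$ into $c$ (using $\log r\approx\frac{\delta}{2}\log T$).

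For the joint bound across $k$ blocks, the pairwise distance $d(\Lambda_i,\Lambda_j)\ge(\log^2 T)r$ ensures that $\W$ is near at most one $\Lambda_i^+$ at any time, and a successful escape from $\Lambda_i$ carries $\W$ far from every other block for time $\gg r^2\log^3 r$. Ordering the $\ge k\log^2 r$ escape attempts (across all blocks) by the stopping times at which they occur, each attempt has success probability $\ge p$ conditional on the past---the coupling in Lemma~\ref{lem:avoiding} uses fresh randomness, and the good-block property together with the no-heavy-blocks conclusion of Theorem~\ref{thm:heavy} keeps the configuration in the regime where the escape estimate applies. A filtration/binomial-tail argument as in Lemma~\ref{lem:Np} then bounds the probability that all attempts fail by $e^{-ck\log^2 r}\le e^{-c\log^3 T}$, as required.

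\textbf{Main obstacle.} The delicate step is justifying the uniform $\ge p$ conditional escape probability after a long sequence of previously-failed attempts in possibly many different blocks. Lemma~\ref{lem:avoiding} needs the relaxed-time property at each $\zeta_j$, which follows from $\Lambda_i\notin\mathcal B$, but one also needs that the walk's cumulative past (including visits to other $\Lambda_j$'s) has not made the local geometry hostile to the Brownian approximation in Lemma~\ref{lem:3.3}. The no-heavy-blocks estimate from Theorem~\ref{thm:heavy}, combined with the large spatial separation $(\log^2 T)r$ between blocks---which guarantees that a successful escape from $\Lambda_i$ really does remove $\W$ from the neighbourhood of every $\Lambda_j$ for long enough---should close this loop, but the bookkeeping of the filtration and the precise definition of the escape event require care.
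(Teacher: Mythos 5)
Your overall outline (escape attempts at relaxed times, coupling via Lemma~\ref{lem:avoiding}, Brownian approximation, binomial tail) is in the right neighbourhood, but the central claim that a single successful escape ``would prevent further accumulation of time in~$\Lambda_i$'' is false, and this leaves a genuine gap. Your escape event is that the coupled fresh walk leaves $\Lambda_i^+$, reaches distance $(\log^2 T)r$, and does not return for time $\gg r^2\log^3 r$. But $\gg r^2\log^3 r$ is nowhere near long enough: after the coupling window closes, $\W$ will re-enter $[-4R,4R]^d$ (the demon reinserts it whenever it exits), diffuse, and may return to $\Lambda_i$; more time then accumulates, $\spend(\Lambda_i)$ can still be reached, and $\Lambda_i$ can still end up in $\mathcal A$. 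Even replacing ``for time $\gg r^2\log^3 r$'' by ``never'' does not help, because in the demon-walk setting the walk almost surely exits $[-4R,4R]^d$ eventually and the demon can aim the re-entry at $\Lambda_i$, so ``never returns'' has probability zero. Your ``obstacle'' paragraph correctly flags the filtration bookkeeping, but the actual missing ingredient is a deterministic bound on how many escapes can succeed at all.

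The paper closes exactly this hole by strengthening the escape event $\Omega_j$ to require that during $[\zeta_j+R^2,\zeta_j+2R^2]$ the walk stays inside $[-R,R]^d$. Since $\inner$ is by definition the first time the walk has spent $R^2\log^3 R$ steps in $[-R,R]^d$, each successful $\Omega_j$ consumes $R^2$ of a budget of size $R^2\log^3 R$, so there can be at most $\log^3 R$ successes before $\inner$ no matter what the demon does or how many times the walk returns to $\Lambda_i$. The paper also avoids your two-stage (single block, then joint) scheme: it defines a single sequence of stopping times $\zeta_j$ over the union $\bigcup_i\Lambda_i^+$, shows via $\mathcal A$ and $\mathcal B$ that at least $j_0\approx\log^4 r$ of them occur before $\inner$, and then, since at most $\log^3 R$ of the independent-$c$-probability events $\Omega_j$ can hold, the binomial tail bound gives $e^{-cj_0}\le e^{-c\log^3 T}$. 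The spatial separation $(\log^2 T)r$ between the $\Lambda_i$ is then used only to make the Brownian escape event avoid all the other boxes (Lemma~\ref{lem:Brownian2}), not to engineer an effective-independence/product structure as in Lemma~\ref{lem:product}. If you replace your escape event with the paper's and use the $\inner$ budget for termination, the rest of your outline can be made to work.
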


\begin{proof}
The idea is that if $\Lambda _i$ is good then every time we enter it before $\spend (\Lambda _i)$ we will quickly find an $r$-locally relaxed time which is actually $R$-locally relaxed since bigger blocks are not heavy by Theorem~\ref{thm:heavy}. From that time we have a positive probability to escape $\Lambda _i$ in time $r^2$, avoid $\bigcup _{i\le k} \Lambda _i$, enter $[-R,R]^d$ and spend $R^2$ time in there. This event cannot happen more than $\log ^3R$ times before $\inner$. Hence, in order to have $\spend (\Lambda _i)\le \inner $ for all $i\le k$, the event above, that happens with positive probability, will have to fail too many times, which is very unlikely.

More precisely, define a sequence of stopping times by $\zeta _0=0$ and inductively for $j\ge 1$
    \begin{equation*}
        \zeta _{j}= \min \Big\{ t> \zeta _{j-1} +r^2 : \W(t)\in \smash{\bigcup _{i\le k}}\Lambda _i^+\text{ and } t\in \DemonRel (R)  \Big\}.
    \end{equation*}
First, we prove that $\W$ has a positive chance to spend $R^2$ time inside the inner box $[-R,R]^d$ in the interval $[\zeta _j,\zeta _{j+1}]$. Define the event
\begin{equation*}
    \Omega _j := \Big\{ \W[\zeta _j+r^2, \zeta _j +2R^2 ] \cap   \smash{\bigcup _{i\le k}}\Lambda _i^+ =\emptyset \Big\} \cap \big\{ \W[\zeta _j+ R^2, \zeta _j +2R^2 ]\subseteq [-R,R]^d  \big\}.  
\end{equation*}
We claim that $\mathbb P (\Omega _j \mid \mathcal F _{\zeta _j })\ge c$, where $\mathcal F_t :=\sigma (\W(0),\dots \W(t))$. To this end, let $W'$ be a once-reinforced walk starting from $\W(\zeta _j)$ and independent of $\mathcal F _{\zeta _j
}$ except for its starting location (formally, we shift an independent once-reinforced walk starting from $0$ by $\W(\zeta _j)$). Define the event
\begin{equation*}
    \Omega _j ':= \Big\{ W'[r^2, 2R^2 ] \subseteq (\W(\zeta _j) +[-R,R]^d) \setminus  \smash{\bigcup_{i\le k}}\rule[-1.8ex]{0pt}{0pt}\Lambda _i^+  \Big\} \cap \big\{  W'[R^2, 2R^2 ] \subseteq [-R,R]^d  \big\}.  
\end{equation*}
It follows from the Brownian approximation that $\mathbb P (\Omega _j'\mid \mathcal F _{\zeta _j})\ge c$. Indeed, by Lemma~\ref{lem:3.3}, there is a coupling of $W'$ with a Brownian motion $B$ with diffusive constant $\sigma ^2 \in [\frac{0.9}{d},\frac{1.1}{d}]$ starting from $\W(\zeta _j)$ such that the event $\mathcal G :=\big\{\!\max _{t\le 2R^2}\|W'(t)-B(t)\|\le (t+r^2)^{1/3+5\epsilon  } \big\}$ holds with probability at least $1-e^{-c\log ^2T}$. Define the event
\begin{equation}\label{eq:Brownian2}
    \Omega _j '':= \Big\{  B[r^2, 2R^2 ]^+ \subseteq  (\W(\zeta _j) +[-R,R]^d) \setminus  \smash{\bigcup_{i\le k}}\Lambda _i^+  \Big\} \cap \big\{  B[R^2, 2R^2 ]^+ \subseteq [-R,R]^d  \big\},
\end{equation}
where $B[a,b]^+$ is the set $\big\{ y\in \mathbb R^d : \exists t\in [a,b] , \ \|y-B(t)\|\le t^{2/5} \big\}$, i.e.\ an inflating Wiener sausage. A straightforward Brownian motion estimate shows that $\mathbb P (\Omega _j''\mid \mathcal F _{\zeta _j} ) \ge c$. For completeness, we prove this in the appendix in Lemma \ref{lem:Brownian2} (in this lemma $x=\W(\zeta _j)$ and we drop the conditioning using that $B$ is independent of $\mathcal F _{\zeta _j}$ except for its starting position). We have that $\Omega _j'' \cap \mathcal G \subseteq \Omega _j ' $ and therefore $\mathbb P (\Omega _j'\mid \mathcal F _{\zeta _j} ) \ge c$.

In order to bound the probability of $\Omega _j$ we need to show that $W'$ couples with $\W$. This is done as in Lemma~\ref{lem:avoiding}. Define the set $A:=\W [0,\zeta _j] \cap (\W(\zeta _j)+[-R,R]^d) $ and the event $\mathcal E := \big\{ \forall s\in [a^{-2/3},2R^2], \ W'(s)\notin A \big\}$. By Lemma~\ref{lem:A} with $L=a^{-1/3}$ we have that $\mathbb P (\mathcal E)\ge 1-Ca^{1/9}$ (here it was important that we defined $\zeta_j$ using times relaxed at scale $R$). Let $\mathcal H := \{\forall s\le \min (a^{-2/3}, 2R^2), \  W'(s)=\W (\zeta _j +s) \}$ and observe that $\mathbb P (\mathcal H)\ge 1-Ca^{1/3}$. On the event $\Omega _j'\cap \mathcal E \cap \mathcal H$, the walk $\W[\zeta _j,\zeta _j+2R^2]$ will not exit $\W(\zeta _j) +[-R,R]^d$ and will not interact with its history there, and therefore $W'(s)=\W(\zeta _j +s)$ for all $s\le 2R^2$. We obtain that $\Omega _j'\cap \mathcal E \cap \mathcal H\subseteq \Omega _j$. This completes the proof that $\mathbb P (\Omega _j \mid \mathcal F _{\zeta  _j})\ge c$.

Next, we let $j_0:=\lfloor \log ^4r \rfloor $ and claim that 
\begin{equation}\label{eq:s}
\begin{split}
    \big\{ \forall i\le k, \ \Lambda _i &\in  \mathcal A \setminus \mathcal B \big\} \cap \{\tau _{\rm heavy } > \inner \} \\
    &\subseteq   \big\{ \big|\{j\le j_0: \zeta _j \le \inner \text{ and }\Omega _j \text{ fails}\}\big| \ge j_0-\log ^3R \big\}
\end{split}
\end{equation}
To see this, suppose that the left hand side of \eqref{eq:s} holds and define the set of times 
\begin{equation}
 S:=\big\{t\le \inner -r^{\epsilon }: \exists i\le k \text{ with }\W(t) \in \Lambda _i \text{ and } t\le \tau _{\rm spend}(\Lambda _i)-r^\epsilon  \big\}. 
\end{equation}
By the definition of $\mathcal A$ \eqref{eq:AB} we have that $|S|\ge k(r^2\log ^3r-r^{\epsilon })\ge 3r^2\log ^4r$. Moreover, by the definition of $\mathcal B$, for any $t\in S$ we have that $[t,t+r^\epsilon ] \cap \DemonRel(r)\neq \emptyset $ and since $t+r^{\epsilon }\le \inner <  \tau _{\rm heavy}$ we actually have $[t,t+r^\epsilon ] \cap \DemonRel(R)\neq \emptyset $. Hence, for any $j\ge 1$, if $t\in S\cap [\zeta _j+r^2,\zeta _{j+1}]$ then $\zeta _{j+1}\in [t,t+r^{\epsilon }]$ and therefore $|S\cap [\zeta _j,\zeta _{j+1}]| \le r^2+r^{\epsilon } \le  2r^2$. Thus, at least $j_0+1$ of the stopping times $\zeta _j$ must occur before $\inner$. However, the event $\Omega _j$ implies that $\W$ spends at least $R^2$ time in $[-R,R]^d$ in the time interval $[\zeta _j,\zeta _{j+1}]$ and therefore $\Omega _j$ cannot occur more than $\log ^3R$ times before $\inner$. This completes the proof of \eqref{eq:s}. Using \eqref{eq:s} we obtain 
\begin{equation*}
\begin{split}
    \mathbb P \big(& \forall i \le k, \ \Lambda _i\in \mathcal A \setminus \mathcal B \big) \\
    &\le \mathbb P (\inner \ge \tau _{\rm heavy}) +\mathbb P \big( \big| \{j\le j_0 : \zeta _j\le \inner \text{ and } \Omega _j \text{ fails}\}\big| \ge j_0 -\log ^3R \big) \\
    &\le e^{-c\log ^3T} + \mathbb P \big( {\rm Bin}(j_0,1-c) \ge j_0-\log ^3R \big) \le 2e^{-c\log ^3T},
\end{split}
\end{equation*}
where in the second inequality we used Theorem~\ref{thm:heavy} and in the last inequality we used a standard tail bound for the Binomial distribution (see, e.g., \cite[Corollary~A.1.14]{alon2016probabilistic}).
\end{proof}

\begin{cor}\label{cor:spend}
    We have that $\mathbb P \big( |\mathcal A \setminus \mathcal B| \ge (\log T)^{3d} \big)\le e^{-c\log ^3T}$.
\end{cor}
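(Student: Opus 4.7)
The plan is to mimic the proof of Corollary~\ref{cor:bad}, replacing the $5^d$-class decomposition (which only enforced distance $\ge 2$) with a finer decomposition that enforces the distance $\ge (\log^2 T)r$ required by Lemma~\ref{lem:mathcal A}.

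First, I would carry out a pigeonhole step. Set $k:=\lceil\log T\rceil$ and $M:=\lceil\log^2 T\rceil$. Every $\Lambda=u+[-r,r]^d\in\mathcal Q$ has $u\in 2r\mathbb Z^d$, so sorting blocks by the congruence class of $u$ modulo $2Mr\mathbb Z^d$ partitions $\mathcal Q$ into at most $M^d\le (\log T)^{2d+o(1)}$ classes. Any two blocks in the same congruence class are separated (in $\ell_\infty$, hence in $\ell_2$) by at least $2Mr\ge (\log^2 T)r$. Hence on the event $\{|\mathcal A\setminus\mathcal B|\ge(\log T)^{3d}\}$, at least one congruence class contains a subset $\mathcal S\subseteq \mathcal A\setminus\mathcal B$ of size $k$, and the blocks in $\mathcal S$ are pairwise $(\log^2 T)r$-separated.

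Second, I would bound the event by a union bound over all candidate subsets of $\mathcal Q$ of size $k$ that satisfy the separation. By Lemma~\ref{lem:mathcal A} applied to each such subset,
\[
\mathbb P\big(|\mathcal A\setminus\mathcal B|\ge(\log T)^{3d}\big)
\le \sum_{\mathcal S}\mathbb P\big(\forall i\le k,\ \Lambda_i\in\mathcal A\setminus\mathcal B\big)
\le \binom{|\mathcal Q|}{k}\,e^{-c\log^3 T}.
\]
Since $|\mathcal Q|\le C(R/r)^d\le T^{C}$ we get $\binom{|\mathcal Q|}{k}\le |\mathcal Q|^k\le e^{Ck\log T}\le e^{C\log^2 T}$. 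This logarithmic-entropy factor is absorbed by the $e^{-c\log^3 T}$ tail, yielding $\mathbb P(|\mathcal A\setminus\mathcal B|\ge(\log T)^{3d})\le e^{-c'\log^3 T}$ for a slightly smaller $c'$, as claimed.

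There is essentially no obstacle: both ingredients (the pigeonhole separation and Lemma~\ref{lem:mathcal A}) are already in place, and the only thing to verify is that the entropy of choosing $k\sim\log T$ blocks from $\mathcal Q$ is of order $\log^2 T$, which is indeed dominated by the $\log^3 T$ exponent in Lemma~\ref{lem:mathcal A}. This is precisely the same scheme as Corollary~\ref{cor:bad}, just with a polynomially larger separation scale and a correspondingly larger threshold $(\log T)^{3d}$ instead of $\log T$.
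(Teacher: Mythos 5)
Your proof is correct and takes essentially the same route as the paper, which simply says ``identical to Corollary~\ref{cor:bad} with a larger modulus $2\lceil\log^2 T\rceil r\mathbb Z^d$'': you sort into $\approx(\log T)^{2d}$ congruence classes to enforce the $(\log^2 T)r$ separation, pigeonhole to extract $k=\lceil\log T\rceil$ separated blocks from $(\log T)^{3d}$, and absorb the $|\mathcal Q|^k\le e^{C\log^2 T}$ entropy from the union bound into the $e^{-c\log^3 T}$ bound of Lemma~\ref{lem:mathcal A}. The only cosmetic slip is that same-class blocks are separated by $2(M-1)r$ rather than $2Mr$ (the boxes have radius $r$), but this still exceeds $(\log^2 T)r$ for large $T$.
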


\begin{proof}
    The proof is identical to Corollary~\ref{cor:bad} but with a slightly larger separation between the blocks required in Lemma~\ref{lem:mathcal A}. This is done by sorting the blocks $\Lambda =u+[-r,r]^d \in \mathcal A\setminus \mathcal B$ into congruence classes $u(\!\!\!\!\mod  2\lceil \log ^2T \rceil r \mathbb Z ^d)$ instead of $u(\!\!\!\!\mod  10r \mathbb Z ^d)$. This gives $c(\log T)^d$ separated blocks which is more than $k=\lceil \log T \rceil $ from Lemma~\ref{lem:mathcal A}, as long as $T\ge a^{-1/(2d)}$ is sufficiently large. 
\end{proof}

\subsection{Volume exhaustion}\label{sec:volume}

\begin{lem}[Volume exhaustion]\label{lem:Volume exhaustion}
    Before $\inner$ we visit every vertex in $[-2R,2R]^d$ at most $r^{2d}$ times with probability at least $1-e^{-c\log ^3T}$. Namely, we have that 
    \begin{equation*}
        \mathbb P \big( \exists x\in [-2R,2R]^d : |\{s\le \inner : \W(s)=x \}|\ge r^{2d}  \big)\le e^{-c\log ^3T}.
    \end{equation*}
\end{lem}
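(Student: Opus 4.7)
The plan is to union-bound over the at most $(4R+1)^d \le e^{O(\log T)}$ vertices in $[-2R,2R]^d$, reducing the statement to showing that for each fixed $x$ the probability of more than $r^{2d}$ visits before $\inner$ is at most $e^{-c\log^4 T}$. For each $x$, the bound will come from a stochastic domination of the visit count by a geometric random variable.

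Fix $x \in [-2R, 2R]^d$ and let $N_x := |\{s \le \inner : \W(s) = x\}|$. Let $\sigma_1 < \sigma_2 < \cdots$ be the successive visit times of $\W$ to $x$, and set $\mathcal F_t := \sigma(\W[0,t])$. The heart of the proof is an escape bound: there is a constant $c = c(d) > 0$ such that, for every $k \ge 1$ and $a$ sufficiently small,
\[
  \mathbb P\big(\sigma_{k+1} \le \inner \,\big|\, \mathcal F_{\sigma_k}\big) \le 1 - c \qquad \text{on the event } \{\tau_{\rm heavy} > \sigma_k\}.
\]
Iterating this (and using that $\{\tau_{\rm heavy} > \inner\} \subseteq \{\tau_{\rm heavy} > \sigma_k\}$ whenever $\sigma_k \le \inner$), one obtains $\mathbb P(N_x \ge r^{2d}) \le (1-c)^{r^{2d}-1} + \mathbb P(\tau_{\rm heavy} \le \inner)$. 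By Theorem~\ref{thm:heavy} the last term is at most $e^{-c\log^3 T}$, and since $r^{2d} = R^{2d\delta}$ dominates any power of $\log T$ for $R$ sufficiently large, the union bound over $x$ produces the overall $e^{-c\log^3 T}$ bound.

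The escape bound will follow by coupling the continuation $\{\W(\sigma_k + s)\}_{s \ge 0}$ with an independent fresh ORRW $W'$ starting at $x$ in virgin environment, through the natural coupling. The argument has two ingredients. \emph{(a) Fresh ORRW escapes $x$:} Lemma~\ref{lem:A} applied with $A = \{x\}$ (trivially nowhere heavy) and $L = a^{-1/3}$ gives $\mathbb P(\exists t \ge L^2 : W'(t) = x) \le Ca^{1/9}$; for short times $[1, L^2]$ one couples $W'$ with a simple random walk $X$ starting at $x$, and the coupling succeeds throughout $[0,L^2]$ with probability $\ge 1 - CaL^2 = 1 - Ca^{1/3}$ by a direct comparison of step probabilities. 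Since SRW on $\mathbb Z^d$ for $d \ge 3$ escapes its origin forever with probability $1/G_d(0,0) > 0$, we conclude $\mathbb P(W' \text{ never revisits } x) \ge c$ for $a$ small. \emph{(b) Coupling with $\W$:} Following the proof of Lemma~\ref{lem:avoiding}, the natural coupling between $\W(\sigma_k + \cdot)$ and $W'$ succeeds with probability $\ge 1 - Ca^{1/9}$; here the nowhere-heavy property of $\W[0,\sigma_k]$ (from $\{\tau_{\rm heavy} > \sigma_k\}$) replaces the relaxed-time hypothesis in Lemma~\ref{lem:avoiding}, which is precisely what is needed to apply Lemma~\ref{lem:A} to $A = \W[0,\sigma_k]$. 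Combining (a) and (b) via inclusion-exclusion gives the escape bound.

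The main obstacle is the adaptation of Lemma~\ref{lem:avoiding} to visit times $\sigma_k$ that need not be relaxed — in particular, handling the first step from $x$, at which the reinforced-edge configuration of $\W$ already differs from the virgin environment of $W'$. This will be addressed by a direct total-variation bound of order $a$ on the two first-step distributions, and by using the nowhere-heavy property of $\W[0,\sigma_k]$ to control the later discrepancies via Lemma~\ref{lem:A}, much as in the proof of Lemma~\ref{lem:avoiding}.
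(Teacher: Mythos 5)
Your approach has a genuine gap at the step where you replace the relaxed-time hypothesis of Lemma~\ref{lem:avoiding} by the nowhere-heavy property coming from $\{\tau_{\rm heavy}>\sigma_k\}$. The relaxed-time condition (and Lemma~\ref{lem:A}, which needs $|A\cap[-r',r']^d|\le (r')^{\kappa+\nu}$ for \emph{all} $r'\ge a^{-1/3}$) controls the density of the history at every scale down to $a^{-1/3}$. By contrast, $\tau_{\rm heavy}$, and hence nowhere-heaviness, only constrains scales $r'\in[R^{\delta},R]$. In the regime relevant for your lemma, the history $\W[0,\sigma_k]$ can be, and in an adversarial scenario will be, nearly saturated in a ball of radius much smaller than $R^{\delta}$ around $x$ — precisely because $x$ has been visited $\sim r^{2d}$ times. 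In that ball the volume is of order $(r')^d$, far above $(r')^{\kappa+\nu}$ since $\kappa+\nu<d$. So Lemma~\ref{lem:A} does not apply, the event $\Omega_2$ of Lemma~\ref{lem:avoiding} cannot be shown to have probability $\ge 1-Ca^{1/9}$, and your escape bound $\mathbb P(\sigma_{k+1}\le\inner\mid\mathcal F_{\sigma_k})\le 1-c$ is left unproved. In fact this escape bound is considerably stronger than what the paper proves, and it is not clear that it holds: the demon has substantial freedom to design the history at the sub-$R^{\delta}$ scale, and there is no available estimate ruling out a configuration that sends $\W$ back to $x$ with probability close to $1$ at each visit.

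The paper sidesteps this difficulty with a weaker but provable claim. It does not try to escape forever; instead, it shows that at each return to $x$, with probability at least $cr^{1-d}$ the walk crosses the \emph{nearest virgin edge} $e_i$ before returning to $x$ (Claim~\ref{claim:simple}). The key point making this a clean argument is that the ball $D(x,r_i)$, where $r_i$ is the distance to the nearest virgin edge, is \emph{entirely} reinforced, so inside it $\W$ is exactly a simple random walk and Claim~\ref{claim:simple} applies. Non-heaviness guarantees $r_i<r$. Since the block $x+[-r,r]^d$ contains at most $Cr^{\kappa}<r^d$ new edges before it becomes heavy, $r^{2d}$ returns would produce, with very high probability, $r^d$ successes — a contradiction. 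This is why the threshold in the lemma is $r^{2d}$ rather than the much smaller $\log$-power your argument would yield: the per-visit success probability is $r^{1-d}$, not a constant. Your strategy would be an improvement if you could establish the constant-probability escape bound, but as written the reliance on nowhere-heaviness where a genuine relaxed-time estimate is needed makes that step unavailable.
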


For the proof of the lemma we will need the following claim about a simple random walk. As in Lemma \ref{lem:3.4}, we will use $D(0,L)$ to denote a Euclidean ball of radius $L$.

\begin{claim}[Exiting through a specific edge]\label{claim:simple}
    Let $X$ be a simple random walk starting from the origin and let $L>0$. For any edge $e$ connecting $D(0,L)$ to $\mathbb Z^d\setminus D(0,L)$, the probability that $X$ exits $D(0,L)$ for the first time through $e$ before returning to $0$ is at least $cL^{1-d}$.
\end{claim}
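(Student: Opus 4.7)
The proof combines reversibility of simple random walk with a two-step reduction to a standard Green-function estimate. We may assume $L$ is sufficiently large, as the claim is trivial otherwise. Write $e = \{u, v\}$ with $u \in D(0, L)$ and $v \notin D(0, L)$.

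By reversibility: for any path $\gamma = (0, x_1, \ldots, x_{k-1} = u, v)$ with $x_1, \ldots, x_{k-1} \in D(0, L) \setminus \{0\}$, the SRW from $0$ follows $\gamma$ with probability $(2d)^{-k}$, the same as the reversed path from $v$. Summing over such paths and applying the Markov property after the first reversed step $v \to u$ yields
\[
\PP_0\bigl(X \text{ exits via } e \text{ before returning to } 0\bigr) = \frac{1}{2d}\,\PP_u\bigl(T_0 < T_{\partial D(0, L)}\bigr),
\]
reducing the claim to $\PP_u(T_0 < T_{\partial D(0, L)}) \ge cL^{1-d}$.

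Next, I would find an interior neighbor $w$ of $u$ with $L - |w|$ bounded below by a positive constant. Since $u$ has a neighbor outside $D(0, L)$, the triangle inequality gives $|u| > L - 1$. Letting $i^* = \arg\max_i |u_i|$ (so $|u_{i^*}| \ge |u|/\sqrt d > (L - 1)/\sqrt d$) and taking $w := u - \mathrm{sign}(u_{i^*})\, e_{i^*}$, a direct computation yields $|w|^2 \le L^2 - 2(L-1)/\sqrt d + 1$, whence $L - |w| \ge c_0 := 1/(2\sqrt d)$ for $L$ large. The strong Markov property then gives
\[
\PP_u(T_0 < T_{\partial D}) \ge \PP_u(X_1 = w)\cdot\PP_w(T_0 < T_{\partial D}) = \tfrac{1}{2d}\,\PP_w(T_0 < T_{\partial D}).
\]

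It remains to show $\PP_w(T_0 < T_{\partial D(0, L)}) \ge cL^{1-d}$ for any $w \in D$ with $L - |w| \ge c_0$. Via the identity $\PP_w(T_0 < T_{\partial D}) = G_D(w, 0)/G_D(0, 0)$, where $G_D(0, 0) = \Theta(1)$ for $d \ge 3$, it suffices to show $G_D(w, 0) \ge cL^{1-d}$. Using the decomposition $G_D(w, 0) = G(w, 0) - \EE_w[G(X_{T_{\partial D}}, 0)]$ together with the standard SRW Green-function asymptotic $G(x, 0) = c_d|x|^{2-d} + O(|x|^{-d})$ (for $d \ge 3$), and Taylor-expanding $|x|^{2-d}$ about $L$, the leading $c_d L^{2-d}$ terms cancel and leave
\[
G_D(w, 0) = c_d(d-2)\, L^{1-d}\bigl((L - |w|) + \EE_w[|X_{T_{\partial D}}| - L]\bigr) + O(L^{-d}) \ge cL^{1-d},
\]
where we used $L - |w| \ge c_0$ together with the nonnegativity of the overshoot to absorb the subleading error for $L$ large. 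The main obstacle is precisely this cancellation: both terms in the decomposition are $\Theta(L^{2-d})$, so the desired $L^{1-d}$ bound emerges only from subleading Taylor contributions, requiring the $O(|x|^{-d})$ Green-function error to be sharp enough not to swamp the signal.
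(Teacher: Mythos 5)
Your proof is correct and follows essentially the same route as the paper: a time-reversal step reduces the exit-through-$e$ event to the probability that a walk started at distance $O(1)$ inside the sphere hits $0$ before leaving $D(0,L)$, and that probability is bounded below via the Green's function asymptotics $G(x)=c_d\|x\|^{2-d}+O(\|x\|^{-d})$, exploiting the gap $\|w\|^{2-d}-L^{2-d}\ge cL^{1-d}$. The only differences are cosmetic: you reverse exactly at the edge $e$ (getting the clean factor $\tfrac1{2d}$) and phrase the estimate through $G_D(w,0)=G(w,0)-\EE_w[G(X_{\tau_D},0)]$ and $\PP_w(T_0<\tau_D)=G_D(w,0)/G_D(0,0)$, whereas the paper reverses to a nearby point $y$ with $\|y\|\in[L-2,L-1]$ and runs optional stopping for the martingale $G(Y(t\wedge\tau))$ --- the same computation in different packaging.
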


\begin{proof}
    Consider the random walk Green's function defined by 
    \begin{equation*}
        G(x):=\sum _{t=1}^{\infty} \mathbb P (X(t)=x).
    \end{equation*}
    It is well known (see, e.g., \cite[Theorem~4.3.1]{lawler2010random}) that  $G(x)=c_1\|x\|^{2-d}+O(\|x\|^{-d})$, as $x$ tends to infinity for some explicit, dimension dependent constant $c_1$.
    
    We may assume that $L$ is sufficiently large. Let $y\in \mathbb Z ^d$ such that $\|y\|_2\in [L-2,L-1]$ and let $Y$ be a simple random walk starting from $y$. The function $G$ is harmonic everywhere except for zero and therefore the process $G(Y(t\wedge \tau ))$ is a martingale where 
    \begin{equation*}
        \tau :=\min \big\{t\ge 0 : Y(t)=0 \text{ or }Y(t)\notin D(0,L) \big\}.
    \end{equation*}
    Since $\tau $ is finite almost surely and $G$ is bounded it follows that 
    \begin{equation*}
        c_1\|y\|^{2-d}-C\|y\|^{-d} \le G(y)=\mathbb E [G(Y(\tau ))]\le G(0)\cdot \mathbb P (Y(\tau )=0)+ c_1L^{2-d}+CL^{-d} .
    \end{equation*} 
    Using that $\|y\|^{2-d}-L^{2-d}\ge cL^{1-d}$ we obtain $\mathbb P (Y(\tau )=0)\ge cL^{1-d}$ for large $L$. Inverting time, this means that the random walk $X$ starting from $0$, has probability at least $cL^{1-d}$ to reach $y$ before returning to zero or exiting $D(0,L)$. Choosing $y$ at a constant distance from the edge $e$ finishes the proof of the claim (since once $X$ reaches $y$ it has a constant probability to take the right steps and exit through $e$). 
\end{proof}

\begin{proof}[Proof of Lemma~\ref{lem:Volume exhaustion}]
    The idea is that if $\W$ visits a vertex $x$ more than $r^{2d}$ times, then it is very likely that it completely covered the block $\Lambda =x+[-r,r]^d$ but we already know that there are no heavy blocks before $\inner$ with high probability. More precisely, let $\zeta _i$ be the $i^{\rm th}$ return to $x$. Let $y_i$ be the closest point in Euclidean distance from $x$ such that there is an edge $e_i$ containing $y_i$ that was never crossed before time $\zeta _i$. Let $\eta _i$ be the first time after $\zeta _i$ in which $\W$ crosses $e_i$. We claim that on the event $\zeta _i<\tau _{\rm heavy}$ we have that
    \begin{equation}\label{eq:88}
        \mathbb P \big( \eta _i \le \zeta _{i+1} \mid \mathcal F _{\zeta _i} \big) \ge cr^{1-d}.
    \end{equation}
    To see this, observe that after $\zeta _i$, the walk $\W$ is a simple random walk until it exits the Euclidean ball $D(x,r_i)$ of radius $r_i=\|x-y_i\|_2$, since all the edges in $D(x
    ,r_i)$ are reinforced. Moreover, if $\zeta _i<\tau _{\rm heavy}$ then $r_i<r$ since otherwise the block $\Lambda =x+[-r,r]^d$ is heavy. Thus, by Claim~\ref{claim:simple} the walk $\W$ has probability at least $cr_i^{1-d}\ge cr^{1-d}$ to exit the ball $D(x,r_i)$ through the edge $e_i$ before returning to $x$, finishing the proof of \eqref{eq:88}. 
    
    The number of new edges $e_i$ in $\Lambda $ that can be crossed before $\tau _{\rm heavy}(\Lambda )\le \tau _{\rm heavy}$ is at most $Cr^{\kappa }<r^d$. Thus, letting $k=r^{2d}$ we have by \eqref{eq:88} 
    \begin{equation*}
    \begin{split}
        \mathbb P ( \zeta _k \le  \tau _{\rm heavy} ) \le \mathbb P \big( \big| \big\{ i\le k : \zeta _i<\tau _{\rm heavy} \text{ and } \eta _i\le \zeta _{i+1} \big\}\big| \le r^d \big)\\
        \le \mathbb P \big( {\rm Bin}(k,cr^{1-d}) \le r^d \big) \le e^{-cr},
    \end{split}
    \end{equation*}
    where in the last inequality we used, e.g., \cite[Corollary~A.1.14]{alon2016probabilistic}. By Theorem~\ref{thm:heavy} this shows that $\mathbb P (\zeta _k\le \inner)\le e^{-c\log ^3T}$, finishing the proof of the lemma.
\end{proof}

\subsection{Relaxed times}\label{subsec:relaxed}
By now we have all the ingredients required for the proof of  Theorem~\ref{thm:relaxed}.

\begin{proof}[Proof of Theorem~\ref{thm:relaxed}]
Define the set of times 
\begin{equation*}
    S:= \big\{ s\le \inner -r^{\epsilon } : \W (s)\in \Lambda \text{ for some }\Lambda \in \mathcal A \cup \mathcal B \big\}
\end{equation*}
and note that by the definition of $\mathcal A$ and $\mathcal B$ \eqref{eq:AB}, for any $s\notin S$ with $s\le \inner -r^{\epsilon }$ we have
\begin{equation}\label{eq:2}
 \big| (s,s+r^{\epsilon })\cap \DemonRel (r) \big|\ge 0.9r^{\epsilon }+1.   
\end{equation}
By Lemma~\ref{lem:Volume exhaustion}, Corollary~\ref{cor:spend}, and Corollary~\ref{cor:bad} we have that 
\begin{equation}\label{eq:S}
    \mathbb P \big( |S| \ge r^{3d} \big) \le e^{-c\log ^3T}.
\end{equation}
Next, we fix $t\le \inner -R^{\epsilon }$ and show that there are many relaxed times in $(t,t+R^{\epsilon })$. Define a sequence of times by $s_0:=\min \{s\ge t:s\notin S\}$ and inductively for $i\ge 1$
\begin{equation*}
    s_i:=\min \{s\ge s_{i-1}+r^{\epsilon }-1 : s\notin S\}.
\end{equation*}
Let $k$ be the last $i$ for which $s_i\le t+R^{\epsilon }-r^{\epsilon }$. We let $I_i:=(s_i,s_i+r^{\epsilon })\cap \mathbb Z$ (so that the $I_i$ are disjoint) and observe that by \eqref{eq:2} we have $|I_i\setminus \DemonRel(r)|\le 0.1r^{\epsilon }-1$. If in addition $\tau _{\rm heavy}>\inner $ then in fact these times are relaxed at scale $R$, not just $r$, i.e.~we have $|I_i\setminus \DemonRel(R)|\le 0.1r^{\epsilon }-1$. 

Define $I\coloneqq(t,t+R^{\epsilon })\cap \mathbb Z$. Observe that any $s\in I$ is inside one of the intervals $I_i$, unless $s\in S$ or $s>t+R^{\epsilon }-r^{\epsilon }$. Thus, on the event that $\tau _{\rm heavy}>\inner $ and $|S|<r^{3d }$ we have 
\begin{equation*}
    \big| I \setminus \DemonRel (R) \big| \le |S|+r^{\epsilon } + \sum _{i=1}^{k}    \big| I_i \setminus \DemonRel (R) \big| \le r^{3d }+r^\epsilon + k(0.1r^{\epsilon }-1)  \le 0.1 R^{\epsilon } -2,  
\end{equation*}
where in the last inequality we used that $kr^{\epsilon }<R^{\epsilon }$ since the $I_i$ are disjoint (so $k\le\lceil R^\eps-1\rceil/\lceil r^\eps-1\rceil$) while on the other hand $k\approx R^{\epsilon }/r^{\epsilon } \gg r^{3d }$ since $r=R^\delta$ and $\delta \le \epsilon /(10d)$\label{pg:del_eps}. This completes the proof of Theorem \ref{thm:relaxed} using Theorem~\ref{thm:heavy} and \eqref{eq:S}. As explained on page \pageref{sec:demons}, Theorem \ref{thm:step} also follows.
\end{proof}

\section{Proof of Theorem~\ref{thm:1}}\label{pg:proof1}

    Using the induction base in Lemma~\ref{lem:base} and the induction step in Theorem~\ref{thm:step}, we get that \hyperref[h1]{\bf{(H1)}} and~\hyperref[h2]{\bf{(H2)}} hold at any time $T\ge T_0$. It follows immediately from Assumption~\hyperref[h1]{\bf{(H1)}} on transition probabilities that $W$ is transient. Next, we prove that $W$ scales to a Brownian motion.
    For any $T\ge T_0$ let $\sigma _T:=\sqrt{\var (W(T)_1)/T}$ where  $W(T)_1$ is the first coordinate of $W(T)$. Once the induction has been established for all $T\ge T_0$, it follows from the further clause of Proposition~\ref{prop:var} that $\sigma _T$ is a Cauchy sequence (be careful that in \S~\ref{sec:coupling} the standard deviation $\sigma _T$ is simply denoted by $\sigma $). The main clause of Proposition~\ref{prop:var} implies that the limit, $\sigma $, satisfies that $\sigma \in [\frac{0.9}{\sqrt{d}},\frac{1.1}{\sqrt{d}}]$. We now apply Lemma~\ref{lem:coupling}. Since Lemma~\ref{lem:coupling} holds for any $T\ge a^{-1/(2d)} $ and $t\le T$, we can replace $\sigma _T$ from that lemma with the limiting $\sigma $. This gives that for all $t\ge 1$, there is a coupling of $W$ with a standard Brownian motion $B$ in $\mathbb R ^d$ such that 
    \begin{equation*}
        \mathbb P \Big( \max _{s\le t} \|W(s)-\sigma B(s)\| \ge t^{1/3+3\epsilon } \Big) \le e^{-c\log ^2t}.
    \end{equation*}
    From this estimate the convergence clearly follows.\qed

\appendix
\section{Brownian estimates}

The next standard lemma follows from the fact that $\|x\|_2^{2-d}$ is harmonic, so $\|B(t)\|_2^{2-d}$ is a martingale. For a complete proof see, e.g., \cite[Theorem~3.18]{morters2010brownian}.

\begin{lem}\label{lem:Rr}
    Let $d\ge 3$ and let $B$ be Brownian motion in $\mathbb R ^d$ starting from some $x\in \mathbb R ^d$. Let $r\in (0,\|x\|_2)$ and $R\in (\|x\|_2,\infty ]$. Let $\tau :=\inf \big\{ t>0 :\|B(t)\|_2\in \{r,R\} \big\}$. Then, 
    \begin{equation}
        \mathbb P \big(  \|B(\tau )\|_2 =r  \big) = \frac{ \|x\|_2^{2-d}-R^{2-d}}{r^{2-d}-R^{2-d} }.
    \end{equation}
\end{lem}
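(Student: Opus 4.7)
The plan is to exploit the fact that $u(x) = \|x\|_2^{2-d}$ is harmonic on $\mathbb{R}^d \setminus \{0\}$ for $d \ge 3$. A direct computation with the Laplacian confirms this (each partial derivative contributes $(2-d)x_i\|x\|^{-d}$ and a second differentiation gives a sum that telescopes to zero). So by It\^o's formula, the process $M_t := u(B(t \wedge \tau))$ is a local martingale on the annulus $\{r \le \|y\|_2 \le R\}$, where $B$ never hits the origin (since $d \ge 3$).

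First I would handle the case $R < \infty$. Here $\tau$ is almost surely finite (Brownian motion exits any bounded domain in finite time), and the stopped process is bounded: $M_t \in [R^{2-d}, r^{2-d}]$. Hence $M_t$ is a bounded martingale and the optional stopping theorem applies, giving
\begin{equation*}
\|x\|_2^{2-d} = u(x) = \mathbb{E}[u(B(\tau))] = r^{2-d} \cdot \mathbb{P}(\|B(\tau)\|_2 = r) + R^{2-d} \cdot \mathbb{P}(\|B(\tau)\|_2 = R).
\end{equation*}
Combining this with $\mathbb{P}(\|B(\tau)\|_2 = r) + \mathbb{P}(\|B(\tau)\|_2 = R) = 1$ and solving the linear system yields the claimed formula.

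For the case $R = \infty$, I would take a limit $R \to \infty$. Let $\tau_R$ be the stopping time with finite outer radius $R$ and let $\tau_\infty$ be the stopping time with no outer sphere. The event $\{\|B(\tau_R)\|_2 = r\}$ increases to $\{\tau_\infty < \infty\}$ as $R \to \infty$, using transience of Brownian motion in $d \ge 3$ (which guarantees $\|B(t)\|_2 \to \infty$ almost surely, so the walk either hits the inner sphere in finite time or escapes to infinity forever). Taking $R \to \infty$ in the finite-$R$ formula, the term $R^{2-d} \to 0$ (since $d \ge 3$), and monotone convergence gives
\begin{equation*}
\mathbb{P}(\|B(\tau_\infty)\|_2 = r) = \frac{\|x\|_2^{2-d}}{r^{2-d}},
\end{equation*}
which matches the stated formula with $R^{2-d} = 0$.

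The only subtlety I expect is verifying the integrability needed for optional stopping at $\tau_R$, but this is immediate from the boundedness of $u$ on the closed annulus. No major obstacle here: this is a classical martingale computation and the result is precisely what the stated reference \cite[Theorem~3.18]{morters2010brownian} records.
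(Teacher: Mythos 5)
Your argument is correct and is exactly the route the paper has in mind: it invokes the harmonicity of $\|x\|_2^{2-d}$, hence the (bounded, stopped) martingale $\|B(t\wedge\tau)\|_2^{2-d}$, optional stopping on the annulus, and the $R\to\infty$ limit for the $R=\infty$ case, which is precisely the standard proof the paper delegates to \cite[Theorem~3.18]{morters2010brownian}. The appeal to transience in the limiting step is not even needed (the events $\{\|B(\tau_R)\|_2=r\}$ increase to the hitting event directly), but it does no harm.
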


The following standard lemma is taken from \cite[Lemma~9.2]{elboim2022infinite}.

\begin{lem}\label{l:BM.ball.exit}
Let $\{B(t)\}_{t\ge 0}$ be Brownian motion in $\mathbb R ^d$ with diffusive constant $\sigma \in [1/M,M]$ starting from some $x\in \mathbb  R^d$. Then, for all $s\ge r>0$ we have
\[
\mathbb P\Big( \inf_{t\geq s^2} \|B(t)\|_2 \leq r \Big) \leq C_M (r/s)^{d-2}.
\]
\end{lem}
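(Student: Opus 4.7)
The plan is to condition on $B(s^2)$ via the strong Markov property. The post-$s^2$ process $\{B(s^2+t)-B(s^2)\}_{t\ge 0}$ is an independent Brownian motion with diffusive constant $\sigma$, so Lemma~\ref{lem:Rr} (taking $R=\infty$) gives, for any $y$ with $\|y\|_2 > r$,
\[
\mathbb P\big(\inf_{t\ge s^2}\|B(t)\|_2\le r \,\big|\, B(s^2)=y\big) = (r/\|y\|_2)^{d-2}.
\]
When $\|y\|_2\le r$ this conditional probability is trivially $1$, and since $(r/\|y\|_2)^{d-2}\ge 1$ in that case, the uniform upper bound $(r/\|y\|_2)^{d-2}$ applies to every $y\ne 0$. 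Taking expectations (and noting $B(s^2)\ne 0$ almost surely, since $d\ge 3$), this yields
\[
\mathbb P\Big(\inf_{t\ge s^2}\|B(t)\|_2\le r\Big) \le r^{d-2}\, \mathbb E\big[\|B(s^2)\|_2^{2-d}\big].
\]

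The remaining task is to prove $\mathbb E[\|B(s^2)\|_2^{2-d}]\le C_M s^{2-d}$ with a constant that is uniform in the starting point $x$. The density of $B(s^2)=x+\sigma s Z$ (with $Z$ standard Gaussian) is bounded in sup-norm by $(2\pi\sigma^2 s^2)^{-d/2}$. Split the expectation at radius $\sigma s$: on the ball $\{\|z\|_2\le \sigma s\}$, bound the density by its sup-norm and use $\int_{\|z\|_2\le R}\|z\|_2^{2-d}\,dz=\omega_{d-1}R^2/2$ (by spherical coordinates, since $r^{2-d}\cdot r^{d-1}=r$) to obtain a contribution $\le c_d(\sigma s)^{2-d}$; on the complement, bound $\|z\|_2^{2-d}\le (\sigma s)^{2-d}$ to get a contribution $\le (\sigma s)^{2-d}$. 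Since $\sigma\ge 1/M$, combining gives $\mathbb E[\|B(s^2)\|_2^{2-d}]\le C_M s^{2-d}$, and substituting into the display above yields the lemma. There is no substantial obstacle; the only subtlety is that the bound on $\mathbb E[\|B(s^2)\|_2^{2-d}]$ must be uniform in $x$, which is what the direct density split delivers (avoiding any appeal to Anderson's inequality or a sharper radial rearrangement).
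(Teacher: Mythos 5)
Your argument is correct. Note, however, that the paper does not prove this lemma at all: it is quoted as standard and attributed to \cite[Lemma~9.2]{elboim2022infinite}, so what you have produced is a self-contained proof rather than a variant of one in the text. Your route is the natural one and it works: condition at time $s^2$, use Lemma~\ref{lem:Rr} with $R=\infty$ to get the exact hitting probability $(r/\|y\|_2)^{d-2}$ (the case $\|y\|_2\le r$ being absorbed since the bound exceeds $1$ there), and then establish $\mathbb E\big[\|B(s^2)\|_2^{2-d}\big]\le C_M s^{2-d}$ uniformly in the starting point $x$ by splitting the Gaussian integral at radius $\sigma s$, using the sup-norm bound $(2\pi\sigma^2 s^2)^{-d/2}$ on the ball and the trivial bound $\|z\|_2^{2-d}\le(\sigma s)^{2-d}$ outside, together with $\sigma\ge 1/M$. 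The only point worth making explicit is that Lemma~\ref{lem:Rr} is stated for Brownian motion without a diffusive constant; since a diffusive constant amounts to the deterministic time change $t\mapsto \sigma^2 t$, which does not alter the range of the path, the hitting probability formula applies verbatim, so this is a one-line remark rather than a gap. Your emphasis on uniformity in $x$ is exactly the right thing to insist on, and the direct density split does deliver it.
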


To state the following lemma we will need some notations. For $x,y\in \mathbb R ^d$ and $r>0$ we define the cylinder ${\rm cyl}(x,y,r)$ by 
\begin{equation}
    {\rm cyl}(x,y,r):=\big\{ z\in \mathbb R ^d: \exists w\in [x,y] \text{ with } \|z-w\|_2\le r \big\},
\end{equation}
where $[x,y]\subseteq \mathbb R ^d$ is the line segment connecting $x$ and $y$. Next, for a Brownian motion $B$ in $\mathbb R ^d$ with $B(0)=x$, we will use the notation $B[0,t]\rightsquigarrow {\rm cyl}(x,y,r)$ to denote the event that $B$ travels through the cylinder ${\rm cyl}(x,y,r)$ in the time interval $[0,t]$. Namely,
\begin{equation*}
    \big\{B[0,t]\rightsquigarrow {\rm cyl}(x,y,r)\big\} := \big\{\forall s\in [0,t],\ B(s)\in  {\rm cyl}(x,y,r) \big\} \cap \big\{ \|B(t)-y\|_2\le r\big\}.
\end{equation*}

\begin{lem}[Traveling in a cylinder]\label{lem:tube}
    For any $M>0$ there exists $c_M>0$ such that the following holds. Let $t>0$ and let $x,y\in \mathbb R ^d$ such that $\|x-y\|_2\le M\sqrt{t}$. Let $B$ be a Brownian motion starting from $x$ with diffusive constant $\sigma \in [1/M,M]$. Then,
    \begin{equation}
        \mathbb P \big( B[0,t] \rightsquigarrow {\rm cyl}(x,y,\sqrt{t}/M)  \big)\ge c_M.
    \end{equation}
\end{lem}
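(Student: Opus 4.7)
The plan is to use Brownian scaling to normalize the problem, dominate the cylinder event by the tube around the linear interpolation from $x$ to $y$, and then split that tube event via the standard Brownian-bridge decomposition $W(s)=sW(1)+Z(s)$.

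First, by Brownian scaling, set $\hat B(s) := (B(ts)-x)/\sqrt t$. This is a Brownian motion starting at the origin with the same diffusive constant $\sigma\in[1/M,M]$, and the event $\{B[0,t]\rightsquigarrow {\rm cyl}(x,y,\sqrt t/M)\}$ is equivalent to $\{\hat B[0,1]\rightsquigarrow {\rm cyl}(0,\hat y,1/M)\}$ with $\hat y:=(y-x)/\sqrt t$ satisfying $\|\hat y\|_2\le M$. So it suffices to prove the claim in this normalized setting.

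Next, I would dominate the cylinder event by the stronger event
$$E := \bigl\{\sup\nolimits_{s\in[0,1]}\|\hat B(s) - s\hat y\|_2 \le 1/(2M)\bigr\}.$$
Indeed, if $E$ holds then for every $s\in[0,1]$ the point $\hat B(s)$ lies within $1/(2M)<1/M$ of the segment point $s\hat y\in[0,\hat y]$, and hence in ${\rm cyl}(0,\hat y,1/M)$; moreover $\|\hat B(1)-\hat y\|_2\le 1/(2M)\le 1/M$. Thus $E$ is contained in the desired event, and it suffices to bound $\PP(E)$ from below.

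To do this, write $\hat B=\sigma W$ for a standard Brownian motion $W$, and decompose $W(s)=sW(1)+Z(s)$ where $Z$ is a standard Brownian bridge on $[0,1]$ independent of $W(1)$. Then
$$\hat B(s) - s\hat y \;=\; s\bigl(\sigma W(1)-\hat y\bigr) + \sigma Z(s),$$
so $E$ is implied by the intersection
$$\mathcal A := \bigl\{\|\sigma W(1)-\hat y\|_2\le 1/(4M)\bigr\} \cap \bigl\{\sup\nolimits_{s\in[0,1]}\|Z(s)\|_2\le 1/(4M\sigma)\bigr\}.$$
By the independence of $W(1)$ and $Z$, $\PP(\mathcal A)$ factors as a product of two probabilities. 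The first factor is the Gaussian probability that a centered vector with covariance $\sigma^2 I$, $\sigma\in[1/M,M]$, falls in a ball of radius $1/(4M)$ around a point of norm at most $M$, which is bounded below by an explicit Gaussian-density computation depending only on $M$ and $d$. The second factor is a small-ball probability for a standard Brownian bridge in a ball of radius at least $1/(4M^2)$, which is a strictly positive constant depending only on $M$ and $d$. Multiplying yields $\PP(E)\ge c_M>0$.

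I anticipate no significant obstacle: the scaling reduction eliminates the parameters $t$ and $x$, and the decomposition $W(s)=sW(1)+Z(s)$ is the key step, since it converts the in-tube estimate along a linear interpolation into two independent elementary small-ball estimates, each trivially uniform in the remaining parameters.
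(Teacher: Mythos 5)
Your proposal is correct, and it reaches the conclusion by a genuinely different route from the paper. The paper, after the same scaling reduction to $t=1$, $x=0$, uses rotational invariance to put $y$ on a coordinate axis, enlarges $M$ to replace the $\ell_2$ norm by the $\ell_\infty$ norm so that the tube event factors over coordinates, reduces to $d=1$, and then cites a textbook exercise (staying in a corridor around the linear function $f(s)=sy$). You instead keep all dimensions together: after scaling you dominate the cylinder event by the tube of radius $1/(2M)$ around the linear interpolation $s\mapsto s\hat y$, and split via the orthogonal decomposition $W(s)=sW(1)+Z(s)$ with $Z$ a Brownian bridge independent of $W(1)$, so that the estimate factors into a Gaussian density lower bound for $\sigma W(1)$ near $\hat y$ (uniform over $\sigma\in[1/M,M]$ and $\|\hat y\|\le M$) and a small-ball probability for the bridge at radius at least $1/(4M^2)$. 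The two inputs you need (positivity of a Gaussian ball probability and of the bridge small-ball probability, the latter reducible to independent one-dimensional bridges) are standard and play the same role as the paper's cited exercise, so the level of rigor is comparable; what your argument buys is that it avoids the rotation and the $\ell_2$-to-$\ell_\infty$ step, at the cost of invoking the bridge decomposition rather than a purely one-dimensional statement. Either way the constant depends only on $M$ and $d$, which is what the lemma requires.
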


\begin{proof}
    We may assume that $x=0$ and by Brownian scaling that $t=1$. By increasing $M$ and scaling we may further assume that $B$ is a standard Brownian motion. By rotational invariance we may assume that $y=(y_1,\dots ,y_d)$ satisfies $y_1>0$ and $y_i=0$ for all $i\in [2,d]$. By increasing $M$ we may replace the norm $\|\cdot \|_2$ with $\|\cdot \|_{\infty }$. After all of these reductions our event becomes a product event which means that we may assume that $d=1$. The remaining case follows from, e.g., \cite[Exercise~1.8]{morters2010brownian} with the linear function $f(t)=ty$.
\end{proof}

\begin{lem}[Escaping into a ball]\label{lem:brownian}
    For any $M\ge 1$ there exists $c_M>0$ such that the following holds. Let $R>0$ and $\ell \in (0,\frac9{10}R]$. Let $x\in \mathbb R ^d$ with $\|x\|<R$ and let $B$ be Brownian motion starting from $x$ with diffusive constant $\sigma \in [1/M,M]$. Then, for all $t\in [\ell ^2/M,M\ell ^2]$ we have that 
    \begin{equation}
        \mathbb P \big( \forall s\le t, \ \|B(s)\|_2\le R\text{ and } \|B(t)\|_2\le R-\ell \big) \ge c_M \cdot  \min \big ((R-\|x\|_2 ) / \ell , 1 \big). 
    \end{equation}
\end{lem}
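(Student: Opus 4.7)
The plan is to split on whether $\delta := R - \|x\|_2$ is at least $\ell/2$ (\emph{Case A}) or smaller (\emph{Case B}); in Case~A it suffices to prove probability $\ge c_M$ (since that implies $\ge c_M \min(\delta/\ell, 1)$), while in Case~B we aim for $\ge c_M\,\delta/\ell$. For Case~A I would apply Lemma~\ref{lem:tube} with endpoint $y := x \cdot (R - \tfrac{11}{10}\ell)/\|x\|_2$ if $\|x\|_2 > R - \tfrac{11}{10}\ell$ and $y := x$ otherwise (the threshold $R - \tfrac{11}{10}\ell$ is positive since $\ell \le \tfrac{9}{10} R$). A short computation gives $\|x - y\|_2 \le \tfrac{6}{10}\ell$, and taking Lemma~\ref{lem:tube}'s parameter $M_1 := 10\sqrt{M}$ makes the tube radius $\sqrt{t}/M_1 \le \ell/10$. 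All points on $[x, y]$ have norm $\le \|x\|_2 \le R - \ell/2$, so $\mathrm{cyl}(x, y, \sqrt{t}/M_1) \subseteq B(0, R)$, while $B(y, \sqrt{t}/M_1) \subseteq B(0, R - \ell)$; Lemma~\ref{lem:tube} then yields the event with probability $\ge c_{M_1}$.

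For Case~B I would use a two-step strong Markov argument. Set $\tau_1 := \inf\{s : \|B(s)\|_2 = R - \ell/2\}$ and $\tau_R := \inf\{s : \|B(s)\|_2 = R\}$, and consider $\mathcal{A} := \{\tau_1 \le t/2 < \tau_R\}$. On $\mathcal{A}$, at time $\tau_1$ the walk lies on the inner sphere $\partial B(0, R - \ell/2)$ with remaining time $\ge \ell^2/(2M)$, so by strong Markov at $\tau_1$ and Case~A applied with $M \to 2M$ (the new distance to the boundary is $\ell/2$, the borderline value of Case~A), the conditional probability of completing the target event is $\ge c_M$. It therefore suffices to prove $\mathbb{P}(\mathcal{A}) \ge c_M\,\delta/\ell$.

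This sub-claim is the main obstacle. Lemma~\ref{lem:Rr} yields $\mathbb{P}(\tau_1 < \tau_R) \ge c_d\,\delta/\ell$ without the time constraint, but the normalized time $T' := 2\sigma^2 t/\ell^2$ can be as small as $2/M^3$, so a Markov-type bound on $\tau_1$ is useless. My plan is to pathwise couple the radial Bessel process $r(s) := \|B(s)\|_2$ with a 1D Brownian motion $\bar{r}^{+}$ of the same variance having constant outward drift $D := (d-1)\sigma^2/R$: since the Bessel drift $(d-1)\sigma^2/(2r)$ is bounded by $D$ on the annulus $[R - \ell/2, R] \subseteq [R/2, R]$, driving both by the same Brownian motion gives $r \le \bar{r}^{+}$ pathwise on $[0, \tau_1^{r}]$, hence $\mathcal{A}^{\bar{r}^{+}} \subseteq \mathcal{A}^{r}$. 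A Girsanov comparison between the drifted $\bar{r}^{+}$ and the driftless $\bar{r}$, restricted to $\mathcal{F}_{\tau_1 \wedge \tau_R}$, then gives $\mathbb{P}(\mathcal{A}^{\bar{r}^{+}}) \ge e^{-C_d M^3}\,\mathbb{P}(\mathcal{A}^{\bar{r}})$, since the Radon--Nikodym derivative $\exp(D(X_\tau - x_0)/\sigma^2 - D^2\tau/(2\sigma^2))$ is bounded below on $\mathcal{A}$ by the deterministic value $X_\tau - x_0 = -\ell/2 + \delta$ together with the estimate $D^2 \tau/\sigma^2 \le C_d M^3$. Finally, for the 1D BM $\bar{r}$ the reflection principle (applied to $\tilde X(s') := (R - \bar{r}(s))/(\ell/2)$ after time rescaling to standard BM) gives
\[
\mathbb{P}(\mathcal{A}^{\bar{r}}) \ge \Phi\bigl(-(1-u_0)/\sqrt{T'}\bigr) - \Phi\bigl(-(1+u_0)/\sqrt{T'}\bigr) \ge c_M\,\delta/\ell,
\]
with $u_0 := 2\delta/\ell$ and $c_M$ of the form $c\,M^{-3/2} e^{-M^3}$. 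Chaining these bounds completes the proof.
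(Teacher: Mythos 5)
Your argument is correct in substance, but it takes a genuinely different route from the paper. The paper's proof is much shorter: it applies Lemma~\ref{lem:Rr} at the stopping time $\tau=\inf\{s>0:\|B(s)\|_2\in\{R-\ell,R\}\}$ to get the gambler's-ruin bound $\mathbb{P}(\|B(\tau)\|_2=R-\ell)\ge c\min((R-\|x\|_2)/\ell,1)$ with \emph{no} time constraint, lets $\Omega_1$ be this event together with confinement to the ball during $[\tau,\tau+t]$ (which has conditional probability bounded below by Lemma~\ref{lem:tube} with $x=y=B(\tau)$), sets $\Omega_2=\{\|B(t)\|_2\le R-\ell\}$, and combines $\Omega_1$ and $\Omega_2$ using the FKG inequality for Bessel processes, both events being decreasing in the radial process. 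You instead handle the time constraint by hand: a tube argument (Lemma~\ref{lem:tube}) when the starting depth is at least $\ell/2$, and, near the boundary, a strong Markov decomposition at the hitting time of radius $R-\ell/2$ combined with a pathwise drift comparison of the radial process with a constant-drift one-dimensional Brownian motion, Girsanov, and the reflection principle. The paper's route buys brevity at the price of citing an FKG inequality for Bessel processes; your route is longer and yields a worse (but perfectly admissible, since only some $c_M>0$ is required) dependence on $M$, but it uses only elementary hitting estimates and no correlation inequality.

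Two small repairs are needed. First, your event $\mathcal{A}=\{\tau_1\le t/2<\tau_R\}$ is not $\mathcal{F}_{\tau_1}$-measurable, so conditioning on it is not a clean strong Markov application; relatedly, the domination $r\le\bar r^{+}$ holds only up to $\tau_1^{r}$, after which $r$ may hit $R$ before time $t/2$, so the stated inclusion $\mathcal{A}^{\bar r^{+}}\subseteq\mathcal{A}^{r}$ is false as written. Both issues disappear if you replace $\mathcal{A}$ throughout by $\mathcal{A}':=\{\tau_1\le (t/2)\wedge\tau_R\}$: this event is $\mathcal{F}_{\tau_1}$-measurable, the coupling does give $\mathcal{A}^{\bar r^{+}}\subseteq(\mathcal{A}')^{r}$ (for $s<\tau_1^{r}\le\tau_1^{\bar r^{+}}$ one has $r(s)\le\bar r^{+}(s)<R$), it suffices for the Case~A completion after $\tau_1$, and your Girsanov and reflection bounds apply to it verbatim on the stopped $\sigma$-field at $\tau_1\wedge\tau_R\wedge (t/2)$, with $\mathbb{P}(\mathcal{A}')\ge\mathbb{P}(\mathcal{A})$. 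Second, in Case~A take the tube parameter $M_1=10M$ rather than $10\sqrt{M}$, so that $\sigma\in[1/M_1,M_1]$ holds for all $M$; the cylinder radius is then still at most $\ell/10$ and nothing else changes.
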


\begin{proof}
    By Lemma~\ref{lem:Rr}, letting $\tau :=\inf \big\{ t>0 :\|B(t)\|_2\in \{R-\ell ,R\} \big\}$, we have that $\mathbb P (\|B(\tau )\|_2=R-\ell ) \ge c\cdot  \min \big( (R-\|x\|_2)/\ell ,1 \big)$, where we note that this probability is one when $\|x\|\le R-\ell $. Once we hit $\tau $ and $\|B(\tau )\|=R-\ell$, we have a positive chance to stay in the ball $\{y:\|y\|_2<R\}$ during the time interval $[\tau ,\tau +t]$ (this follows from, e.g., Lemma~\ref{lem:tube} with $x=y=B(\tau )$). Thus, the event 
    \begin{equation}
        \Omega _1:= \big\{ \|B(\tau )\|_2=R-\ell \text{ and } \forall s\in [\tau ,\tau +t], \ \|B(s)\|_2 \le R  \big\}
    \end{equation}
    holds with probability at least $c\cdot \min \big( (R-\|x\|_2)/\ell ,1 \big)$. Next, let $\Omega _2:= \{\|B(t)\|_2\le R-\ell \}$ and observe that $\mathbb P (\Omega _2)\ge c$, since $B(t)$ is a normal variable with variance of order $ \ell ^2$. Recall that $\{\|B(s)\|_2\}_{s\ge 0}$ is a Bessel process and note that both $\Omega _1$ and $\Omega _2$ are decreasing in $\{\|B(s)\|_2\}_{s\ge 0}$. Thus, by the FKG inequality for Bessel processes \cite[Corollary~1.10]{legrand2024some} we have that $\mathbb P (\Omega _1\cap \Omega _2)\ge c\cdot \min \big( (R-\|x\|_2)/\ell ,1 \big)$. This finishes the proof of the lemma since on $\Omega _1\cap \Omega _2$ the event of the lemma holds. 
\end{proof}

\begin{lem}\label{lem:Om2p}
If $R$ is sufficiently large then $\Omega_2'(x)$ from \eqref{eq:brownian1} satisfies $\mathbb P(\Omega_2'(x))>c$.
\end{lem}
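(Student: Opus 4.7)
The plan is to verify the conditions defining $\Omega_2'(x)$ by a direct Brownian motion computation, using the Markov property to decompose the time interval $[0, 3R^2]$ into three phases and applying Lemma~\ref{lem:tube} in each.

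On $[0, r^2/2]$, Lemma~\ref{lem:tube} with $x_{\mathrm{tube}} = y_{\mathrm{tube}} = x$, $t = r^2/2$, and tube radius $r/8$ (choosing $M$ a large absolute constant) gives $\mathbb{P}(B[0, r^2/2] \subseteq x + [-r/8, r/8]^d) \ge c_M$; since $x \in u + [-r, r]^d$ this implies $B[0, r^2/2] \subseteq u + [-5r/4, 5r/4]^d$. On $[r^2/2, 100r^2]$, I would choose a unit vector $v$ so that the ray $\{u + \lambda v : \lambda \ge 0\}$ heads into the interior of the inner box (for instance $v = -u/\|u\|_\infty$ when $\|u\|_\infty \ge R/2$, and $v = e_1$ otherwise), set $y_1 := u + 10 r \cdot v$, and apply Lemma~\ref{lem:tube} to keep the BM in a tube of radius $r$ around the segment from $B(r^2/2)$ to $y_1$, an event of probability at least $c$. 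For $t \in [30r^2, 100r^2]$, the BM is then at distance at least $cr$ from $\Lambda^+$, which exceeds $t^{2/5} \le C r^{4/5}$ once $r$ is large.

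On the long phase $[100r^2, 3R^2]$, I would pick a target $y^* \in [-R/4, R/4]^d$ with $d(y^*, \Lambda^+) \ge R/10$; such a point exists as long as $r$ is sufficiently small compared to $R$, which is the regime in which the constraints defining $\Omega_2'(x)$ are jointly consistent and which is the regime where Lemma~\ref{lem:Cap-Vol} is meaningfully applied. I would then apply Lemma~\ref{lem:tube} twice: first on $[100r^2, R^2/2]$ with $y_{\mathrm{tube}} = y^*$ and a tube radius of order $R/100$ to move the BM from near $y_1$ to $y^*$ along a cylinder routed radially along the ray $\{u + \lambda v\}$ so that its distance to $\Lambda^+$ grows linearly and always exceeds $t^{2/5}$; and then on $[R^2/2, 3R^2]$ with $x_{\mathrm{tube}} = y_{\mathrm{tube}} = y^*$ and the same tube radius to keep the BM near $y^*$. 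On this event $B[3R^2/4, 3R^2] \subseteq y^* + [-R/100, R/100]^d \subseteq [-10R/12, 10R/12]^d$, and $d(B(t), \Lambda^+) \ge R/10 - R/100 \gg t^{2/5}$ on $[R^2/2, 3R^2]$; the containment $B[r^2/2, 3R^2] \subseteq [-7R/3, 7R/3]^d$ is immediate from the geometry of the three cylinders.

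The main obstacle is arranging the travel cylinder on $[100r^2, R^2/2]$ so that it simultaneously fits inside $[-7R/3, 7R/3]^d$ and stays at distance $> t^{2/5}$ from $\Lambda^+$. This is handled by routing the cylinder along the ray $\{u + \lambda v : \lambda \ge 0\}$, which makes the distance of its axis to $\Lambda^+$ grow linearly from $\sim r$ at $y_1$ to $\sim R$ at $y^*$, always dominating the margin $t^{2/5} = O(R^{4/5})$. Multiplying the three positive lower bounds obtained from the Markov decomposition gives $\mathbb{P}(\Omega_2'(x)) \ge c > 0$, as claimed.
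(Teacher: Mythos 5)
There are two genuine gaps in your argument.

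\textbf{The transit cylinder is too thick.} On the long phase $[100r^2, R^2/2]$ you apply Lemma~\ref{lem:tube}, which forces the tube radius to be of order $\sqrt{R^2}/M \approx R/M$. The axis of this tube starts at $y_1$, which is at distance of order $r$ from $\Lambda^+$. A point of the tube near that end can therefore be at distance as small as $Cr - R/M$ from $\Lambda^+$, which is \emph{negative} whenever $r \ll R$ (recall $r$ can be as small as $R^\delta$). In other words the tube overlaps $\Lambda^+$, so containment in the tube tells you nothing about $d(B(t), \Lambda^+)$ near the beginning of the transit. Your sentence ``the distance of its axis to $\Lambda^+$ grows linearly \ldots always dominating $t^{2/5}$'' controls only the axis, not the sample path, and that is precisely the gap. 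You cannot fix this with a chain of tubes of growing radius either: that gives a probability of order $(R/r)^{-c}$, which vanishes as $r/R \to 0$. What is needed here is an \emph{avoidance} estimate — that Brownian motion in $d\ge 3$, once at distance $\ge Mr$ from $x$, never returns within $\sqrt{M}r$ of $x$ (Lemma~\ref{lem:Rr}) and never returns within $2t^{2/5}$ (Lemma~\ref{l:BM.ball.exit}), both with probability close to $1$ — combined by a union bound with the constant-probability tube events. That is exactly the role of $\Sigma_5$, $\Sigma_6$ in the paper's proof, and your write-up has no substitute for them.

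\textbf{The case $r \approx R$ is not vacuous.} You wave it away by asserting this is ``the regime in which the constraints defining $\Omega_2'(x)$ are jointly consistent''; but that claim is wrong. The event $\Omega_2'(x)$ is well-posed and has positive probability for all $r\in[R^\delta,R]$, and Lemma~\ref{lem:Cap-Vol} is applied at all such scales. When $r\in[R/3,R]$ one has $30r^2 > 3R^2$ so the distance-to-$\Lambda^+$ clause becomes vacuous and the estimate is easy, and when $r\in[R/M,R/3]$ there is still a valid target $y^*$ at distance $\approx R$ from $\Lambda^+$ and tubes of radius $\approx R$ do work; the paper treats both of these explicitly as separate cases. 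Your proof as written simply does not cover $r$ comparable to $R$, and the lemma is used in that range.
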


Recall that $r\in [R^{\delta },R]$, $u\in [-2R,2R]^d$, $\Lambda =u+[-r,r]^d$ and $\Lambda ^+=u+[-2r,2r]^d$. Recall also that $x\in \Lambda $, that $B$ is Brownian motion starting at $x$ with diffusive constant $\sigma$ and that
\begin{equation*}
        \Omega _2'(x) :=  \bigg\{  \!\! \begin{array}{cc}
        B [0,\frac{1}{2}r^2]\subseteq u+[-\frac{5}{4}r,\frac{5}{4}r]^d, \ \forall t\in [30r^2,3R^2], \  d(B(t),\Lambda ^+)> t^{2/5} \\ B[\frac{1}{2}r^2,3R^2]\subseteq [-\frac{7}{3}R,\frac{7}{3}R]^d, \  B[\frac{3}{4}R^2,3R^2]\subseteq [-\frac{10}{12}R,\frac{10}{12}R]^d 
    \end{array} \!\! \bigg\}.
    \end{equation*}

\begin{proof}
Suppose first that $r\le R/M$ for a sufficiently large constant $M$. Fix a point $y\in [-2R,2R]^d$ with  $\|x-y\|_2=Mr$ and a point $z\in [-\frac{1}{2}R,\frac{1}{2}R]^d$ with  $\|x-z\|_2\ge R/4$. Define the events 
\begin{equation*}
\begin{split}
    &\Sigma _1:= \{B[0,\tfrac{1}{2}r^2]\rightsquigarrow {\rm cyl}(x,u,r/4)\},\quad \Sigma _2:=\{B[\tfrac{1}{2}r^2,20r^2]\rightsquigarrow {\rm cyl}(B(\tfrac{1}{2}r^2),y,r)\}\\
    &\Sigma _3:=\{B[20r^2, \tfrac{1}{2}R^2]\rightsquigarrow {\rm cyl}(B(20r^2),z,\tfrac{1}{8}R)\},\quad \Sigma _4:=\{B[\tfrac{1}{2}R^2,3R^2]\rightsquigarrow {\rm cyl}(B(\tfrac{1}{2}R^2),z,\tfrac{1}{8}R)\}\\
    &\Sigma _5:=\{\forall t\ge 20 r^2, \ \|B(t)-x\|\ge \sqrt{M}r \},\quad \Sigma _6:=\{\forall t\ge 20r^2,\ \|B(t)-x\|\ge 2t^{2/5}\}.
\end{split}
\end{equation*}
On $\Sigma _5\cap \Sigma _6$, for all $t\ge 20r^2$ we have 
\[
d(B(t),\Lambda^+)\ge \|B(t)-x\| -Cr\stackrel{\smash{\mathclap{\Sigma_5, \Sigma _6}}}{\ge} \max \big( \sqrt{M}r,2t^{2/5} \big) -Cr >t^{2/5}  .
\]
Thus, it is easy to check that $\Sigma _1\cap \Sigma _2\cap \Sigma _3\cap \Sigma _4\cap \Sigma _5\cap \Sigma _6\subseteq \Omega _2'(x)$ for $M$ sufficiently large. We turn to bound the probability of this intersection. By Lemma~\ref{lem:tube} we have that $\mathbb P (\Sigma _1\cap \Sigma _2)\ge c_M$ and $\mathbb P (\Sigma _3\cap \Sigma _4\mid \Sigma _1\cap \Sigma _2)\ge c$ where $c$ is independent of $M$. By Lemma~\ref{lem:Rr} (with $R_{\textrm{Lemma \ref{lem:Rr}}}=\infty $ and $r_{\textrm{Lemma \ref{lem:Rr}}}=\sqrt{M}r$) we have that $\mathbb P (\Sigma _5 \mid \Sigma _1\cap \Sigma _2) \ge 1-CM^{1-d/2}$. Finally, letting $k_0:=\lfloor \log _2(20r^2) \rfloor $ we have by Lemma~\ref{l:BM.ball.exit}
\begin{equation*}
    \mathbb P (\Sigma _6^c \mid \Sigma _1\cap \Sigma _2) \le \sum _{k=k_0}^{\infty } \mathbb P \Big( \min _{s\ge 2^{k-1}}\|B(s)-x\|_2 \le 2^{2k/5+1} \Big) \le \sum _{k=k_0}^{\infty } Ce^{-ck}  \le Cr^{-c}.
\end{equation*}
This shows that $\mathbb P (\Sigma _3\cap \Sigma _4\cap \Sigma _5\cap \Sigma _6 \mid \Sigma _1\cap \Sigma _2)\ge c$ as long as $M$ is sufficiently large. It follows that $\mathbb P (\Omega _2'(x))\ge c_M$.

Next, suppose that $r\in [R/M,R/3]$. In this case, we can fix $y\in [-\frac{9}{12}R,\frac{9}{12}R]^d$ such that $d(y,\Lambda ^+)\ge R/24$ and define the events
\begin{equation*}
    \Sigma _2':=\{B[\tfrac{1}{2}r^2,\tfrac{3}{4}r^2]\rightsquigarrow {\rm cyl}(B(\tfrac{1}{2}r^2),y,\tfrac{1}{100}r)\},\quad \Sigma _3':=\{B[\tfrac{3}{4}r^2, 3R^2]\rightsquigarrow {\rm cyl}(B(\tfrac{3}{4}r^2),y,\tfrac{1}{100}R)\}.
\end{equation*}
On $\Sigma _2'\cap \Sigma _3'$ we have that $d(B(t),\Lambda ^+)\ge R/50\ge t^{2/5}$ for all $t\in [30r^2,3R^2]$ for $R$ sufficiently large and therefore $\Sigma _1\cap \Sigma _2'\cap \Sigma _3'\subseteq \Omega _2'(x)$. This shows that $\mathbb P (\Omega _2'(x))\ge c_M$ using Lemma~\ref{lem:tube}.

Finally, when $r\in [R/3,R]$ we do not necessarily have a $y\in [-\frac{9}{12}R,\frac{9}{12}R]^d$ with $d(y,\Lambda ^+)\ge R/24$. However, in this case we have $30r^2>3R^2$ and therefore the second condition in the definition of $\Omega _2'(x)$ is empty. Hence, in this case we can define the events $\Sigma _2'$ and $\Sigma _3'$ as above with $y=0$ and still have $\Sigma _1\cap \Sigma _2'\cap \Sigma _3'\subseteq \Omega _2'(x)$, completing the proof.
\end{proof}

\begin{lem}\label{lem:Brownian2}
If $R$ is sufficiently large then $\mathbb P(\Omega_j'')>c$ where $\Omega_j''$ is from \eqref{eq:Brownian2}.
\end{lem}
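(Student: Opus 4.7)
The plan is to mimic the approach of Lemma~\ref{lem:Om2p}: I would force $B$ through a short sequence of cylinders via Lemma~\ref{lem:tube} to handle the box-containment conditions, and then show the block-avoidance condition holds with high probability via a return-time argument using Lemma~\ref{l:BM.ball.exit}. The two are combined by the trivial inequality $\mathbb P(A\cap B)\ge \mathbb P(A)-\mathbb P(B^c)$.

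Let $i_0$ be the unique index with $x:=\W(\zeta_j)\in \Lambda_{i_0}^+$ (uniqueness follows from the separation $d(\Lambda_i,\Lambda_{i'})\ge (\log^2 T)r\gg r$). Since $\|x\|_\infty\le \tfrac{3}{2}R+4r$, I would pick $y_0$ at the center of $(x+[-0.9R,0.9R]^d)\cap[-0.9R,0.9R]^d$; one checks this intersection has inradius $\ge cR$ for large $R$. I would then build three cylinder events using Lemma~\ref{lem:tube}: the event $\Sigma_1$ that sends $B$ during $[0,r^2/4]$ along ${\rm cyl}(x,y_1,r)$, where $y_1$ lies on the segment $[x,y_0]$ at distance $Mr$ from $x$ (for a large constant $M$); the event $\Sigma_2$ that travels during $[r^2/4,R^2/2]$ along ${\rm cyl}(y_1,y_0,R/16)$; and the event $\Sigma_3$ that keeps $B$ inside ${\rm cyl}(y_0,y_0,R/16)$ during $[R^2/2,2R^2]$. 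Each has probability $\ge c_M$, so by the Markov property $\mathbb P(\Sigma_1\cap\Sigma_2\cap\Sigma_3)\ge c_M^3$. On this intersection both box-containment conditions of $\Omega_j''$ hold, because the tube of radius $R/16$, inflated by the Wiener sausage radius $t^{2/5}\le 2R^{4/5}\ll R/100$, remains inside $x+[-R,R]^d$ for $t\in[r^2,2R^2]$ and inside $[-R,R]^d$ for $t\in[R^2,2R^2]$, by the choice of $y_0$.

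For the block-avoidance condition I would apply Lemma~\ref{l:BM.ball.exit} to the unconditioned Brownian motion. For each block $\Lambda_i^+$ with $z_i\in \Lambda_i^+$, partition $[r^2,2R^2]$ into dyadic intervals $[2^\ell,2^{\ell+1}]$; the event $\{\exists\,t\in[2^\ell,2^{\ell+1}],\,d(B(t),\Lambda_i^+)\le t^{2/5}\}$ is contained in $\{\inf_{t\ge 2^\ell}\|B(t)-z_i\|_2\le 2^{2(\ell+1)/5}+\sqrt d\,r\}$, and Lemma~\ref{l:BM.ball.exit} (with $s^2=2^\ell$) bounds its probability by $C\cdot 2^{-\ell(d-2)/10}$. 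Summing from $\ell\ge 2\log_2 r$ gives $Cr^{-(d-2)/5}$ per block, and summing over $i\le k=\lceil\log T\rceil$ yields a total bad probability of at most $C(\log T)R^{-(d-2)\delta/5}$, which tends to $0$ since $T\ge a^{-1/(2d)}$ is sufficiently large and $d\ge 6$. Combining, $\mathbb P(\Omega_j'')\ge c_M^3-o(1)>c/2$ for all $R$ large enough.

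The main obstacle is that the blocks are separated only by $(\log^2 T)r$, which is much smaller than the maximal sausage radius $R^{4/5}$, so the $R^{4/5}$-inflations of different blocks may overlap and cover most directions from $y_1$; a purely geometric tube-routing strategy around all the blocks is therefore infeasible. The escape lies in decoupling: the tubes handle only the global geometry of $B$, while the capacity/return estimate handles block avoidance on the unconditioned process. In dimension $d\ge 6$ the polynomial decay $r^{-4/5}$ from Lemma~\ref{l:BM.ball.exit} comfortably beats the $\log T$ factor coming from the number of blocks, which is precisely why the argument closes.
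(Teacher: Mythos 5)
Your tube strategy for the two box-containment conditions is essentially the same as the paper's and is fine. The gap is in the block-avoidance part, and it is a real one.

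You apply Lemma~\ref{l:BM.ball.exit} to the \emph{unconditioned} Brownian motion and claim a per-block bound of order $r^{-(d-2)/5}$, obtained by summing $C\cdot 2^{-\ell(d-2)/10}$ over $\ell\ge 2\log_2 r$. But that estimate uses only the sausage inflation radius $2^{2(\ell+1)/5}$ and drops the additive term $\sqrt d\,r$ coming from the block size. For $\ell$ near $2\log_2 r$ (i.e.\ $t$ near $r^2$) the target radius in Lemma~\ref{l:BM.ball.exit} is $\approx r$, not $\approx r^{4/5}$, and with $s=2^{\ell/2}\approx r$ the bound $(r/s)^{d-2}$ is a constant of order $1$, not $2^{-\ell(d-2)/10}$. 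So the dyadic sum does not give $Cr^{-(d-2)/5}$; it gives $O(1)$, which is useless after subtracting from the tube probability $c_M^3$.

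Worse, there is an intrinsic obstruction for the block $\Lambda_{i_0}^+$ that contains $x$. Since $\|x-u_{i_0}\|\le Cr$, the unconditioned probability that $B$ returns within distance $\approx r$ of $u_{i_0}$ at some time $\ge r^2$ is a fixed constant bounded away from $0$ (by, e.g., Lemma~\ref{lem:Rr}: starting from distance of order $r$, the return probability to a ball of radius comparable to $r$ is a constant). No unconditional estimate can make $\mathbb P(B^c)$ small here, so the inequality $\mathbb P(A\cap B)\ge\mathbb P(A)-\mathbb P(B^c)$ cannot close. This is exactly why the paper conditions its avoidance events $\Sigma_4^j,\Sigma_5^j$ on the first tube event $\Sigma_1$: on $\Sigma_1$, the walk has already moved to distance $\approx Mr$ from $u_{i_0}$, so Lemma~\ref{lem:Rr} gives $\mathbb P(\Sigma_4^{i_0}\mid\Sigma_1)\ge 1-CM^{1-d/2}$, which can be driven above $1/2$ by choosing $M$ large --- while $c_M$ remains a fixed positive constant for that $M$. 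Your decoupling ``tube event on the one hand, unconditional avoidance on the other'' simply loses this leverage. To fix the proof you would need to (a) include a genuine hard-ball avoidance at scale $\approx r$ (the analogue of the paper's $\Sigma_4^j$, with the $\Sigma_5^j$-type sausage estimate on top), and (b) bound these avoidance events \emph{conditionally on the first tube step}, at least for the block containing $x$, which effectively reproduces the paper's argument.
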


Recall that
\begin{equation*}
    \Omega _j '':= \Big\{  B[r^2, 2R^2 ]^+ \subseteq  (x+[-R,R]^d) \setminus  \smash{\bigcup_{i\le k}}\Lambda _i^+  \Big\} \cap \big\{  B[R^2, 2R^2 ]^+ \subseteq [-R,R]^d  \big\}.
\end{equation*}
Here $r=\lceil R^{\delta }\rceil $, $k=\lceil \log T \rceil $, and $\Lambda_i^+=u_i+[-4r,4r]^d$ where $u_i\in[-\frac32 R,\frac32 R]^d$ and the $u_i$ are separated by $(\log ^2T)r$. $B$ is Brownian motion starting from some $x$ with $\|x-u_i\|\le Cr$ for some $i\le k$. Finally, recall that $B[a,b]^+$ is the inflating Wiener sausage and is given by $\big\{ y\in \mathbb R^d : \exists t\in [a,b] , \ \|y-B(t)\|\le t^{2/5} \big\}$.  

\begin{proof}
Let $u_1,\dots ,u_k$ be the centers of $\Lambda _1,\dots ,\Lambda _k$ respectively. Let $M$ be a sufficiently large constant and let $y\in [-2R,2R]^d$ be a point at distance $Mr$ from $x$. Since $u_i\in [-\frac{3}{2}R,\frac{3}{2}R]^d$, there exists some $z\in [-\frac{7}{8}R,\frac{7}{8}R]^d\cap (x+[-\frac{7}{8}R,\frac{7}{8}R]^d)$. Define the events
\begin{equation*}
\begin{split}
    &\Sigma _1:=\big\{ B[0,r^2]\rightsquigarrow {\rm cyl}(x,y,r) \big\},\quad \Sigma _2:= \big\{ B[r^2,R^2]\rightsquigarrow {\rm cyl}(B(r^2),z,R/20)  \big\}\\
    &\Sigma _3:= \big\{ B[R^2,2R^2]\rightsquigarrow {\rm cyl}(B(R^2),z,R/20)  \big\},\quad \Sigma _4^j:=\big\{ \forall  t\ge r^2,  \|B(t)-u_j \|\ge \sqrt{M}r \big\}\\
    &\Sigma _5^j:= \big\{ \forall  t\ge r^2,  \|B(t)-u_j \|\ge 2t^{2/5} \big\}
\end{split}
\end{equation*}
Clearly, for large $M$ we have that $\Sigma _1\cap \Sigma _2\cap \Sigma _3\cap \bigcap _{j\le k} (\Sigma _4^j\cap \Sigma _5^j)\subseteq \Omega _j''$.
By Lemma~\ref{lem:tube} we have that $\mathbb P (\Sigma _1)\ge c_M$ and $\mathbb P (\Sigma _2\cap \Sigma _3 \mid \Sigma _1)\ge c$. By the same arguments as the above proof we have that $\mathbb P (\Sigma _4^i\mid \Sigma _1)\ge 1-CM^{1-d/2} $ and similarly by Lemma~\ref{lem:Rr}, for $j\neq i$ we have  $\mathbb P (\Sigma _4^j\mid \Sigma _1)\ge 1-C_M(\log T)^{4-2d}$. Moreover, for all $j\le k$ we have that $\mathbb P (\Sigma _5^j \mid \Sigma _1) \ge 1-Cr^{-c}$. This shows that $\mathbb P (\Sigma _2\cap \Sigma _3\cap \bigcap _{j\le k} (\Sigma _4^j\cap \Sigma _5^j) \mid \Sigma _1) \ge c$ for large $M$, finishing the proof.
\end{proof}

\bibliography{Interchange}
\bibliographystyle{abbrv}

\end{document}